\documentclass[11pt]{amsart}
\baselineskip=7.0mm
\usepackage{amsmath}
\usepackage{bbding}
\usepackage{tikz}
\usepackage{amsmath,amssymb}
\setlength{\baselineskip}{1.09\baselineskip}

\theoremstyle{plain}
\newtheorem{theorem}{Theorem}[section]
\newtheorem{lemma}[theorem]{Lemma}

\newtheorem{corollary}[theorem]{Corollary}

\newtheorem{assumption}[theorem]{Assumption}

\theoremstyle{definition}

\newtheorem{example}[theorem]{Example}
\numberwithin{equation}{section}
%\numberwithin{equation}{subsection}

\setlength{\textwidth}{5.8in} \setlength{\textheight}{8.0in}
\hoffset=-0.45truein \voffset=0.1truein

\def\be{\begin{equation}}
\def\ee{\end{equation}}

\begin{document}

\title[Blow-up Sets of Ricci Curvatures]
{Blow-up Sets of Ricci Curvatures of\\ Complete Conformal Metrics}

\author[Han]{Qing Han}
\address{Department of Mathematics\\
University of Notre Dame\\
Notre Dame, IN 46556, USA} \email{qhan@nd.edu}
\author[Shen]{Weiming Shen}
\address{School of Mathematical Sciences\\
Capital Normal University\\
Beijing, 100048, China}
\email{wmshen@aliyun.com}
\author[Wang]{Yue Wang}
\address{School of Mathematical Sciences\\
Capital Normal University\\
Beijing, 100048, China}
\email{yuewang37@aliyun.com}

\begin{abstract}
A version of the singular Yamabe problem in smooth domains in a closed manifold
yields complete conformal metrics with negative constant scalar curvatures.
In this paper, we study the blow-up phenomena of Ricci curvatures of these metrics
on domains whose boundary is close to a certain limit set of a lower dimension.
 We will characterize the blow-up set according to the Yamabe invariant of the underlying manifold.
In particular, we will prove that all points in the lower dimension part of the limit set belong to the blow-up set
on manifolds not conformally equivalent to the standard sphere
and that all but one point in the lower dimension part of the limit set belong to the blow-up set
on manifolds conformally equivalent to the standard sphere.
In certain cases, the blow-up set can be the entire manifold.
We will demonstrate by examples that these results are optimal.
\end{abstract}

\thanks{The first author acknowledges the support of NSF Grant DMS-2305038. The second author acknowledges the support of NSFC Grant 12371208 and NSFC Grant 11901405. The third author acknowledges the support of NSFC Grant 12371236 and NSFC Grant 12001383.
%The second author acknowledges the support of NSFC Grant 11901405.
%The third author acknowledges the support of the Beijing Advanced Innovation Center for Imaging Theory
%and Technology and the key
%research project of the Academy for Multidisciplinary Studies, Capital Normal University.
}
%\date{\today}
\maketitle

\section{Introduction}\label{sec-Intro}

A version of the singular Yamabe problem in bounded domains in the compact manifold can be formulated as follows.
Given a compact Riemannian manifold $(M,g)$ of dimension $n\ge 3$ without boundary
and a smooth submanifold $\Gamma$ in $M$,
does there exist a complete conformal metric on $M\setminus \Gamma$
with a  {\it negative} constant scalar curvature $-n(n-1)$?
For $(M,g)=(S^n, g_{S^n})$,
Loewner and Nirenberg \cite{Loewner&Nirenberg1974} proved that there exists a
complete conformal metric on $S^n\setminus \Gamma$  with the
constant scalar curvature $-n(n-1)$
if and only if dim$(\Gamma)>(n-2)/2$.
Aviles and McOwen \cite{AM1988DUKE} proved a similar result for the general compact manifold $(M,g)$.
As a consequence, we can take the dimension of the submanifold to be $n-1$
and conclude the following result. In any compact Riemannian manifold with a smooth boundary,
there exists a complete conformal metric with a negative constant scalar curvature $-n(n-1)$.
 Such a metric is referred to as a {\it singular Yamabe metric} in this paper.

 The singular Yamabe metrics in smooth domains can be viewed as generalizations
of the Poincar\'e metric in the unit ball in Euclidean space,
and have sectional curvatures asymptotically equal to $-1$ near the boundary.
It is natural to investigate whether they have negative sectional curvatures or negative Ricci curvatures in the entire domain,
or whether their Ricci curvatures can be controlled in a uniform way.
Conditions on a bound or a lower bound of Ricci curvatures
are widely used in geometry and analysis on manifolds.
For example, they play an important role in Yau's
gradient estimate for harmonic function \cite{SchoenYau1994}, Li-Yau's heat kernel estimates \cite{SchoenYau1994},
Bishop-Gromov's volume growth estimates \cite{Petersen2006},
Gromov's precompactness theorem for a family of manifolds \cite{Petersen2006}, Cheeger-Colding theory\cite{CheegerColding1997},
and Cheeger-Colding-Tian \cite{CheegerColdingTian2002}, to name a few.

There are many works concerning the singular Yamabe metrics.
Andersson, Chru\'sciel, and Friedrich \cite{ACF1982CMP} and Mazzeo \cite{Mazzeo1991}
established polyhomogeneous expansions for conformal factors of the singular Yamabe metrics.
Graham \cite{Graham2017} studied volume renormalizations for singular
Yamabe metrics and characterized the coefficient of the first logarithmic terms in the expansions
for conformal factors of the singular Yamabe metrics by a variation of the coefficients of the
first logarithmic terms in the volume expansions.
Gursky and the first author \cite{GurskyHan2017}, and Chen, Lai, and Wang \cite{ChenLaiWang2019}
studied Escobar's Yamabe compactifications for Poincar\'e-Einstein manifolds
by using the polyhomogeneous expansions of the singular Yamabe metrics.
In \cite{ShenWang2021}, the second and third authors obtained rigidity and gap results for boundary integrals of the global coefficient when $n=2$.
Recently, Li \cite{Li2022} recovered the existence of the singular Yamabe metric on compact manifolds with boundary through flow method. Chang, McKeown, and Yang \cite{CMY2022} studied the scattering problem for the singular Yamabe metrics and applied the results to the study of the conformal geometry of compact manifolds with boundary.

In \cite{HanShen3}, the first two authors studied the negativity of Ricci curvatures
of the singular Yamabe metrics and investigated whether these metrics
have negative sectional curvatures
or negative Ricci curvatures.
They proved that these metrics indeed have negative Ricci curvatures in bounded convex domains in the
Euclidean space.
They also provided general constructions of domains in compact manifolds and demonstrated that
the negativity of Ricci curvatures does not hold if the boundary is close to certain sets
of low dimensions. For singular Yamabe metrics, Ricci curvatures are
uniformly bounded from below if and only if they
are uniformly bounded since the scalar curvature
of these complete conformal metrics is assumed to be a fixed negative constant. %$-n(n-1)$.

In this paper, we study the blow-up phenomena of Ricci curvatures of the singular Yamabe metrics. In particular, we characterize the blow-up sets and use the blow-up sets to obtain some information of the conformal geometry of underlying manifold.
Our setting is similar to that in \cite{HanShen3} and will appear repeatedly.
It is convenient to formulate the following assumption.

\begin{assumption}\label{assumption-basic}
Let $(M,g)$ be a compact Riemannian manifold of dimension $n\geq 3$ without boundary and
$\Gamma\subset M$ be a nonempty compact set.
Suppose that $\Omega_i$ is a sequence of increasing smooth domains in $M\setminus\Gamma$,
which converges to $M \setminus\Gamma$, and that $g_i$ is the complete conformal metric in $\Omega_i$
with a constant scalar curvature $-n(n-1)$ as
in \cite{AM1988DUKE}.
\end{assumption}

\begin{figure}
\centering
\includegraphics[scale=0.15]{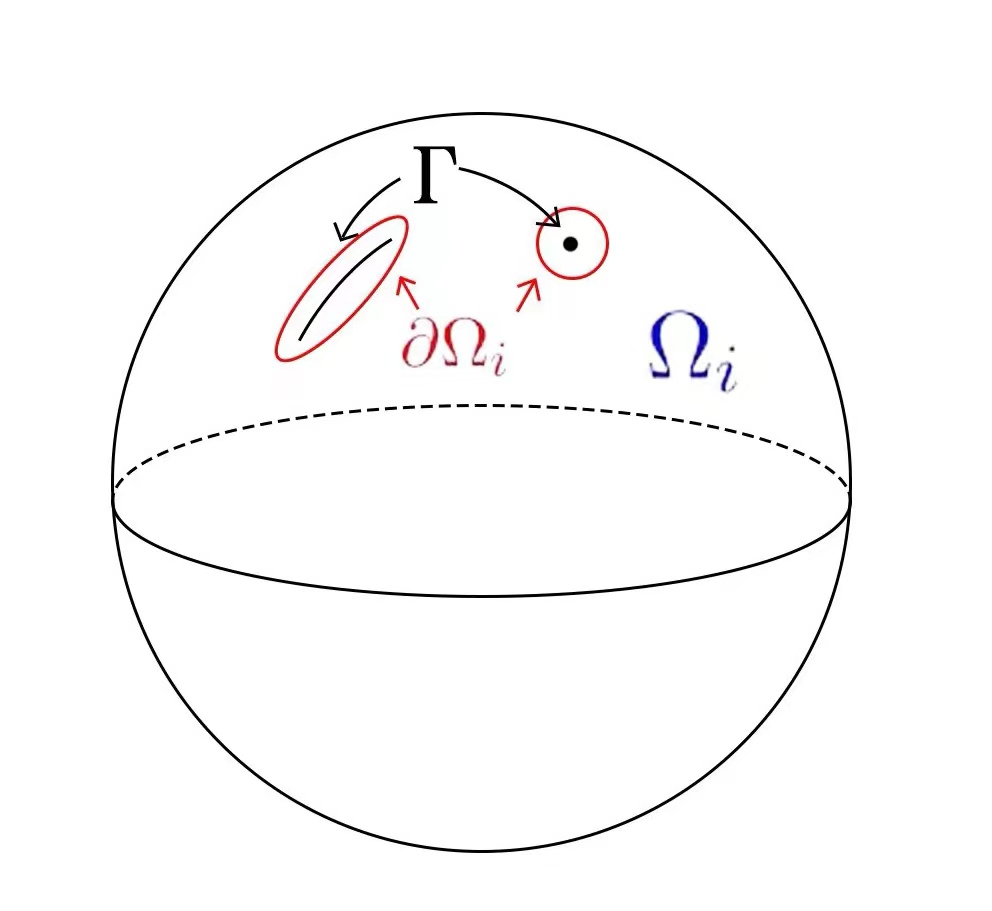}
\caption{$\Omega_i\rightarrow M\setminus\Gamma$}
\end{figure}

By the convergence of $\Omega_i$  to $M \setminus\Gamma$, we mean $\cup_{i=1}^\infty \Omega_i
=M\setminus \Gamma$ and, for any $\varepsilon>0$,
$\partial\Omega_i$ is in the $\varepsilon$-neighborhood of $\Gamma$ for all large $i$.
 By smooth domains, we always mean domains with smooth boundaries.
In \cite{HanShen3}, the first two authors proved that the maximal Ricci curvature of $g_i$ in $\Omega_{i}$
diverges to $\infty$ as $i\to\infty$, under appropriate assumptions on $(M,g)$ and $\Gamma$.
In particular, $\Gamma$ is required to be a disjoint union of finitely many closed smooth embedded submanifolds
in $M$ of varying dimensions,
between $0$ and $(n-2)/2$.

In the following, we refer to the compact set $\Gamma$ as the {\it limit set}.
With $\Gamma$, $\{\Omega_i\}$, and $\{g_i\}$ as in Assumption \ref{assumption-basic},
we define the associated {\it blow-up set} by
\begin{align*}
\mathcal{B}_{\Gamma,\{\Omega_i\}}
=\big\{x\in M;&\,\lim_{m\to\infty}\sup_{i}|Ric_{g_i}(x_m)|=\infty\\
&\text{ for some sequence }\{x_m\}\subset M\setminus\Gamma\text{ with }x_m \rightarrow x\big\}.\end{align*}
In other words, the blow-up set $\mathcal{B}_{\Gamma,\{\Omega_i\}}$
consists of the limits of sequences of points where the maximal Ricci components of $g_i$
diverges to $\infty$.
 It is obvious that the blow-up set $\mathcal{B}_{\Gamma,\{\Omega_i\}}$ is a closed set of $M$.
As we will see, the blow-up set $\mathcal{B}_{\Gamma,\{\Omega_i\}}$
depends on the limit set $\Gamma$ as well as the exhausting domains $\{\Omega_i\}$.
For a fixed compact set $\Gamma$ in $(S^n, g_{S^n})$, different sets of exhausting domains may yield different blow-up sets.

 In this paper, we aim to characterize the blow-up set $\mathcal{B}_{\Gamma,\{\Omega_i\}}$.
%and meanwhile relax requirements on $\Gamma$.
First, we assume that the limit $\Gamma$ consists of three components as follows:
\begin{align}\label{eq-def-Gamma1-2}\begin{split}
&\text{$\Gamma_{1}$ is a compact subset in $M$ of Hausdorff dimension less than $(n-2)/2$},\\
&\text{$\Gamma_{2}$ is a closed subset in
the union of countably many closed
$C^{1}$-embedded}\\
&\qquad\text{submanifolds in $M$ of dimension $(n-2)/2$},
\end{split}\end{align}
and
\begin{align}\label{eq-def-Gamma3}\begin{split}
&\text{$\Gamma_{3}$ is a disjoint union of finitely many closed smooth embedded}\\
&\qquad
\text{ submanifolds in $M$ of varying dimensions greater than $(n-2)/2$}.
\end{split}\end{align}
We point out that the lower dimensional component $\Gamma_1$ is allowed to have non-integer dimensions.
Even if it has an integer dimension, it is not necessarily a smooth submanifold.
We also point out that $\Gamma$ is allowed have a higher dimensional component $\Gamma_3$.
We emphasize that $\Gamma_2$ is assumed to be a closed set itself.

 We will prove the following two results. The higher dimensional part $\Gamma_3$ of the limit set $\Gamma$
is present in the first result and is absent in the second result.

\begin{theorem}\label{blow up set-nonempty-high}
Let $(M, g)$, $\Gamma$, and $\{\Omega_i\}$ be as in Assumption \ref{assumption-basic},
and $\Gamma=\Gamma_{1}\bigcup\Gamma_{2}\bigcup\Gamma_{3}$,
for $\Gamma_1, \Gamma_2$, and $\Gamma_3$ as in \eqref{eq-def-Gamma1-2} and \eqref{eq-def-Gamma3},
with $(\Gamma_{1}\bigcup\Gamma_{2})\bigcap\Gamma_3=\emptyset$,
$\Gamma_{1}\bigcup\Gamma_2\neq\emptyset$, and $\Gamma_{3}\neq\emptyset$.
Then, $\Gamma_{1}\bigcup\Gamma_{2}\subseteq \mathcal{B}_{\Gamma,\{\Omega_i\}}\subseteq \Gamma$.
\end{theorem}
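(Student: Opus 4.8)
The plan is to establish the two inclusions separately, after recording the following reformulation. For $y\in M\setminus\Gamma$ set $f(y)=\sup_i|Ric_{g_i}(y)|$, where the supremum is over those $i$ with $y\in\Omega_i$; this is well defined since $y\in\Omega_i$ for all large $i$. Directly from the definition, $x\in\mathcal{B}_{\Gamma,\{\Omega_i\}}$ if and only if $f$ is unbounded on $V\setminus\Gamma$ for every neighborhood $V$ of $x$; equivalently, $x\notin\mathcal{B}_{\Gamma,\{\Omega_i\}}$ if and only if there exist a neighborhood $U$ of $x$ and a constant $C$ with $\sup_i\sup_{U\cap\Omega_i}|Ric_{g_i}|\le C$. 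Thus the outer inclusion $\mathcal{B}_{\Gamma,\{\Omega_i\}}\subseteq\Gamma$ amounts to a uniform local Ricci bound away from $\Gamma$, while the inner inclusion $\Gamma_1\cup\Gamma_2\subseteq\mathcal{B}_{\Gamma,\{\Omega_i\}}$ amounts to the failure of any such bound near a point of $\Gamma_1\cup\Gamma_2$.

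For the outer inclusion, fix $x\notin\Gamma$ and choose $r>0$ with $\overline{B(x,2r)}\subset M\setminus\Gamma$. Since the $\Omega_i$ increase to $M\setminus\Gamma$ and $\partial\Omega_i$ eventually lies in an arbitrarily small neighborhood of $\Gamma$, there is $i_0$ so that $B(x,r)\subset\Omega_i$ and $\mathrm{dist}_g\big(B(x,r),\partial\Omega_i\big)\ge r$ for all $i\ge i_0$. Writing $g_i=u_i^{4/(n-2)}g$, the monotonicity of the singular Yamabe conformal factor in the domain (a larger domain yields a smaller factor, by the maximum principle) gives $u_i\le u_{i_0}$ on $\Omega_{i_0}$ for $i\ge i_0$, hence a uniform upper bound for $u_i$ on $B(x,r)$; a matching positive lower bound follows from Harnack together with the fixed scalar curvature. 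Interior elliptic estimates for the Yamabe equation then bound $u_i$ in $C^{2}(B(x,r/2))$ uniformly in $i$, and therefore $|Ric_{g_i}|\le C(x,r)$ on $B(x,r/2)$. By the reformulation, $x\notin\mathcal{B}_{\Gamma,\{\Omega_i\}}$, which proves $\mathcal{B}_{\Gamma,\{\Omega_i\}}\subseteq\Gamma$.

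For the inner inclusion I argue by contradiction. Fix $x\in\Gamma_1\cup\Gamma_2$. Since $\Gamma_3$ is closed and disjoint from $\Gamma_1\cup\Gamma_2$, there is a neighborhood $U$ of $x$ with $\overline{U}\cap\Gamma_3=\emptyset$, so that $\Gamma\cap U\subseteq\Gamma_1\cup\Gamma_2$ has Hausdorff dimension at most $(n-2)/2$ by \eqref{eq-def-Gamma1-2}. Suppose, for contradiction, that $x\notin\mathcal{B}_{\Gamma,\{\Omega_i\}}$, i.e. $\sup_i\sup_{U\cap\Omega_i}|Ric_{g_i}|\le C$. Because the scalar curvature is the fixed constant $-n(n-1)$, this bounds the full curvature tensor of $g_i$ on $U\cap\Omega_i$ uniformly, so the $g_i$ have uniformly bounded geometry there while remaining complete toward $\partial\Omega_i$, and $\partial\Omega_i\to\Gamma\cap U$. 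The idea is to convert this bounded-geometry picture, using the techniques of \cite{HanShen3}, into a complete conformal metric of scalar curvature $-n(n-1)$ on a punctured model, which is impossible: by \cite{Loewner&Nirenberg1974} and \cite{AM1988DUKE} no complete conformal metric of scalar curvature $-n(n-1)$ exists on the complement of a set of Hausdorff dimension at most $(n-2)/2$. Concretely, I would select base points $p_i\in\Omega_i$ with $\mathrm{dist}_g(p_i,\Gamma)\to0$ approaching $x$ and extract a pointed limit of $(\Omega_i,g_i,p_i)$; the uniform curvature bound and the collapse of $\partial\Omega_i$ onto the high-codimension set $\Gamma\cap U$ force the limit to be a complete conformal metric on the complement of a set of dimension at most $(n-2)/2$, giving the desired contradiction. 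Hence no such $U$ and $C$ exist, and $x\in\mathcal{B}_{\Gamma,\{\Omega_i\}}$.

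The main obstacle is the limiting step in the inner inclusion, and it is genuinely subtle because the naive (monotone, fixed-coordinate) limit $u_\infty=\lim_i u_i$ is the removable-singularity extension across $\Gamma\cap U$ and is therefore \emph{not} complete; a uniform Ricci bound is fully consistent with such a limit. Thus bounded curvature cannot be contradicted by the monotone limit, and one must instead follow the collapsing boundary layer, rescaling and basing the pointed limit at points $p_i\to x$ adapted to the shrinking tube around $\Gamma$, so that completeness survives in the limit. Carrying this out rigorously is exactly the content localized from \cite{HanShen3}, where the global statement $\max_{\Omega_i}|Ric_{g_i}|\to\infty$ is proved; the present theorem requires tracking the blow-up to each individual point of $\Gamma_1\cup\Gamma_2$ and handling the two new features of the hypothesis, namely the possibly non-integer Hausdorff dimension and non-smoothness of $\Gamma_1$ (for which only the dimension-based nonexistence of \cite{AM1988DUKE} is available) and the borderline critical dimension $(n-2)/2$ of $\Gamma_2$, where the nonexistence is sharp and the relevant expansions acquire logarithmic terms.
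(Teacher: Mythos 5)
Your outer inclusion $\mathcal{B}_{\Gamma,\{\Omega_i\}}\subseteq\Gamma$ has a decisive gap: the uniform positive lower bound for $u_i$ on compact subsets of $M\setminus\Gamma$ does not ``follow from Harnack together with the fixed scalar curvature.'' The Harnack inequality only relates $\sup$ and $\inf$ of $u_i$ on a fixed compact set, and both may tend to $0$; indeed, when $\Gamma_3=\emptyset$ and $\lambda(M,[g])\geq 0$ one has $u_i\to 0$ in $C^{m}_{\mathrm{loc}}(M\setminus\Gamma)$ (Lemma \ref{lemme-convergence1}), and the inclusion $\mathcal{B}_{\Gamma,\{\Omega_i\}}\subseteq\Gamma$ actually \emph{fails} in general (Theorem \ref{blow up set-empty-high}(2),(3)). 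Since your argument for the outer inclusion never uses $\Gamma_3\neq\emptyset$, it cannot be correct as written. The hypothesis $\Gamma_3\neq\emptyset$ enters precisely here: by the maximum principle, $u_i\geq \underline{u}$ on $\Omega_i$, where $\underline{u}>0$ is the singular Yamabe solution on $M\setminus\Gamma_3$, and this is what yields $u_i\to u$ with $u$ smooth and \emph{positive} on all of $M\setminus\Gamma_3$ (Lemma \ref{lemme-convergence2}). Because $Ric_{g_i}$ is expressed through $v_i=u_i^{-2/(n-2)}$ and its derivatives via \eqref{Ricci-cur-in-v-Manifold}, this lower bound is the heart of the outer inclusion, not a routine step.

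Your inner inclusion is also not a proof: it is a contradiction scheme whose key step you explicitly leave open, and two of its assertions break down. First, bounded Ricci curvature together with constant scalar curvature does \emph{not} bound the full curvature tensor; the Weyl part is unconstrained, and for conformal metrics $|W_{g_i}|_{g_i}=v_i^2|W_g|_g$, so controlling it again requires the lower bound on $u_i$ discussed above. Second, the passage from ``bounded geometry with collapsing boundary'' to ``a complete conformal metric on the complement of a set of Hausdorff dimension at most $(n-2)/2$'' is exactly the missing content — as you yourself note, the monotone limit is incomplete and perfectly consistent with bounded Ricci — and the rescaled pointed limit you invoke is neither constructed here nor the actual technique of \cite{HanShen3}. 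The paper's argument is direct and elementary (Lemma \ref{lemma-blowup-set1}): since $v_i\to v$ with $v$ positive and smooth across $x_0$ (again by Lemma \ref{lemme-convergence2}, i.e., by $\Gamma_3\neq\emptyset$), along a short geodesic from $x_0$ the function $v_i$ rises from $0$ at $\partial\Omega_i$ to the definite value $v(\epsilon e_n)$ at distance $\epsilon$, while $|\nabla_g v_i|\le 1$ at $\partial\Omega_i$ by the boundary expansion; maximizing $\partial_n v_i$ on the segment produces an interior point where $\partial_n v_i\geq c/\epsilon$ and $\partial_{nn}v_i=0$, and substituting into \eqref{Ricci-cur-in-v-Manifold} gives $R^i_{nn}\leq -C\epsilon^{-2}$ there. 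Letting $\epsilon\to 0$ places $x_0$ in $\mathcal{B}_{\Gamma,\{\Omega_i\}}$ with no compactness argument and no appeal to the nonexistence theorems of \cite{Loewner&Nirenberg1974} or \cite{AM1988DUKE}. I recommend replacing your contradiction scheme by this direct argument, and deriving both inclusions from Lemma \ref{lemme-convergence2}.
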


\begin{theorem}\label{blow up set-empty-high}
Let $(M, g)$, $\Gamma$, and $\{\Omega_i\}$ be as in Assumption \ref{assumption-basic},
$\lambda(M,[g])$ be the Yamabe invariant of $(M, g)$,
and $\Gamma=\Gamma_{1}\bigcup\Gamma_{2}$,
for $\Gamma_1$ and $\Gamma_2$ as in \eqref{eq-def-Gamma1-2}.

$\mathrm{(1)}$ If $\lambda(M,[g])<0$, then $\mathcal{B}_{\Gamma,\{\Omega_i\}}=\Gamma$.

$\mathrm{(2)}$ If $\lambda(M,[g])=0$, then
$$ \mathcal{B}_{\Gamma,\{\Omega_i\}}=\Gamma \bigcup
\text{clos}\{x\in M:\,  Ric_{\bar{g}}(x)\neq0 \text{ for some }\bar{g}\in [g]\text{ with }R_{\bar{g}}=0\},
$$
where $R_{\bar{g}}$ is the scalar curvature of $\bar{g}$.

$\mathrm{(3)}$ If $\lambda(M,[g])>0$ and $(M, g)$ is not conformally equivalent to
$S^{n}$, then $\Gamma\subseteq \mathcal{B}_{\Gamma,\{\Omega_i\}}$.
If, in addition, $M$ is locally conformally flat, then $\mathcal{B}_{\Gamma,\{\Omega_i\}}=M$.

$\mathrm{(4)}$ If $(M, g)$ is conformally equivalent to $S^{n}$,
then either $\Gamma\subseteq \mathcal{B}_{\Gamma,\{\Omega_i\}}$
or $\Gamma\setminus\{p\}\subseteq \mathcal{B}_{\Gamma,\{\Omega_i\}}$
for some isolated point $p\in\Gamma$.
\end{theorem}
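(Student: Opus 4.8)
The plan is to study the conformal factors $u_i>0$ defined by $g_i=u_i^{4/(n-2)}g$, which solve $-\tfrac{4(n-1)}{n-2}\Delta_g u_i+R_g u_i=-n(n-1)u_i^{(n+2)/(n-2)}$ on $\Omega_i$ with $u_i\to\infty$ at $\partial\Omega_i$ by completeness, and to read off the blow-up of $Ric_{g_i}$ from the limiting behavior of $u_i$. The starting observation is that, since $\Gamma=\Gamma_1\cup\Gamma_2$ has Hausdorff dimension at most $(n-2)/2$, it is removable for this equation: on every compact $K\subset M\setminus\Gamma$ the $u_i$ enjoy uniform two-sided barriers, so a subsequence converges in $C^\infty_{\mathrm{loc}}(M\setminus\Gamma)$, and any positive limit yields a conformal metric with scalar curvature $-n(n-1)$ extending weakly across $\Gamma$. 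The solution theory of the Yamabe problem on the closed manifold $M$ then constrains this limit by the sign of $\lambda(M,[g])$, and this is what separates the four cases.

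When $\lambda<0$ there is a smooth $g_Y=u_Y^{4/(n-2)}g$ on all of $M$ with $R_{g_Y}=-n(n-1)$. Monotonicity of the Aviles--McOwen solutions \cite{AM1988DUKE} in the domain, together with $u_Y$ as a global barrier, gives $u_i\downarrow u_Y$ in $C^\infty_{\mathrm{loc}}(M\setminus\Gamma)$, so $Ric_{g_i}$ is locally bounded off $\Gamma$ and $\mathcal{B}\subseteq\Gamma$. For the reverse inclusion I argue locally near $x_0\in\Gamma$: the boundaries $\partial\Omega_i$ accumulate at $x_0$, and completeness forces $u_i$ to pass from the finite value $\approx u_Y$ to $+\infty$ across a thin layer; this is exactly the singular-Yamabe-near-a-low-dimensional-set picture, so the Ricci blow-up of \cite{HanShen3} produces $x_m\to x_0$ with $\sup_i|Ric_{g_i}(x_m)|=\infty$. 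Hence $\mathcal{B}=\Gamma$, proving (1).

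For $\lambda\ge0$ no metric in $[g]$ has scalar curvature $-n(n-1)$, so the limit above cannot be positive and $u_i$ must degenerate. I would normalize $u_i=c_i v_i$ so that $h_i=v_i^{4/(n-2)}g$ converges to a constant-scalar Yamabe metric $g_0$; since $R_{h_i}=-n(n-1)\,c_i^{4/(n-2)}$, convergence to a scalar-flat $g_0$ forces $c_i\to0$, which is available precisely when $\lambda=0$. As Ricci is invariant under constant rescaling while its pointwise $g_i$-norm carries a factor $c_i^{-4/(n-2)}$, one has $|Ric_{g_i}|_{g_i}=c_i^{-4/(n-2)}|Ric_{h_i}|_{h_i}$, which blows up exactly on $\text{clos}\{Ric_{g_0}\neq0\}$ (a set independent of the representative, since the scalar-flat metric is unique up to scale). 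Adding the local blow-up at $\Gamma$ yields the formula in (2). When $\lambda>0$ no scalar-flat limit exists and the degeneration is more violent, but the same local analysis still gives $\Gamma\subseteq\mathcal{B}$, proving the first half of (3). If in addition $M$ is locally conformally flat, the developing map identifies $M$ with a quotient of a domain in $S^n$ whose complementary limit set has dimension $<(n-2)/2$; the problem lifts to the sphere and the deck transformation group spreads the blow-up over all of $M$, giving $\mathcal{B}=M$.

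The sphere case (4) is the main obstacle, and is where the non-compact conformal group enters. The content to be proved is that at most one point of $\Gamma$ can fail to lie in $\mathcal{B}$: if $x_0\notin\mathcal{B}$, then near $x_0$ the metrics $g_i$ converge, after pulling back by a divergent sequence of M\"obius transformations, to a complete hyperbolic model whose Ricci tensor equals $-(n-1)$ times the metric and so has bounded norm; a Pohozaev/Kazdan--Warner type identity on $S^n$ shows that two such simultaneous bounded-curvature ends cannot coexist, so $x_0$ is unique. Moreover, since $\mathcal{B}$ is closed and contains $\Gamma\setminus\{x_0\}$, the exceptional $x_0$ must be isolated in $\Gamma$ (otherwise it is a limit of points of $\mathcal{B}$, hence in $\mathcal{B}$); this yields either $\Gamma\subseteq\mathcal{B}$ or $\Gamma\setminus\{p\}\subseteq\mathcal{B}$ for an isolated $p$, and the converse cap-exhaustion construction shows the exceptional case is attained, giving optimality. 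The technically hardest steps are pinning the blow-up set to $\text{clos}\{Ric_{g_0}\neq0\}$ in (2) and establishing the at-most-one-point obstruction on the sphere in (4).
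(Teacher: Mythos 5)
The fatal gap is in part (3). You claim that for $\lambda(M,[g])>0$ ``the same local analysis still gives $\Gamma\subseteq\mathcal{B}_{\Gamma,\{\Omega_i\}}$,'' but the thin-layer/geodesic mechanism used in (1) and (2) requires the suitably normalized conformal factors to remain controlled across the point of $\Gamma$ under consideration (a positive smooth limit as in Lemma \ref{lemma-blowup-set1}, or at least a uniform gradient bound as in the $\lambda=0$ case); when $\lambda>0$ this is exactly what fails at \emph{isolated} points of $\Gamma$. There, normalizing by the minimum $m_i$ of $u_i$ on a fixed annulus around $x_0$, one gets $u_i/m_i\to aG_{x_0}+b$ with $G_{x_0}$ the Green's function of the conformal Laplacian, which is singular at $x_0$, and whether Ricci blow-up occurs is decided by the regular part of this limit; this is precisely where the positive mass theorem and the hypothesis that $(M,g)$ is not conformally equivalent to $S^n$ enter (the paper's Lemma \ref{lemma-not the sphere} and Theorem \ref{not the sphere}, which invoke Case 3 of Theorem 4.3 of \cite{HanShen3}). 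On the round sphere $G_{x_0}^{4/(n-2)}g_{S^n}$ is flat, so no blow-up need occur at an isolated point: your argument makes no use of the non-sphere hypothesis, so if it were correct it would give $\Gamma\subseteq\mathcal{B}_{\Gamma,\{\Omega_i\}}$ on $S^n$ as well, contradicting part (4) and the paper's examples (on $S^n$ with $\Gamma$ a pair of points the blow-up set can be a single point, and with $\Gamma$ a single point it can be empty). For the same reason your treatment of (4) is off target: the paper does not exclude two ``bounded-curvature ends'' by a Pohozaev/Kazdan--Warner identity (you assert this identity argument without proof); instead it compares the minima $m_{i1}\ge m_{i2}$ of $u_i$ near two isolated points and shows that the limit of $u_i/m_{i2}$ near $x_2$ has a strictly positive regular part $a_1G_{x_1}+b$, which by the positive-mass mechanism (Lemma \ref{lemma-sphere}) forces $x_2\in\mathcal{B}_{\Gamma,\{\Omega_i\}}$, as in Theorem \ref{sphere}.

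Part (2) is also incomplete, as you partly acknowledge. The scaling identity $|Ric_{g_i}|_{g_i}=c_i^{-4/(n-2)}|Ric_{h_i}|_{h_i}$ gives the inclusion $\mathcal{S}_g\subseteq\mathcal{B}_{\Gamma,\{\Omega_i\}}$ at once, but the reverse inclusion (boundedness of $Ric_{g_i}$ near points where the scalar-flat representative is Ricci-flat) does not follow from $|Ric_{h_i}|_{h_i}\to 0$ alone: the product can still diverge unless one knows the \emph{rate} of convergence. The paper's proof of Theorem \ref{nonpositive yam inv} supplies exactly this: with $w_i=u_i/m_i$ and $m_i=\inf_{\Omega_i}u_i$, a barrier-and-Harnack argument gives $0\le w_i-1\le Cm_i^{4/(n-2)}$ on compact sets away from $\Gamma$, hence $|\nabla_g w_i|+|\nabla^2_g w_i|=O\big(m_i^{4/(n-2)}\big)$ by interior estimates, and this rate exactly cancels the factor $m_i^{-4/(n-2)}$ in the Ricci formula \eqref{Ricci-cur-in-v-Manifold}. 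Without that estimate, your ``blows up exactly on $\text{clos}\{Ric_{g_0}\neq 0\}$'' is an assertion, not a proof. (A smaller point: your appeal to removability of $\Gamma$ ``by uniform two-sided barriers'' glosses over the critical-dimension component $\Gamma_2$, which the paper handles with a countable-cover supersolution argument in Lemma \ref{lemme-convergence1}.)
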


We point out that the second component in the union in (2) is a
well-defined closed set since any two conformal metrics of $g$
on $M$ with zero scalar curvatures differ from each other by a  positive constant factor
under the assumption $\lambda(M,[g])=0$.

 As we see in Theorem \ref{blow up set-empty-high}, the Yamabe invariant plays an essential role
in characterizing the blow-up sets.
We now make several comments.
First, (1)-(3) illustrate that the limit set $\Gamma$ is always contained in the blow-up set if
$(M, g)$ is not conformally equivalent to $S^{n}$.
Second, we will prove that all accumulation points of $\Gamma$ are always in the blow-up set.
The difference between (3) and (4)
lies on the number of isolated points in the limit set $\Gamma$ where blow-up occurs.
On manifolds not conformally
equivalent to the standard sphere, all isolated points in the limit set are in the blow-up set.
While on manifolds conformally equivalent to the standard sphere,
all but possibly one isolated point are in the blow-up set.
Third, the blow-up sets may contain points outside the limit sets $\Gamma$, and maybe the entire manifold,
as the second part of (3) demonstrates.

We will construct examples to demonstrate that Theorem  \ref{blow up set-empty-high}(4) is optimal.
For example, for the case $(M,g)=(S^n, g_{S^n})$ and $\Gamma=\{p,q\}$ for two distinct points $p,q\in S^n$,
with different sequences of exhausting domains,
the blow-up set can be any one of $p$ and $q$, or $\{p,q\}$, or even the entire $S^n$.
 For the case $(M,g)=(S^n, g_{S^n})$ and $\Gamma=\{p\}$ for a single point $p\in S^n$,
with different sequences of exhausting domains,
the blow-up set can be empty or nonempty.
In general, the blow-up set can be quite complex if the Yamabe invariant is positive.

The proofs of Theorems \ref{blow up set-nonempty-high}-\ref{blow up set-empty-high}
rely on a careful analysis of the Ricci curvatures of the complete conformal metrics near the boundary.
The Yamabe invariant of $(M,g)$ and the higher dimensional part $\Gamma_3$ play crucial roles and
determine behaviors of the convergence of the conformal factors.
As in \cite{HanShen3}, when the underlying manifold has a positive Yamabe invariant,
we need the positive mass theorem to study behaviors of conformal factors near isolated points in the limit set.

As consequences of Theorem \ref{blow up set-empty-high}, we have the following rigidity results.
Under certain conditions, if Ricci curvatures of a sequence of singular Yamabe metrics
remain bounded in a fixed small region, then the underlying manifold is conformally equivalent to
the standard sphere equipped with the standard spherical metric.

\begin{theorem}\label{them-rigidity}
Let $(M, g)$, $\Gamma$, $\{\Omega_i\}$, and $\{g_i\}$ be as in Assumption \ref{assumption-basic},
and $\Gamma=\Gamma_{1}\bigcup\Gamma_{2}$
for $\Gamma_1$ and $\Gamma_2$ as in \eqref{eq-def-Gamma1-2},
with an isolated point $x_0$ in $\Gamma$.
Suppose that the metric $g_i$
has uniformly bounded Ricci curvatures near $x_0$.
Then, $M$ is conformally equivalent to the standard sphere $S^n$.
\end{theorem}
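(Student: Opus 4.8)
The plan is to deduce this rigidity statement directly from Theorem \ref{blow up set-empty-high} by a contrapositive argument, since the substantive analytic work has already been carried out there. The first step is to reformulate the hypothesis in terms of the blow-up set. The assumption that $g_i$ has uniformly bounded Ricci curvatures near $x_0$ means that there exist a neighborhood $U$ of $x_0$ and a constant $C$ such that $|Ric_{g_i}(y)|\le C$ for every $i$ and every $y\in U\cap\Omega_i$. Consequently, for any sequence $\{x_m\}\subset M\setminus\Gamma$ with $x_m\to x_0$, the quantity $\sup_i|Ric_{g_i}(x_m)|$ stays bounded (eventually $x_m\in U\cap\Omega_i$), so it cannot diverge to $\infty$. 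By the definition of $\mathcal{B}_{\Gamma,\{\Omega_i\}}$, this shows precisely that $x_0\notin\mathcal{B}_{\Gamma,\{\Omega_i\}}$. Thus we produce an isolated point $x_0\in\Gamma$ lying outside the blow-up set.

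Next I would run through the four cases of Theorem \ref{blow up set-empty-high}, which partition all possibilities according to the sign of $\lambda(M,[g])$ and whether $(M,g)$ is conformally the round sphere. In case (1), $\lambda(M,[g])<0$, we have $\mathcal{B}_{\Gamma,\{\Omega_i\}}=\Gamma$; in case (2), $\lambda(M,[g])=0$, the blow-up set contains $\Gamma$; and in case (3), $\lambda(M,[g])>0$ with $(M,g)$ not conformally equivalent to $S^n$, we again have $\Gamma\subseteq\mathcal{B}_{\Gamma,\{\Omega_i\}}$. In each of these three cases $x_0\in\Gamma$ would force $x_0\in\mathcal{B}_{\Gamma,\{\Omega_i\}}$, contradicting the previous step. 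Hence none of (1)--(3) can occur.

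The only remaining possibility is case (4), in which $(M,g)$ is conformally equivalent to $S^n$; this is exactly the desired conclusion. For internal consistency one checks that case (4) is genuinely compatible with $x_0\notin\mathcal{B}_{\Gamma,\{\Omega_i\}}$: there the theorem allows $\Gamma\setminus\{p\}\subseteq\mathcal{B}_{\Gamma,\{\Omega_i\}}$ for a single isolated point $p$, and since $x_0$ is an isolated point of $\Gamma$ that is missing from the blow-up set, it must play the role of $p$. I expect the only delicate point to be the equivalence between the phrase ``uniformly bounded Ricci curvatures near $x_0$'' and the membership statement $x_0\notin\mathcal{B}_{\Gamma,\{\Omega_i\}}$; once this translation is made carefully, the remainder is a routine elimination of cases, with all the geometry and analysis already contained in Theorem \ref{blow up set-empty-high}.
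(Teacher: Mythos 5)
Your proposal is correct and follows essentially the same route as the paper: the paper derives Theorem \ref{them-rigidity} precisely by combining Theorem \ref{nonpositive yam inv} and Theorem \ref{not the sphere} (which together constitute parts (1)--(3) of Theorem \ref{blow up set-empty-high}), so that $\Gamma\subseteq\mathcal{B}_{\Gamma,\{\Omega_i\}}$ in every case where $(M,g)$ is not conformally equivalent to $S^n$, contradicting $x_0\notin\mathcal{B}_{\Gamma,\{\Omega_i\}}$. Your translation of the bounded-Ricci hypothesis into $x_0\notin\mathcal{B}_{\Gamma,\{\Omega_i\}}$ and the subsequent elimination of cases is exactly the intended argument.
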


\begin{theorem}\label{LCF-rigidity}
Let $(M, g)$, $\Gamma$, $\{\Omega_i\}$, and $\{g_i\}$ be as in Assumption \ref{assumption-basic},
$M$ be locally conformally flat with $\lambda(M,[g])>0$,
$\Gamma=\Gamma_{1}\bigcup\Gamma_{2}$
for $\Gamma_1$ and $\Gamma_2$ as in \eqref{eq-def-Gamma1-2},
and  $x_0$ be any point in $M$.
Suppose that the metric $g_i$ has uniformly bounded Ricci curvatures near $x_0$.
Then, $M$ is conformally equivalent to the standard sphere $S^n$.
\end{theorem}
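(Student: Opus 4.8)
The plan is to derive this statement as a direct contrapositive consequence of Theorem \ref{blow up set-empty-high}(3). I would suppose, toward a contradiction, that $(M,g)$ is not conformally equivalent to the standard sphere $S^n$. Then, since $\Gamma=\Gamma_1\cup\Gamma_2$ (so the higher-dimensional component $\Gamma_3$ is absent), $\lambda(M,[g])>0$, and $M$ is locally conformally flat, the hypotheses of the second assertion in Theorem \ref{blow up set-empty-high}(3) are exactly met, and I would invoke it to conclude $\mathcal{B}_{\Gamma,\{\Omega_i\}}=M$. In particular, the given point $x_0$ lies in $\mathcal{B}_{\Gamma,\{\Omega_i\}}$.

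The one substantive step is to check that this membership contradicts the uniform Ricci bound near $x_0$. By hypothesis there are a neighborhood $U$ of $x_0$ and a constant $C$ with $|Ric_{g_i}|\le C$ on $U\cap\Omega_i$ for every $i$. On the other hand, $x_0\in\mathcal{B}_{\Gamma,\{\Omega_i\}}$ supplies, by the very definition of the blow-up set, a sequence $\{x_m\}\subset M\setminus\Gamma$ with $x_m\to x_0$ and $\sup_i|Ric_{g_i}(x_m)|\to\infty$. For $m$ large the point $x_m$ lies in $U$, and since the $\Omega_i$ increase to $M\setminus\Gamma$ every index $i$ with $x_m\in\Omega_i$ satisfies $x_m\in U\cap\Omega_i$; hence $\sup_i|Ric_{g_i}(x_m)|\le C$ for all large $m$, contradicting the divergence. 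This forces $(M,g)$ to be conformally equivalent to $S^n$.

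I expect no genuine obstacle here, as the geometric and analytic content is entirely absorbed into Theorem \ref{blow up set-empty-high}(3); the present statement is precisely the local rigidity reading of the global identity $\mathcal{B}_{\Gamma,\{\Omega_i\}}=M$. The only point deserving attention is the reconciliation of the two notions of boundedness—the pointwise, uniform-in-$i$ bound assumed near $x_0$ against the sequential blow-up criterion in the definition of $\mathcal{B}_{\Gamma,\{\Omega_i\}}$—which the neighborhood-capture argument above settles. It is also worth confirming that the exclusion of $\Gamma_3$ and the local conformal flatness are indispensable, since they are exactly what upgrades the inclusion $\Gamma\subseteq\mathcal{B}_{\Gamma,\{\Omega_i\}}$ to the full equality $\mathcal{B}_{\Gamma,\{\Omega_i\}}=M$ in Theorem \ref{blow up set-empty-high}(3).
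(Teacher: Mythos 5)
Your proposal is correct and follows the paper's own route: the paper proves this rigidity statement as an immediate consequence of Theorem \ref{LCF} (which is exactly the second assertion of Theorem \ref{blow up set-empty-high}(3), i.e., $\mathcal{B}_{\Gamma,\{\Omega_i\}}=M$ for locally conformally flat $M$ with $0<\lambda(M,[g])<\lambda(S^n,[g_{S^n}])$), argued by contraposition just as you do. Your explicit reconciliation of the pointwise uniform-in-$i$ Ricci bound near $x_0$ with the sequential divergence in the definition of the blow-up set is a correct filling-in of the step the paper leaves as ``follows easily.''
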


The paper is organized as follows.
In Section \ref{An Equivalent Form},
we discuss some preliminary identities and present two important lemmas concerning conformal factors.
In Section \ref{sec-general result}, we
prove Theorem \ref{blow up set-nonempty-high} and
Theorem \ref{blow up set-empty-high}(1)-(2). We also prove that all accumulation points of the limit set
$\Gamma$ are in the blow-up sets.
In Section \ref{Isolated points}, we discuss isolated points of $\Gamma$ and
prove the first assertion in Theorem \ref{blow up set-empty-high}(3) and Theorem \ref{blow up set-empty-high}(4).
In Section \ref{sec-complement-Gamma}, we study whether blow-up sets contain points
outside the limit sets and prove the second assertion in Theorem \ref{blow up set-empty-high}(3).
In Section \ref{sec-examples}, we present several examples
in $S^{n}$ to demonstrate the complexity of blow-up sets.

We would like to thank Matthew Gursky, Marcus Khuri, and Yuguang Shi for helpful discussions.

\section{Preliminaries}\label{An Equivalent Form}

In this section,
we collect some basic results concerning the singular Yamabe metrics.
We also prove two important convergence results concerning the conformal factors of the singular Yamabe metrics.

Let $(M,g)$ be a smooth compact Riemannian manifold of dimension $n\ge 3$ without boundary.
The {\it Yamabe invariant} of $(M,g)$ is given by
$$\lambda(M, [g])=\inf\Big\{
\frac{\int_M  (  |\nabla_g \phi|^2 +\frac{n-2}{4(n-1)} S_g\phi^2 )dV_g   }{(\int_M \phi^{\frac{2n}{n-2}}dV_g)^{\frac{n-2}{n}}};
\,\phi \in C^{\infty}(M),\phi>0 \Big\}.$$
We define the conformal Laplacian of $(M,g)$ by
$$L_g=-\Delta_g +\frac{n-2}{4(n-1)}S_g.$$
For any functions $u$ and $\psi$ in $M$ with $\psi>0$, we have
\begin{equation}\label{confotmal laplacian}
L_g(\psi u)= \psi^{\frac{n+2}{n-2}}L_{\psi^{\frac{4}{n-2}}g}(u).
\end{equation}

Assume $\Omega\subset M$ is a smooth domain,
with an $(n-1)$-dimensional boundary. We consider the following problem:
\begin{align}
\label{eq-MEq} \Delta_{g} u -\frac{n-2}{4(n-1)}S_gu&= \frac14n(n-2) u^{\frac{n+2}{n-2}} \quad\text{in }\,\Omega,\\
\label{eq-MBoundary}u&=\infty\quad\text{on }\partial \Omega,
\end{align}
where $S_g$ is the scalar curvature of $M$.
According to Loewner and Nirenberg \cite{Loewner&Nirenberg1974} for $(M,g)=(S^n, g_{S^n})$ and
Aviles and McOwen \cite{AM1988DUKE} for the general case,
\eqref{eq-MEq} and \eqref{eq-MBoundary} admit a unique positive solution.
The singular Yamabe metric $u^{\frac{4}{n-2}}g$ is
a complete metric with a constant scalar curvature $-n(n-1)$ on $\Omega$.

\smallskip

Concerning boundary behaviors of the conformal factors,
Andersson, Chru\'sciel, and Friedrich \cite{ACF1982CMP} and Mazzeo \cite{Mazzeo1991} established
the polyhomogeneous expansions. For the first several terms, we have
$$u=d^{-\frac{n-2}{2}}\Big[1+\frac{n-2}{4(n-1)}H_{\partial\Omega}d+O(d^2)\Big],$$
where $d$ is the distance to $\partial\Omega$ and $H_{\partial\Omega}$ is the mean curvature of $\partial\Omega$
with respect to the interior
unit normal vector of $\partial\Omega$.

Set
\begin{equation*}%\label{eq-def-v}
v=u^{-\frac{2}{n-2}}.\end{equation*}
Then,
\begin{align}\label{eq-MEq-v}
v\Delta_{g} v +\frac{1}{2(n-1)}S_{ g} v^2&= \frac{n}{2}(|\nabla_{g} v|^2-1)\quad\text{in }\Omega, \\
\label{eq-MBoundary-v}v&=0\quad\text{on }\partial \Omega.
\end{align}
Moreover,
\begin{equation}\label{boundary expansion v}v=d-\frac{1}{2(n-1)}H_{\partial\Omega}d^2+O(d^3).\end{equation}
This implies
\begin{equation}\label{boundary expansion v-gradient}|\nabla_gv|=1\quad\text{on }\partial\Omega.\end{equation}

Consider the conformal metric
\begin{equation*}%\label{eq-conformal-metric}
g_\Omega=u^{\frac{4}{n-2}}g=v^{-2}g.\end{equation*}
For a unit vector $X$ with respect to $g$, $vX$ is a unit vector with respect to $g_\Omega$.
Let $R_{ij}$ be the Ricci components of $g$ in a local frame for the metric $g$ and
$R^\Omega_{ij}$ be the Ricci components of $g_\Omega$
in the corresponding frame for the metric $g_\Omega$.
Then,
\begin{equation}\label{Ricci cur  in v Manifold0}
{R}^\Omega_{kl}=v^2R_{kl}+(n-2)\big[vv_{,kl}-\frac12g_{kl}|\nabla _gv|^2\big]
+g_{kl}\big[v\Delta_gv-\frac{n}{2}|\nabla_gv|^2\big].
\end{equation}
By \eqref{eq-MEq-v}, we have
\begin{equation}\label{Ricci cur  in v Manifold}
{R}^\Omega_{kl}=v^2R_{kl}-\frac{1}{2(n-1)}v^2g_{kl}S_g+(n-2)\big[vv_{,kl}-\frac{1}{2}g_{kl}|\nabla_gv|^2\big]-\frac{n}{2}g_{kl}.
\end{equation}

We now present two convergence results which generalize Lemma 4.1 in \cite{HanShen3}.
They play an important role in this paper.

\begin{lemma}\label{lemme-convergence1}
Let $(M, g)$, $\Gamma$, and $\{\Omega_i\}$ be as in Assumption \ref{assumption-basic},
and $\Gamma=\Gamma_{1}\bigcup\Gamma_{2}$
for $\Gamma_1$ and $\Gamma_2$ as in \eqref{eq-def-Gamma1-2}.
Suppose that the scalar curvature $S_g$ is constant and
that $u_i$ is the solution of \eqref{eq-MEq} and \eqref{eq-MBoundary} in $\Omega_i$.
Then, for any positive integer $m$, if $S_g\ge 0$,
\begin{equation}\label{u_i go to zero-plus-v1}
u_i\rightarrow 0\quad\text{in }C^{m}_{\mathrm{loc}}( M \setminus\Gamma)
\text{ as }i\rightarrow\infty,\end{equation}
and, if $S_g<0$,
\begin{equation}\label{u_i go to zero-minus}
u_i\rightarrow \Big(\frac{-S_g}{n(n-1)}\Big)^{\frac{n-2}{4}}
\quad\text{in }C^{m}_{\mathrm{loc}}( M \setminus\Gamma)
\text{ as }i\rightarrow\infty.\end{equation}
\end{lemma}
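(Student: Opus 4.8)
The plan is to realize the limit as a monotone limit of the conformal factors, to extend it across $\Gamma$ by a removable-singularity argument, and then to pin down its value by a Liouville-type statement on the closed manifold $M$. Throughout I write the equation \eqref{eq-MEq} as $\Delta_g u = f(u)$ with $f(u)=\tfrac{n-2}{4(n-1)}S_g u+\tfrac14 n(n-2)u^{(n+2)/(n-2)}$. First I would establish monotonicity: since $\Omega_i\subset\Omega_{i+1}$, on $\partial\Omega_i$ we have $u_i=+\infty\ge u_{i+1}$, and as $f$ is increasing on the relevant range (on $(0,\infty)$ if $S_g\ge 0$, and on $[c,\infty)$ with $c:=(\tfrac{-S_g}{n(n-1)})^{(n-2)/4}$ if $S_g<0$), the maximum principle applied on $\{u_i<T\}$ with $T\to\infty$ gives $u_{i+1}\le u_i$ in $\Omega_i$. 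For the lower bound I would evaluate the equation at an interior minimum of $u_i$, where $\Delta_g u_i\ge 0$ forces $f(u_i)\ge 0$; since $S_g$ is constant this yields $u_i\ge c$ when $S_g<0$ (and only $u_i>0$ when $S_g\ge 0$). Hence $u_i$ decreases to a limit $u_\infty\ge c$ (resp.\ $u_\infty\ge 0$). When $S_g<0$ it is convenient to pass to $v_i=u_i^{-2/(n-2)}$, which then satisfies the uniform two-sided bound $0\le v_i\le v_*:=c^{-2/(n-2)}$, increasing to $v_\infty$, and solves \eqref{eq-MEq-v}.

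Next I would produce the quantitative upper barrier $u_i(x)\le C\,\mathrm{dist}_g(x,\Gamma)^{-(n-2)/2}$, uniform in $i$. For $x\notin\Gamma$ and any $r<\mathrm{dist}_g(x,\Gamma)$, the ball $B_r(x)$ lies in $M\setminus\Gamma\subset\Omega_i$ for large $i$; comparing $u_i$ with the singular Yamabe solution on $B_r(x)$, which equals $+\infty\ge u_i$ on $\partial B_r(x)$, bounds $u_i(x)$ by the solution's value at the center, which is comparable to $r^{-(n-2)/2}$ by scaling. Letting $r\uparrow\mathrm{dist}_g(x,\Gamma)$ gives the barrier. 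Consequently the $u_i$ are uniformly bounded on every compact $K\Subset M\setminus\Gamma$, interior Schauder estimates yield uniform $C^{m+1,\alpha}(K)$ bounds, and the monotone limit $u_\infty$ solves \eqref{eq-MEq} in $M\setminus\Gamma$ with $u_\infty\le C\,\mathrm{dist}_g(\cdot,\Gamma)^{-(n-2)/2}$ (equivalently $v_\infty\ge c_0\,\mathrm{dist}_g(\cdot,\Gamma)$).

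The heart of the argument, and the step I expect to be the main obstacle, is to show that $u_\infty$ extends to a genuine solution across $\Gamma$ on all of $M$. For the component $\Gamma_1$, of Hausdorff dimension $<(n-2)/2<n-2$, this is a standard removable-singularity statement. The delicate part is $\Gamma_2$, of dimension exactly $(n-2)/2$: this is precisely the critical dimension at which the barrier rate $\mathrm{dist}^{-(n-2)/2}$ just fails to lie in the natural energy space $W^{1,2}_{\mathrm{loc}}$, so boundedness of $u_\infty$ near $\Gamma_2$ cannot be read off from the barrier alone. To close this gap I would exploit two facts: (i) since $\Gamma_2$ is contained in countably many $C^1$-submanifolds of dimension $(n-2)/2<n-2$, it has vanishing $(n-2)$-dimensional Hausdorff measure and hence zero Newtonian capacity, so cut-off functions supported near $\Gamma_2$ with arbitrarily small Dirichlet energy are available; and (ii) a genuine blow-up of $u_\infty$ at the critical rate would make $g_\infty=u_\infty^{4/(n-2)}g\sim \mathrm{dist}^{-2}g$ complete near $\Gamma_2$ with scalar curvature $-n(n-1)$, contradicting the Loewner--Nirenberg/Aviles--McOwen nonexistence of complete singular Yamabe metrics when $\dim\Gamma\le(n-2)/2$. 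Combining a capacity/cut-off estimate carried out in the bounded variable $v_\infty$ with this nonexistence, I expect to conclude that $u_\infty$ is bounded near $\Gamma$ and extends to a solution on $M$; this upgrade from the critical-rate barrier to actual removability at $\Gamma_2$ is where the real work lies.

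Once $u_\infty$ solves the equation on the closed manifold $M$, its value is forced using that $S_g$ is constant. If $S_g\ge 0$, integrating \eqref{eq-MEq} over $M$ gives $0=\int_M f(u_\infty)\,dV_g$ with $f(u_\infty)\ge 0$ pointwise, whence $u_\infty\equiv 0$. If $S_g<0$, at a maximum point of $u_\infty$ one has $\Delta_g u_\infty\le 0$, so $f(u_\infty)\le 0$ there, forcing $\max u_\infty\le c$; together with $u_\infty\ge c$ this gives $u_\infty\equiv c$. Finally, the pointwise monotone convergence together with the uniform interior $C^{m+1,\alpha}$ bounds on each $K\Subset M\setminus\Gamma$ upgrades to $C^m_{\mathrm{loc}}$ convergence: every subsequence has a further subsequence converging in $C^m(K)$, necessarily to the identified limit, so the whole sequence converges, which is the assertion.
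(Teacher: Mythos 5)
Your skeleton---monotone decrease of $u_i$, the lower bound $u_i\ge\big(\tfrac{-S_g}{n(n-1)}\big)^{\frac{n-2}{4}}$ when $S_g<0$, uniform interior estimates giving a limit $u_\infty$ solving \eqref{eq-MEq} on $M\setminus\Gamma$, extension across $\Gamma$, and the identification of the limit on the closed manifold---is the same as the paper's. But the step you yourself flag as ``where the real work lies,'' removability across the critical set $\Gamma_2$, is only sketched, and the sketch fails for two concrete reasons. (a) You invoke the wrong capacity: removability of singularities for $\Delta_g u=\tfrac{n-2}{4(n-1)}S_gu+\tfrac14 n(n-2)u^{\frac{n+2}{n-2}}$ is governed by the capacity/dimension threshold attached to the exponent $\tfrac{n+2}{n-2}$, namely Hausdorff dimension $(n-2)/2$, which is exactly borderline here---not by Newtonian ($W^{1,2}$) capacity. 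Zero Newtonian capacity is far too weak: for $n\ge 5$, a smooth closed submanifold of dimension $d$ with $(n-2)/2<d<n-2$ has zero Newtonian capacity, and yet by the very existence theorem of Loewner--Nirenberg and Aviles--McOwen it carries complete, hence non-removable, singular solutions of this equation. This is precisely why the paper does not run a soft capacity argument but cites the sharp borderline removability theorems of Aviles (Theorems 2.1 and 2.2 in \cite{A1982CPDE}). (b) Your item (ii) rests on a false dichotomy: failure of removability does not make $g_\infty=u_\infty^{4/(n-2)}g$ complete near $\Gamma_2$. Completeness requires every curve approaching $\Gamma_2$ to have infinite $g_\infty$-length, i.e.\ a critical-rate \emph{lower} bound on $u_\infty$, whereas non-removability only gives unboundedness, possibly along sparse sequences or at a slower rate; this middle case is not excluded, so no contradiction with the nonexistence theorem can be extracted. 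Moreover, that nonexistence theorem is classically \emph{derived from} removability estimates, so even if the dichotomy held, the argument would be circular rather than an independent proof.

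There is also a structural omission in the general case. Even granting removability across a single smooth submanifold of dimension $(n-2)/2$, the lemma allows $\Gamma_2$ to be merely a closed subset of a \emph{countable} union of $C^1$ submanifolds, possibly accumulating and possibly meeting $\Gamma_1$; near a point of $\Gamma$ the set need not lie in any single $N_l$, so a local extension theorem cannot simply be applied pointwise. The paper handles this at the level of the approximating solutions rather than the limit: for each $N_l$ (and for $\Gamma_1$) it takes exhausting domains of $M\setminus N_l$ and applies the already-established special case; then, given a point $p$ and $\epsilon>0$, it chooses indices so that $u^l_{i(l,\epsilon)}(p)<\epsilon/2^{l+1}$, extracts a finite subcover of the compact set $\Gamma$ by the complements $M\setminus\overline{\Omega^l_{i(l,\epsilon)}}$, and observes that the finite sum $u^0_{i(0,\epsilon)}+\sum_j u^{l_j}_{i(l_j,\epsilon)}$ (suitably shifted by constants when $S_g<0$) is a supersolution on the intersection of these domains, whence $u(p)<\epsilon$ (resp.\ $u(p)<1+\epsilon$) by the maximum principle. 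Your proposal has no counterpart to this covering/supersolution step, and the capacity argument that was meant to subsume it fails as explained above. The remaining parts of your proposal (monotonicity, lower bound, barrier, the integral/maximum-principle identification of the constant limit, and the upgrade to $C^m_{\mathrm{loc}}$ convergence) are correct and agree with the paper.
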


\begin{proof} Recall that $\Gamma_{1}$ is a compact set in $M$ of Hausdorff dimension less than $(n-2)/2$
and $\Gamma_{2}$ is a closed subset in the union $\bigcup_{l=1}^{\infty}N_{l}$, where each $N_{l}$ is a closed
$C^{1}$-embedded submanifold in $M$ of dimension $(n-2)/2.$

As in the proof of Lemma 4.1 in \cite{HanShen3},
we have $u_i \geq u_{i+1}$ in $\Omega_i$. It is straightforward to verify, for any $m$,
$$u_i\rightarrow u\quad\text{in }C^{m}_{\mathrm{loc}}( M \setminus\Gamma)
\text{ as }i\rightarrow\infty,$$
where $u$ is a nonnegative solution of \eqref{eq-MEq} in $M \setminus\Gamma$.
We consider two cases.

{\it Special Case.}
We first assume $\Gamma_2=\bigcup_{l=1}^{k}N_l$, where $k$ is a finite integer.
Moreover, $\Gamma_1$ and $N_l$ are disjoint.

By Theorem 2.1 and Theorem 2.2 in \cite{A1982CPDE},
$u$ can be extended to the whole manifold $M$ and \eqref{eq-MEq} holds in $M.$
Then,
\begin{equation}\label{u indentity}
- \int_{M}|\nabla_g u|^2 dV_g=  \int_{M}\Big( \frac{n-2}{4(n-1)}S_gu^2+ \frac14n(n-2) u^{\frac{2n}{n-2}} \Big)dV_g.
\end{equation}
If $S_g\ge 0$, then $u=0$ in $M$.
If $S_g<0$, by the proof of Lemma 4.1 in \cite{HanShen3}, we have
\begin{equation*}%\label{eq-AlgebraicRelation}
u_i\geq
\Big(\frac{-S_g}{n(n-1)}\Big)^{\frac{n-2}{4}}
\quad\text{in }\Omega_i.\end{equation*}
Hence,
\begin{equation*}u\geq
\Big(\frac{-S_g}{n(n-1)}\Big)^{\frac{n-2}{4}}
\quad\text{in }M.\end{equation*}
Combining with \eqref{u indentity}, we have $u=\Big(\frac{-S_g}{n(n-1)}\Big)^{\frac{n-2}{4}}$ in $M$.

{\it General Case.} We now consider the general case.
For unification, we write  $N_0=\Gamma_1$.
For each $l\geq0$,
let $\Omega_{i}^{l}$ be a sequence of increasing smooth domains in $M \setminus N_{l}$,
which converges to $M \setminus N_{l}$,
and $u_{i}^{l}$ be the solution
of \eqref{eq-MEq} and \eqref{eq-MBoundary} in $\Omega_{i}^{l}$.

First, we consider $S_g\geq0$.
Take a fixed point $p\in M\setminus\overline{\cup_{l=0}^{\infty}N_{l}}$,
and we will prove $u(p)=0$. Then, we have $u=0$ in $M\setminus \Gamma$.
As in the special case, for any fixed $l\geq0$,
we have $u_{i}^{l}\rightarrow0$
uniformly in any compact subsets of $M\setminus N_l$
as $i\rightarrow\infty$. Then, for any $\epsilon>0$, there exists an integer $i(l,\epsilon)$
such that $u_{i(l,\epsilon)}^l(p)<\epsilon/2^{l+1}$.
Note that both $\Gamma_1$ and $\Gamma_2$ are closed
and $\bigcup_{l=0}^{\infty}M\setminus \overline{\Omega_{i(l,\epsilon)}^{l} }$
covers $\Gamma_1\bigcup\Gamma_2$.
Then, $\Gamma_1\bigcup\Gamma_2$ has a finite subcover, given by
$M\setminus \overline{\Omega_{i(0,\epsilon)}^{0} }$,
$M\setminus \overline{\Omega_{i(l_1,\epsilon)}^{l_1} }$, $\cdots$,
$M\setminus \overline{\Omega_{i(l_k,\epsilon)}^{l_k} }$.
It is easy to verify $u_{i(0,\epsilon)}^0+\sum_{j=1}^ku_{i(l_j,\epsilon)}^{l_j}$ is a supersolution of \eqref{eq-MEq}
in $\Omega_{i(0,\epsilon)}^0\bigcap\big(\bigcap_{j=1}^{k}\Omega_{i(l_j,\epsilon)}^{l_j}\big)$.
Then by the maximum principle, we have $u(p)<\epsilon$. Hence, $u(p)=0$.

Next, we consider $S_g<0$. Without loss of generality, we assume $S_g=-n(n-1)$.
Take a fixed point $p\in M\setminus\overline{\cup_{l=0}^{\infty}N_{l}}$, and we will prove $u(p)=1$.
Then, we have $u=1$ in $M\setminus \Gamma$.
By the special case, for any fixed $l\geq0$,
we have $u_{i}^{l}\geq 1$ and $u_{i}^{l}\rightarrow 1$
uniformly in any compact subsets of $M\setminus N_l$,
as $i\rightarrow\infty$. Then, for any $\epsilon>0$, there exists  an integer  $i(l,\epsilon)$
such that $u_{i(l,\epsilon)}^l(p)<1+\epsilon/2^{l+1}$.
Note that both $\Gamma_1$ and $\Gamma_2$ are closed and
$\bigcup_{l=0}^{\infty}M\setminus \overline{\Omega_{i(l,\epsilon)}^{l} }$ covers $\Gamma_1\bigcup\Gamma_2$.
Then, $\Gamma_1\bigcup\Gamma_2$ has a finite subcover, given by
$M\setminus \overline{\Omega_{i(0,\epsilon)}^{0} }$,
$M\setminus \overline{\Omega_{i(l_1,\epsilon)}^{l_1} }$, $\cdots$, $M\setminus \overline{\Omega_{i(l_k,\epsilon)}^{l_k} }$.
We can verify that
$$1+(u_{i(0,\epsilon)}^0-1)+\sum_{j=1}^k(u_{i(l_j,\epsilon)}^{l_j}-1)$$
is a supersolution of \eqref{eq-MEq}
in $\Omega_{i(0,\epsilon)}^0\bigcap\big(\bigcap_{j=1}^{k}\Omega_{i(l_j,\epsilon)}^{l_j}\big)$.
Then by the maximum principle, we have $u(p)<1+\epsilon$.
On the other hand, we have $u\geq 1$ as in the special case. Hence, $u(p)=1$.
\end{proof}

If $\Gamma_3$ is not an empty set, we have the following convergence result.

\begin{lemma}\label{lemme-convergence2}
 Let $(M, g)$, $\Gamma$, and $\{\Omega_i\}$ be as in Assumption \ref{assumption-basic},
and $\Gamma=\Gamma_{1}\bigcup\Gamma_{2}\bigcup\Gamma_{3}$
for $\Gamma_1, \Gamma_2$, and $\Gamma_3$ as in \eqref{eq-def-Gamma1-2} and \eqref{eq-def-Gamma3},
with $(\Gamma_{1}\bigcup\Gamma_{2})\bigcap\Gamma_3=\emptyset$ and $\Gamma_{3}\neq\emptyset$.
Suppose that the  scalar curvature $S_g$ is constant and
that  $u_i$ is the solution
of \eqref{eq-MEq} and \eqref{eq-MBoundary} in $\Omega_i$.
Then, for any positive integer $m$,
\begin{equation}\label{u_i go to zero-plus}
u_i\rightarrow u\quad\text{in }C^{m}_{\mathrm{loc}}( M \setminus\Gamma)
\text{ as }i\rightarrow\infty,\end{equation}
for some $u\in C^{\infty}( M \setminus\Gamma_{3}) $ satisfying $u>0$ and
$$\Delta_{g} u -\frac{n-2}{4(n-1)}S_gu= \frac14n(n-2) u^{\frac{n+2}{n-2}}
\quad\text{in }M  \setminus\Gamma_{3}.$$
\end{lemma}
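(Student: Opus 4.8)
The plan is to follow the structure of Lemma \ref{lemme-convergence1}, realizing $u$ as a monotone limit, and then to isolate the two features that distinguish the present case: the higher dimensional part $\Gamma_3$ forces the limit to be \emph{positive}, while the lower dimensional parts $\Gamma_1,\Gamma_2$ remain \emph{removable}. First I would establish the convergence and the equation on $M\setminus\Gamma$. Exactly as at the start of Lemma \ref{lemme-convergence1}, the monotonicity $\Omega_i\subset\Omega_{i+1}$ together with the comparison principle for \eqref{eq-MEq}--\eqref{eq-MBoundary} (a larger domain yields a smaller solution) gives $u_i\ge u_{i+1}$ in $\Omega_i$, so $u_i$ decreases pointwise to some $u\ge 0$. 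For any compact $K\subset M\setminus\Gamma$, choosing $i_0$ with $K\subset\Omega_{i_0}$ gives $0\le u_i\le u_{i_0}\le\max_K u_{i_0}<\infty$ on $K$ for $i\ge i_0$, so interior elliptic estimates upgrade the pointwise convergence to $C^m_{\mathrm{loc}}(M\setminus\Gamma)$ and show that $u$ solves \eqref{eq-MEq} on $M\setminus\Gamma$. This part does not use the dimensions of the pieces of $\Gamma$.

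Next comes positivity, which is where $\Gamma_3$ enters. Since each component of $\Gamma_3$ has dimension greater than $(n-2)/2$, the theorem of Aviles and McOwen \cite{AM1988DUKE} provides a singular Yamabe metric on $M\setminus\Gamma_3$; let $w>0$ be its conformal factor, so $w$ solves \eqref{eq-MEq} on $M\setminus\Gamma_3$, is smooth there, and blows up along $\Gamma_3$. Because $\overline{\Omega_i}\subset M\setminus\Gamma\subset M\setminus\Gamma_3$, the function $w$ is finite and smooth on $\partial\Omega_i$, where $u_i=\infty$. The nonlinearity $F(t)=\frac{n-2}{4(n-1)}S_g t+\frac14 n(n-2)t^{\frac{n+2}{n-2}}$ is increasing on the range occupied by our solutions (automatic when $S_g\ge 0$, and valid when $S_g<0$ because both $u_i$ and $w$ exceed the constant barrier $(-S_g/(n(n-1)))^{(n-2)/4}$, which already lies above the threshold where $F'>0$). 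The comparison principle for \eqref{eq-MEq} then gives $u_i\ge w$ in $\Omega_i$, and letting $i\to\infty$ yields $u\ge w>0$ on $M\setminus\Gamma$. In particular the limit is strictly positive, and since $u\ge w$, the metric $u^{4/(n-2)}g$ is complete near $\Gamma_3$.

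Finally I would remove the singularity along $\Gamma_1\cup\Gamma_2$. Working in a neighborhood $U$ of $\Gamma_1\cup\Gamma_2$ with $\overline U\cap\Gamma_3=\emptyset$, the function $u$ is a positive solution of \eqref{eq-MEq} on $U\setminus(\Gamma_1\cup\Gamma_2)$. As in the Special Case of Lemma \ref{lemme-convergence1}, when $\Gamma_2$ is a finite union of the $N_l$ disjoint from $\Gamma_1$, the removable singularity theorems of Aviles (Theorems 2.1 and 2.2 in \cite{A1982CPDE}), applicable because $\dim\Gamma_1<(n-2)/2$ and $\Gamma_2$ lies in $C^1$ submanifolds of dimension $(n-2)/2$, extend $u$ across $\Gamma_1\cup\Gamma_2$ to a solution on $M\setminus\Gamma_3$; elliptic regularity gives $u\in C^\infty(M\setminus\Gamma_3)$, and positivity persists by continuity since $u\ge w>0$. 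For the general (countably infinite) case I would reduce to this via the finite-subcover supersolution argument of Lemma \ref{lemme-convergence1}: sums of singular Yamabe solutions on the $M\setminus N_l$, added to the base $w$ (or to its translate by the constant barrier when $S_g<0$), furnish supersolutions that are infinite on the boundary of a small neighborhood of $\Gamma_1\cup\Gamma_2$ and finite inside, hence dominate $u$ and give a uniform local upper bound; $u$ is then a bounded positive solution across a set of dimension $\le (n-2)/2$, and the bounded singularity is removable. Once $u$ is known to extend across $\Gamma_1\cup\Gamma_2$, an alternative clean conclusion is available: the completeness noted above and the uniqueness of the complete singular Yamabe metric on $M\setminus\Gamma_3$ identify $u$ with $w$, yielding all the asserted properties at once.

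The hard part will be this last, removable-singularity step across the borderline set $\Gamma_2$ of dimension exactly $(n-2)/2$, which is only $C^1$ and possibly a countable union, together with transporting positivity and completeness to the extended solution; the comparison inequality $u_i\ge w$ against a blow-up boundary and the monotonicity of $F$ on the relevant range when $S_g<0$ are the remaining technical points, both of which parallel verifications already carried out in Lemma \ref{lemme-convergence1}.
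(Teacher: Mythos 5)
Your first two steps are correct and coincide with the paper's: the monotone limit plus interior estimates gives \eqref{u_i go to zero-plus} with $u$ solving \eqref{eq-MEq} on $M\setminus\Gamma$, and positivity follows by comparison with the Aviles--McOwen solution $w$ on $M\setminus\Gamma_3$ (the paper's $\underline{u}$); your verification that the comparison principle is legitimate when $S_g<0$, because both competitors lie above the constant $(-S_g/(n(n-1)))^{(n-2)/4}$, is also sound. The special case of your third step (finite union, disjoint from $\Gamma_1$, via Theorems 2.1 and 2.2 of \cite{A1982CPDE}) parallels the Special Case of Lemma \ref{lemme-convergence1}. The genuine gap is in the general case of the extension across $\Gamma_1\cup\Gamma_2$, the step you yourself flag as hard. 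The supersolution you propose, $\Phi=\sum_j u^{l_j}+w$ (with constant adjustments when $S_g<0$), is only defined on $\bigcap_j\Omega^{l_j}\cap(M\setminus\Gamma_3)$, and by the very choice of the finite subcover this set is \emph{disjoint} from a neighborhood of $\Gamma_1\cup\Gamma_2$; it therefore cannot be ``finite inside a small neighborhood of $\Gamma_1\cup\Gamma_2$'' and cannot dominate $u$ there, so no ``uniform local upper bound'' near $\Gamma_1\cup\Gamma_2$ comes out, and the subsequent ``bounded singularity is removable'' step has nothing to act on. Any attempt to compare on a region that does reach $\Gamma_1\cup\Gamma_2$ fails for a different reason: $\partial\Omega_i$ clusters on every point of $\Gamma$, so such a region meets $\partial\Omega_i$ for all large $i$, where $u_i=\infty$ exceeds any finite supersolution, and the maximum principle hypotheses are violated exactly where you need them.

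There is a second defect even on the region where $\Phi$ is defined: using $w$ as the term that handles $\Gamma_3$ provides no open buffer around $\Gamma_3$, since $w$ is infinite only \emph{on} $\Gamma_3$. The comparison region then contains points arbitrarily close to $\Gamma_3$, hence contains points of $\partial\Omega_i$ (where $u_i=\infty$, $\Phi<\infty$); and if you instead compare the limit $u$ with $\Phi$, the boundary inequality is unverifiable on the $\Gamma_3$-portion of the boundary because $u\ge w$ blows up there too. This is precisely what the paper's extra construction is for, and it is the idea missing from your outline: set $\Omega_{i3}=\Omega_i\cup\Omega'$ for a fixed open $\Omega'$ with $\Gamma_1\cup\Gamma_2\subset\Omega'\subset\subset M\setminus\Gamma_3$, let $u_{i3}$ solve \eqref{eq-MEq}--\eqref{eq-MBoundary} on $\Omega_{i3}$, and let $\widetilde{u}=\lim u_{i3}$. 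Then $\widetilde{u}$ is \emph{automatically} a smooth positive solution on all of $M\setminus\Gamma_3$ (its domains contain the fixed neighborhood $\Omega'$ of $\Gamma_1\cup\Gamma_2$, and $\widetilde{u}\ge\underline{u}>0$), so no removability theorem is needed in the general case. The identification $u=\widetilde{u}$ is then proved at points $p$ away from $\overline{\bigcup_l N_l}\cup\Gamma_3$ by the covering argument with supersolution $u^0_{i(0,\epsilon)}+\sum_j u^{l_j}_{i(l_j,\epsilon)}+u_{i(\epsilon)3}$: the last term blows up on $\partial\Omega_{i(\epsilon)3}$, so $M\setminus\overline{\Omega_{i(\epsilon)3}}$ is an \emph{open neighborhood} of $\Gamma_3$, which together with the finite subcover of $\Gamma_1\cup\Gamma_2$ keeps $\partial\Omega_i$ entirely outside the comparison region for large $i$; the maximum principle then applies cleanly and gives $u(p)<\widetilde{u}(p)+2\epsilon$. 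To repair your proof, replace $w$ in your supersolution by $u_{i(\epsilon)3}$ and conclude $u=\widetilde{u}$ as in the paper (so that $u$ inherits the smooth positive extension), or else supply a genuine capacity-type removability theorem valid for the full class \eqref{eq-def-Gamma1-2}, including closed subsets of countably infinite unions of borderline-dimension $C^1$ submanifolds possibly meeting $\Gamma_1$ --- a case that the results of \cite{A1982CPDE} invoked in your special case do not cover.
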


\begin{proof}
By the maximum principle, we have $u_i \geq u_{i+1}$ in $\Omega_i$. It is straightforward to verify, for any $m$,
$$u_i\rightarrow u\quad\text{in }C^{m}_{\mathrm{loc}}( M \setminus\Gamma)
\text{ as }i\rightarrow\infty,$$
where $u$ is a nonnegative solution of \eqref{eq-MEq} in $M \setminus\Gamma$.

According to \cite{AM1988DUKE}, there exists a complete conformal metric $\underline{u}^{\frac{4}{n-2}}g$
with a negative constant scalar curvature $-n(n-1)$ on $M  \setminus\Gamma_{3}$.
Although results in \cite{AM1988DUKE} were formulated for $\Gamma_3$ as a smooth submanifold,
their proof also holds for $\Gamma_3$ as in \eqref{eq-def-Gamma3}.
Hence, \eqref{eq-MEq} and \eqref{eq-MBoundary} have a positive solution $\underline{u}$
for $\Omega=M\setminus\Gamma_3$.

Take an open set $\Omega'$ in $M$ such that
$\Gamma_1\bigcup\Gamma_2\subset \Omega'\subset\subset M\setminus \Gamma_3$.
Set $\Omega_{i3}=\Omega_i\bigcup \Omega'$.
Then, $\Omega_{i3}\rightarrow M\setminus \Gamma_3$, and $\Omega_i\subseteq\Omega_{i3}$  for $i$ large.
Let $u_{i3}$ be the solution
of \eqref{eq-MEq} and \eqref{eq-MBoundary} in $\Omega_{i3}$.
By the maximum principle, we have
$u_{i3}>\underline{u}>0$ in  $\Omega_{i3}$. Then, for any $m$,
$$u_{i3}\rightarrow \widetilde{u}\quad\text{in }C^{m}_{\mathrm{loc}}( M \setminus\Gamma_3)
\text{ as }i\rightarrow\infty,$$
where $\widetilde{u}$ is a nonnegative solution of \eqref{eq-MEq} in $M \setminus\Gamma_3$.
Moreover, \begin{equation}\label{u widetildeu underlineu}u\geq\widetilde{u}\geq\underline{u}\end{equation}
 in $M \setminus\Gamma$.

For unification, we write  $N_0=\Gamma_1$.
For each $l\geq0$,
let $\Omega_{i}^{l}$ be a sequence of increasing smooth domains in $M \setminus N_{l}$,
which converges to $M \setminus N_{l}$,
and $u_{i}^{l}$ be the solution
of \eqref{eq-MEq} and \eqref{eq-MBoundary} in $\Omega_{i}^{l}$.

Take a fixed point $p\in M\setminus\overline{ \big(\bigcup(\bigcup_{l=0}^{\infty}N_{l})\bigcup\Gamma_3\big)}$,
and we will prove $u(p)=\widetilde{u}(p)$.
Then, we have $u=\widetilde{u}$ in $M\setminus \Gamma$.

First, we consider $S_g\geq0$.
By Lemma \ref{lemme-convergence1}, for any fixed $l\geq0$,
we have $u_{i}^{l}\rightarrow0$
uniformly in any compact subsets of $M\setminus N_l$
as $i\rightarrow\infty$. Then, for any $\epsilon>0$, there exist  an integer
$i(l,\epsilon)$ such that $u_{i(l,\epsilon)}^l(p)<\epsilon/2^{l+1}$ and an integer $i(\epsilon)$
such that $u_{i(\epsilon)3}(p)<\widetilde{u}(p)+\epsilon$.
Note that both $\Gamma_1$ and $\Gamma_2$ are closed
and $\bigcup_{l=0}^{\infty}M\setminus \overline{\Omega_{i(l,\epsilon)}^{l} }$ covers $\Gamma_1\bigcup\Gamma_2$.
Then, $\Gamma_1\bigcup\Gamma_2$ has a finite subcover, given by
$M\setminus \overline{\Omega_{i(0,\epsilon)}^{0} }$,
$M\setminus \overline{\Omega_{i(l_1,\epsilon)}^{l_1} }$,
$\cdots$, and $M\setminus \overline{\Omega_{i(l_k,\epsilon)}^{l_k} }$.
It is easy to verify that
$$u_{i(0,\epsilon)}^0+\sum_{j=1}^ku_{i(l_j,\epsilon)}^{l_j}+u_{i(\epsilon)3}$$
is a supersolution of \eqref{eq-MEq}
in $\Omega_{i(0,\epsilon)}^0\bigcap\big(\bigcap_{j=1}^{k}\Omega_{i(l_j,\epsilon)}^{l_j}\big)\bigcap\Omega_{i(\epsilon)3}$.
Then by the maximum principle, we have $u(p)<\widetilde{u}(p)+2\epsilon$.
Combining with \eqref{u widetildeu underlineu}, we have $u(p)=\widetilde{u}(p)$.

Next, we consider $S_g<0$. Without loss of generality, we assume $S_g=-n(n-1)$.
By Lemma \ref{lemme-convergence1}, for any fixed $l\geq0$,
we have $u_{i}^{l}\geq 1$ and $u_{i}^{l}\rightarrow1$
uniformly in any compact subsets of $M\setminus N_l$
as $i\rightarrow\infty$. We also have $\widetilde{u}\geq 1$.
Then, for any $\epsilon>0$, there exist an integer $i(l,\epsilon)$ such that
$u_{i(l,\epsilon)}^l(p)<1+\epsilon/2^{l+1}$ and an integer $i(\epsilon)$ such that
$u_{i(\epsilon)3}(p)<\widetilde{u}(p)+\epsilon$.
Note that both $\Gamma_1$ and $\Gamma_2$ are closed
and $\bigcup_{l=0}^{\infty}M\setminus \overline{\Omega_{i(l,\epsilon)}^{l} }$ covers $\Gamma_1\bigcup\Gamma_2$.
Then, $\Gamma_1\bigcup\Gamma_2$ has a finite subcover, given by
$M\setminus \overline{\Omega_{i(0,\epsilon)}^{0} }$,
$M\setminus \overline{\Omega_{i(l_1,\epsilon)}^{l_1} }$,
$\cdots$, $M\setminus \overline{\Omega_{i(l_k,\epsilon)}^{l_k} }$.
We can verify that
$$(u_{i(0,\epsilon)}^0-1)+\sum_{j=1}^k(u_{i(l_j,\epsilon)}^{l_j}-1)+u_{i(\epsilon)3}$$
is a supersolution of \eqref{eq-MEq}
in $\Omega_{i(0,\epsilon)}^0\bigcap\big(\bigcap_{j=1}^{k}\Omega_{i(l_j,\epsilon)}^{l_j}\big)\bigcap\Omega_{i(\epsilon)3}$.
Then by the maximum principle, we have $u(p)<\widetilde{u}(p)+2\epsilon$.
Combining with \eqref{u widetildeu underlineu}, we have $u(p)=\widetilde{u}(p)$.
\end{proof}

\section{General Blow-up Results}\label{sec-general result}

In this section,  we study blow-up phenomena if the limit set $\Gamma$ has a compact set of a lower Hausdorff dimension.
We will prove
Theorem \ref{blow up set-nonempty-high} and
Theorem \ref{blow up set-empty-high}(1)-(2). We also prove that all accumulation points of the limit set
$\Gamma$ are in the blow-up sets.

 We first prove some preliminary results.
Let $(M, g)$, $\Gamma$, $\{\Omega_i\}$, and $\{g_i\}$ be as in Assumption \ref{assumption-basic}.
For each $i$, consider the solution $u_i$ of
\begin{align}
\label{eq-MEq-i} \Delta_{g} u_i -\frac{n-2}{4(n-1)}S_gu_i&= \frac14n(n-2) u_i^{\frac{n+2}{n-2}} \quad\text{in }\,\Omega_i,\\
\label{eq-MBoundary-i}u_i&=\infty\quad\text{on }\partial \Omega_i,
\end{align}
and set
\begin{equation}\label{eq-def-v-i}
v_i=u_i^{-\frac{2}{n-2}}.\end{equation}
Then,
$$g_{i}=u_i^{\frac{4}{n-2}}g=v_i^{-2}g.$$
By \eqref{Ricci cur  in v Manifold}, the Ricci curvature of $g_i$ is given by
\begin{equation}\label{Ricci-cur-in-v-Manifold}
{R}^i_{kl}=v_i^2\Big[R_{kl}-\frac{1}{2(n-1)}g_{kl}S_g\Big]+(n-2)v_iv_{i,kl}-\frac{n-2}{2}g_{kl}|\nabla_gv_i|^2-\frac{n}{2}g_{kl}.
\end{equation}
To study whether $R^i_{kl}$ blows up or stays bounded, it is important to estimate $\nabla^2_g v_i$, related to
$v_i$ and $\nabla_gv_i$.

\begin{lemma}\label{lemma-blowup-set1}
Let $(M, g)$, $\Gamma$, $\{\Omega_i\}$, and $\{g_i\}$ be as in Assumption \ref{assumption-basic},
$u_i$ and $v_i$ as in \eqref{eq-MEq-i}-\eqref{eq-MBoundary-i} and \eqref{eq-def-v-i}, and $x_0\in \Gamma$.
For some $r>0$, assume, for any $m$,
\begin{equation}\label{eq-v convergence-v0}
v_i \rightarrow
v \quad\text{in }C^{m}_{\mathrm{loc}}(B_r(x_0) \setminus\Gamma)
\text{ as }i\rightarrow\infty,\end{equation}
for some positive smooth function $v$  in $B_r(x_0)$.
Then, $x_0\in \mathcal{B}_{\Gamma,\{\Omega_i\}}$.
\end{lemma}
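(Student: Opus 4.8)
The plan is to exhibit $x_0$ as a blow-up point directly, by producing points $z_m\to x_0$, together with indices $i_m$, at which a single Ricci component of $g_{i_m}$ diverges. Once such $z_m$ are found, setting $x_m=z_m\in\Omega_{i_m}\subset M\setminus\Gamma$ gives $\sup_i|Ric_{g_i}(x_m)|\ge|Ric_{g_{i_m}}(z_m)|\to\infty$ with $x_m\to x_0$, which is exactly the condition for $x_0\in\mathcal{B}_{\Gamma,\{\Omega_i\}}$. The point $z_m$ will be chosen to be a (localized) interior maximum of $|\nabla_g v_i|^2$, where the gradient of $v_i$ is forced to blow up and where, crucially, the Hessian term in the Ricci formula drops out.

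The identity that does the work is the contraction of \eqref{Ricci-cur-in-v-Manifold} in the gradient direction. At a point with $\nabla_g v_i\neq0$ put $e=\nabla_g v_i/|\nabla_g v_i|$, a $g$-unit vector, so that $v_ie$ is $g_i$-unit; then
\begin{equation*}
Ric_{g_i}(v_ie,v_ie)=v_i^2\Big[Ric_g(e,e)-\tfrac{S_g}{2(n-1)}\Big]+(n-2)v_i\,\nabla^2_gv_i(e,e)-\tfrac{n-2}{2}|\nabla_gv_i|^2-\tfrac n2 .
\end{equation*}
If $x$ is an interior critical point of $|\nabla_g v_i|^2$, then $\nabla^2_g v_i(\nabla_g v_i,\cdot)=0$, hence $\nabla^2_g v_i(e,e)=0$ and the Hessian term vanishes. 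Moreover, since the $u_i$ decrease to $u$ (as in Lemmas \ref{lemme-convergence1}--\ref{lemme-convergence2}) we have $v_i=u_i^{-2/(n-2)}\le u^{-2/(n-2)}=v$ on $B_r(x_0)$, so $v_i$ is bounded there and the bracket term is $O(1)$. Thus at an interior maximum of $|\nabla_g v_i|^2$ the above reduces to $Ric_{g_i}(v_ie,v_ie)=O(1)-\tfrac{n-2}{2}|\nabla_g v_i|^2$, which is as negative as $|\nabla_g v_i|^2$ is large.

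It remains to show that $|\nabla_g v_i|$ blows up arbitrarily near $x_0$: for every $\rho\in(0,r)$ I claim $\sup_{B_\rho(x_0)\cap\Omega_i}|\nabla_g v_i|\to\infty$. Suppose not; then, along a subsequence, $|\nabla_g v_i|\le M$ on $B_\rho(x_0)\cap\Omega_i$. Because $\partial\Omega_i\to\Gamma\ni x_0$ while $x_0\notin\Omega_i$, a segment from any nearby interior point toward $x_0$ must cross $\partial\Omega_i$, so there exist $y_i\in\partial\Omega_i$ with $y_i\to x_0$. Fix $p\in B_\rho(x_0)\setminus\Gamma$ at $g$-distance $\eta$ from $x_0$; for large $i$, $p\in\Omega_i$, and since $v_i=0$ on $\partial\Omega_i$ and $|\nabla_g v_i|\le M$ near $x_0$, integrating along a minimizing geodesic to $\partial\Omega_i$ gives $v_i(p)\le M\,d_g(p,\partial\Omega_i)\le M(\eta+d_g(x_0,y_i))$. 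Letting $i\to\infty$ yields $v(p)\le M\eta$, and then $\eta\to0$ forces $v(x_0)\le0$, contradicting $v(x_0)>0$. This proves the claim, and it is precisely the positivity $v(x_0)>0$ that localizes the gradient blow-up at $x_0$.

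Finally I assemble the sequence. For each $m$, fix a cutoff $\chi_m$ that is $1$ on $B_{1/(2m)}(x_0)$ and vanishes outside $B_{1/m}(x_0)$, and choose $i_m$ so large that $\sup_{B_{1/(2m)}(x_0)\cap\Omega_{i_m}}|\nabla_g v_{i_m}|^2>m$. The maximum of $\chi_m^2|\nabla_g v_{i_m}|^2$ over $\overline{B_{1/m}(x_0)}\cap\overline{\Omega_{i_m}}$ then exceeds $m$; it is not attained on $\partial\Omega_{i_m}$ (where $|\nabla_g v_{i_m}|=1$) nor on $\partial B_{1/m}$ (where $\chi_m=0$), so it is attained at an interior point $z_m\in B_{1/m}(x_0)\cap\Omega_{i_m}$. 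A routine computation at this weighted maximum shows $\nabla^2_g v_{i_m}(e,e)=-\chi_m^{-1}\langle\nabla\chi_m,\nabla_g v_{i_m}\rangle$, so the Hessian term in the reduced identity is only $O(|\nabla_g v_{i_m}|)$ and is dominated by $-\tfrac{n-2}{2}|\nabla_g v_{i_m}|^2$; hence $Ric_{g_{i_m}}(v_{i_m}e,v_{i_m}e)(z_m)\to-\infty$, giving $|Ric_{g_{i_m}}(z_m)|\to\infty$ while $z_m\to x_0$. The main obstacle throughout is the cancellation built into \eqref{Ricci-cur-in-v-Manifold}: for the exact model solutions (e.g. the complete metric on the exterior of a ball) the Hessian, gradient, and zeroth-order terms are each large yet combine to bounded Ricci curvature, so a crude size estimate cannot detect blow-up; evaluating at a maximum of $|\nabla_g v_i|^2$, where the Hessian term is killed, is exactly what breaks the cancellation and isolates the divergent gradient term.
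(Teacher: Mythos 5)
Your strategy is genuinely different from the paper's: the paper works one-dimensionally along a short geodesic emanating from $x_0$, maximizing the single directional derivative $\partial_n v_i$ over the segment (the boundary condition $|\nabla_g v_i|=1$ on $\partial\Omega_i$ bounds one endpoint, interior convergence bounds the other, and the mean value theorem forces a large interior maximum where $\partial_{nn}v_i=0$), whereas you maximize the full gradient with a cutoff. Your preparatory steps are correct: the contracted form of \eqref{Ricci-cur-in-v-Manifold}, the annihilation of $\nabla^2_g v_i(e,e)$ at interior critical points of $|\nabla_g v_i|^2$, the bound $v_i\le v$ from monotonicity of $u_i$, and the localized gradient blow-up $\sup_{B_\rho(x_0)\cap\Omega_i}|\nabla_g v_i|\to\infty$ (your integration argument from $v_i=0$ on $\partial\Omega_i$ to the positivity of $v$ at $x_0$ is fine).

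The final assembly, however, has a genuine quantitative gap: the threshold $m$ and the cutoff scale $1/m$ are incompatible. At the maximum point $z_m$ of $\chi_m^2|\nabla_g v_{i_m}|^2$ you know only $\chi_m(z_m)|\nabla_g v_{i_m}(z_m)|>\sqrt m$, while a cutoff that passes from $1$ to $0$ between $B_{1/(2m)}(x_0)$ and $B_{1/m}(x_0)$ necessarily has $|\nabla\chi_m|\sim m$. Hence from your own identity $\nabla^2_g v_{i_m}(e,e)=-\chi_m^{-1}\langle\nabla\chi_m,\nabla_g v_{i_m}\rangle$, the best available bound on the Hessian term is
\begin{equation*}
(n-2)\,v_{i_m}\,|\nabla^2_g v_{i_m}(e,e)|
\le C\,m\,\chi_m^{-1}|\nabla_g v_{i_m}|
= C\,m\,\frac{|\nabla_g v_{i_m}|^2}{\chi_m|\nabla_g v_{i_m}|}
\le C\sqrt m\,|\nabla_g v_{i_m}|^2,
\end{equation*}
which is \emph{not} $O(|\nabla_g v_{i_m}|)$; your claim to that effect implicitly treats $\chi_m^{-1}|\nabla\chi_m|$ as bounded, but $\chi_m(z_m)$ has no positive lower bound, since the weighted maximum may occur near the edge of the support of $\chi_m$ where $|\nabla_g v_{i_m}|$ is huge. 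Since $C\sqrt m\,|\nabla_g v_{i_m}|^2$ dominates, rather than is dominated by, $\tfrac{n-2}{2}|\nabla_g v_{i_m}|^2$, no conclusion about $Ric_{g_{i_m}}(z_m)$ follows. The repair is easy and stays inside your framework: either (i) use your blow-up claim to choose $i_m$ with $\sup_{B_{1/(2m)}(x_0)\cap\Omega_{i_m}}|\nabla_g v_{i_m}|^2>m^4$, so that $\chi_m|\nabla_g v_{i_m}|>m^2$ at $z_m$ and the Hessian term is at most $(C/m)|\nabla_g v_{i_m}|^2$, which is then genuinely dominated; or (ii) decouple the scales, fixing the cutoff radius $\rho$ (so $|\nabla\chi|\le C(\rho)$ is independent of $i$), letting $i\to\infty$ to produce points of $B_\rho(x_0)$ where a Ricci component tends to $-\infty$, and then diagonalizing over $\rho\to 0$. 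With either modification your proof is complete; it is precisely this cutoff error term that the paper's one-dimensional argument avoids, since there the localization is provided by the two endpoint bounds rather than by a cutoff function.
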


We point out that the convergence in \eqref{eq-v convergence-v0} is away from $\Gamma$
but $v$ is a positive smooth function in the entire $B_r(x_0)$.

\begin{proof}
For any sufficiently small $\epsilon>0$, we choose normal coordinates in a small neighborhood of
$x_0$ such that $x_0=0$
and the line segment $\{te_n;\, t\in [0,\epsilon]\}$ on the $x_n$-axis is a geodesic connecting $x_0$
and $\epsilon e_n\in \Omega_i$ for $i$ large. Here, $e_n=(0,...,0,1)$.
Take $t^*_i\in (0,\epsilon)$ such that $t^*_ie_n\in\partial\Omega_i$ and
$te_n \in \Omega_i$ for any $t\in (t_i^*, \epsilon]$.
By the polyhomogeneous expansion of $v_i$, we have
$$|\partial_n v_i(t^*_ie_n)|\leq 1.$$
By \eqref{eq-v convergence-v0},
\begin{equation}\label{eq-compare-vi}|\partial_nv_i(\epsilon e_n )|\leq |\partial_nv(\epsilon e_n )|+1\quad\text{for $i$ large}.
\end{equation}
For $\epsilon$ small and $i$ large, we take ${t}_i \in [t^*_i,\epsilon]$ such that, for any $t\in [t^*_i,\epsilon]$,
$$\partial_nv_i(te_n)\leq \partial_nv_i({t}_ie_n).$$
By $v_i(t^*_ie_n)=0$ and $v_i(\epsilon e_n)\to v(\epsilon e_n)$ as $i\to\infty$, we have
$$\partial_nv_i({t}_ie_n)>\frac{v_i(\epsilon e_n)-0}{\epsilon-t^*_i}>
\frac{1}{2\epsilon}v(\epsilon e_n).$$
Note that $v$ is smooth and positive near $x_0=0$.
In particular, $v(\epsilon e_n)$ has a positive lower bound independent of $\epsilon$.
Thus, ${t}_i \in (t^*_i,\epsilon)$, and hence
$$\partial_{nn}v_i({ t}_ie_n)=0.$$
Denote by $R^i_{nn}$ the Ricci curvature of $g_i$
acting on the unit vector $v_i \partial_n$ with respect to the metric $g_i$.
Substituting into \eqref{Ricci-cur-in-v-Manifold}, at the point ${t}_ie_n$, we can verify
$R_{nn}^i\le -C\epsilon^{-2}$, for all large $i$ and some positive constant $C$
independent of $i$ and $\epsilon$.
By choosing a sequence $\epsilon_k\to 0$,
we have $x_0\in \mathcal{B}_{\Gamma,\{\Omega_i\}}$.
\end{proof}

 More generally, we have the following result.

\begin{lemma}\label{lemma-blowup-set2}
Let $(M, g)$, $\Gamma$, $\{\Omega_i\}$, and $\{g_i\}$ be as in Assumption \ref{assumption-basic},
$u_i$ and $v_i$ as in \eqref{eq-MEq-i}-\eqref{eq-MBoundary-i} and \eqref{eq-def-v-i}, and $x_0\in \Gamma$.
For any sufficiently small $\varepsilon>0$ and any large $i$,
assume there exist a point $x_{\varepsilon i}\in B_{\varepsilon}(x_0)\cap \Omega_i$
and a unit vector $\nu_{\varepsilon i}$ (with respect to the metric $g$)
such that
\begin{equation}\label{eq-v_i-first-derivative-infty0}
|\nabla_gv_i(x_{\varepsilon i})|\to\infty  \quad\text{as }i\rightarrow\infty,\end{equation}
and
\begin{equation}\label{eq-v_icontrol0}
v_i(x_{\varepsilon i})+|\partial_{\nu_{\varepsilon i}\nu_{\varepsilon i}}v_i(x_{\varepsilon i})|
\le \varepsilon |\nabla_gv_i(x_{\varepsilon i})|.\end{equation}
Then, $x_0\in \mathcal{B}_{\Gamma,\{\Omega_i\}}$.
\end{lemma}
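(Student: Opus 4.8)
The plan is to show that at each point $x_{\varepsilon i}$ the Ricci curvature of $g_i$, evaluated on the $g_i$-unit vector corresponding to $\nu_{\varepsilon i}$, diverges to $-\infty$ as $i\to\infty$, and then to extract from the doubly-indexed family $\{x_{\varepsilon i}\}$ a single sequence converging to $x_0$ along which the Ricci norm blows up. First I would fix a small $\varepsilon>0$ (to be constrained below), write $\nu=\nu_{\varepsilon i}$, and choose a $g$-orthonormal frame having $\nu$ as one of its vectors. Since $v_i\nu$ is a $g_i$-unit vector, evaluating \eqref{Ricci-cur-in-v-Manifold} in the $\nu\nu$ slot gives $Ric_{g_i}(v_i\nu,v_i\nu)=R^i_{\nu\nu}$, where
$$R^i_{\nu\nu}=v_i^2\Big[R_{\nu\nu}-\frac{1}{2(n-1)}S_g\Big]+(n-2)v_iv_{i,\nu\nu}-\frac{n-2}{2}|\nabla_gv_i|^2-\frac{n}{2},$$
all quantities being evaluated at $x_{\varepsilon i}$.

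Next I would estimate the right-hand side using the hypotheses. Abbreviating $G_i=|\nabla_gv_i(x_{\varepsilon i})|$, the control \eqref{eq-v_icontrol0} gives $v_i(x_{\varepsilon i})\le\varepsilon G_i$ and $|v_{i,\nu\nu}(x_{\varepsilon i})|\le\varepsilon G_i$ (reading $\partial_{\nu\nu}$ as the covariant Hessian; the coordinate second derivative differs from it only by first-order terms bounded by $C|\nabla_gv_i|$, contributing at most $O(\varepsilon G_i^2)$ after multiplication by $v_i$, which is harmless). Because $(M,g)$ is smooth and compact, $|R_{\nu\nu}|$ and $|S_g|$ are bounded by a constant $C_0$ depending only on $(M,g)$. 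Hence the first term is $\le C_0\varepsilon^2G_i^2$, the Hessian term is $\le(n-2)\varepsilon^2G_i^2$, and the gradient term is exactly $-\frac{n-2}{2}G_i^2$, so that
$$R^i_{\nu\nu}\le\Big[(C_0+n-2)\varepsilon^2-\frac{n-2}{2}\Big]G_i^2-\frac{n}{2}.$$
Choosing $\varepsilon$ small enough that $(C_0+n-2)\varepsilon^2\le\frac{n-2}{4}$ yields $R^i_{\nu\nu}\le-\frac{n-2}{4}G_i^2$, and therefore $|Ric_{g_i}(x_{\varepsilon i})|\ge|R^i_{\nu\nu}|\ge\frac{n-2}{4}G_i^2$. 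By \eqref{eq-v_i-first-derivative-infty0}, for this fixed $\varepsilon$ we have $G_i\to\infty$, so $|Ric_{g_i}(x_{\varepsilon i})|\to\infty$ as $i\to\infty$.

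Finally I would produce the required sequence by a diagonal selection. Take $\varepsilon_k\to0$ with each $\varepsilon_k$ below the threshold above, and for each $k$ choose $i_k$ (increasing in $k$) so large that $|Ric_{g_{i_k}}(x_{\varepsilon_k i_k})|\ge k$, which is possible by the previous paragraph. Setting $x_k=x_{\varepsilon_k i_k}$, we have $x_k\in B_{\varepsilon_k}(x_0)\cap\Omega_{i_k}\subset M\setminus\Gamma$, so $x_k\to x_0$, while $\sup_i|Ric_{g_i}(x_k)|\ge|Ric_{g_{i_k}}(x_k)|\ge k\to\infty$. Thus $\lim_{k\to\infty}\sup_i|Ric_{g_i}(x_k)|=\infty$, which exhibits $x_0\in\mathcal{B}_{\Gamma,\{\Omega_i\}}$.

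The crux of the argument, and the step I expect to require the most care, is the estimate showing that the negative gradient term $-\frac{n-2}{2}|\nabla_gv_i|^2$ dominates every other term in \eqref{Ricci-cur-in-v-Manifold}. This rests essentially on the product structure of hypothesis \eqref{eq-v_icontrol0}: controlling both $v_i$ and the normal Hessian by $\varepsilon|\nabla_gv_i|$ forces the competing terms $v_i^2(\cdots)$ and $v_iv_{i,\nu\nu}$ to be $O(\varepsilon^2|\nabla_gv_i|^2)$, genuinely of lower order in $\varepsilon$ than the gradient term, so that a single smallness condition on $\varepsilon$ suffices. The remaining subtleties—reconciling the coordinate second derivative with the covariant Hessian and keeping the background-curvature contributions uniformly bounded via compactness of $M$—are routine once this dominant balance is isolated.
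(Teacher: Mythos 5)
Your proposal is correct and follows essentially the same route as the paper: substitute the control \eqref{eq-v_icontrol0} into the Ricci formula \eqref{Ricci-cur-in-v-Manifold} so that the term $-\frac{n-2}{2}|\nabla_g v_i|^2$ dominates, conclude via \eqref{eq-v_i-first-derivative-infty0} that $R^i_{\nu_{\varepsilon i}\nu_{\varepsilon i}}\to-\infty$, and then take $\varepsilon_k\to 0$ with a diagonal selection. Your write-up merely supplies the quantitative details (the explicit smallness threshold on $\varepsilon$ and the covariant-versus-coordinate Hessian discrepancy) that the paper leaves implicit.
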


\begin{proof}
 Denote by $R^i_{\nu_{\varepsilon i}\nu_{\varepsilon i}}$ the Ricci curvature of $g_i$
acting on the unit vector ${v_i\nu_{\varepsilon i}}$ with respect to the metric $g_i$.
By substituting \eqref{eq-v_icontrol0} in \eqref{Ricci-cur-in-v-Manifold}
and making use of \eqref{eq-v_i-first-derivative-infty0}, we conclude that
the Ricci component $R^i_{\nu_{\varepsilon i}\nu_{\varepsilon i}}$ at $x_{\varepsilon i}$
diverges to $ -\infty$ as $i\rightarrow\infty$. Hence,  some Ricci components of $g_i$
at $x_{\varepsilon i}$ diverge to $\infty$ as $i\rightarrow\infty$.
By choosing a sequence $\epsilon_k\to 0$,
we have $x_0\in \mathcal{B}_{\Gamma,\{\Omega_i\}}$.
\end{proof}

 We are ready to study the blow-up sets.
We first consider the case that $\Gamma$ has a nonempty higher dimensional part.

\begin{theorem}\label{gemma3 nonempty}
Let $(M, g)$, $\Gamma$, and $\{\Omega_i\}$ be as in Assumption \ref{assumption-basic},
and $\Gamma=\Gamma_{1}\bigcup\Gamma_{2}\bigcup\Gamma_{3}$
for $\Gamma_1, \Gamma_2$, and $\Gamma_3$ as in \eqref{eq-def-Gamma1-2} and \eqref{eq-def-Gamma3},
with $(\Gamma_{1}\bigcup\Gamma_{2})\bigcap\Gamma_3=\emptyset$,
$\Gamma_{1}\bigcup\Gamma_2\neq\emptyset$, and $\Gamma_{3}\neq\emptyset$.
Then, $\Gamma_{1}\bigcup\Gamma_{2}\subseteq \mathcal{B}_{\Gamma,\{\Omega_i\}}\subseteq \Gamma$.
\end{theorem}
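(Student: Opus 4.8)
The plan is to prove the two inclusions $\Gamma_{1}\cup\Gamma_{2}\subseteq \mathcal{B}_{\Gamma,\{\Omega_i\}}$ and $\mathcal{B}_{\Gamma,\{\Omega_i\}}\subseteq \Gamma$ separately, and both will be read off from the convergence result Lemma~\ref{lemme-convergence2}. Because $\Gamma_3\neq\emptyset$, that lemma produces a limit $u_i\to u$ in $C^{m}_{\mathrm{loc}}(M\setminus\Gamma)$ for every $m$, where $u$ is positive and lies in $C^{\infty}(M\setminus\Gamma_3)$; equivalently $v_i\to v:=u^{-2/(n-2)}$, with $v$ positive and smooth \emph{across} the lower-dimensional parts $\Gamma_1\cup\Gamma_2$. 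The essential feature I will exploit is that Lemma~\ref{lemme-convergence2} simultaneously delivers $C^m_{\mathrm{loc}}$ convergence away from $\Gamma$ and positivity together with smoothness of the limit across $\Gamma_1\cup\Gamma_2$ --- this is the removable-singularity phenomenon for the parts of dimension at most $(n-2)/2$.

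For the inclusion $\mathcal{B}_{\Gamma,\{\Omega_i\}}\subseteq\Gamma$, I would fix $x_0\notin\Gamma$ and choose $r>0$ with $B_r(x_0)\cap\Gamma=\emptyset$. On $B_r(x_0)$ the convergence $v_i\to v$ holds in $C^m_{\mathrm{loc}}$ with $v$ smooth and positive, so each term on the right-hand side of the Ricci identity \eqref{Ricci-cur-in-v-Manifold} --- namely $v_i^2$, $v_iv_{i,kl}$ and $|\nabla_gv_i|^2$ --- converges locally uniformly, whence the components $R^i_{kl}$ converge and are uniformly bounded on $\overline{B_{r/2}(x_0)}$ for all large $i$. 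For the finitely many remaining indices, the points in question either lie outside $\Omega_i$ or lie in a compact region on which $R^i_{kl}$ is bounded, using that the sectional curvatures of $g_i$ tend to $-1$ near $\partial\Omega_i$. Thus $\sup_i|Ric_{g_i}|$ stays bounded along any sequence tending to $x_0$, so $x_0\notin\mathcal{B}_{\Gamma,\{\Omega_i\}}$.

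For the inclusion $\Gamma_1\cup\Gamma_2\subseteq\mathcal{B}_{\Gamma,\{\Omega_i\}}$, I would fix $x_0\in\Gamma_1\cup\Gamma_2$. Since $\Gamma_3$ is closed and disjoint from $\Gamma_1\cup\Gamma_2$, I can pick $r>0$ with $\overline{B_r(x_0)}\cap\Gamma_3=\emptyset$. Lemma~\ref{lemme-convergence2} then gives $v_i\to v$ in $C^m_{\mathrm{loc}}(B_r(x_0)\setminus\Gamma)$ with $v$ positive and smooth on all of $B_r(x_0)$, which is exactly the hypothesis of Lemma~\ref{lemma-blowup-set1}; its conclusion yields $x_0\in\mathcal{B}_{\Gamma,\{\Omega_i\}}$. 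Combining the two inclusions gives $\Gamma_1\cup\Gamma_2\subseteq\mathcal{B}_{\Gamma,\{\Omega_i\}}\subseteq\Gamma$.

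Because the substantive estimates are already packaged in Lemmas~\ref{lemme-convergence2} and~\ref{lemma-blowup-set1}, the theorem itself is largely an assembly and I do not anticipate a separate hard estimate. The point that must be handled with care --- and which I regard as the crux --- is verifying that the hypotheses of Lemma~\ref{lemma-blowup-set1} genuinely hold at every point of $\Gamma_1\cup\Gamma_2$: this reduces to the fact that Lemma~\ref{lemme-convergence2} supplies both the $C^m_{\mathrm{loc}}$ convergence of $v_i$ away from $\Gamma$ and the positivity and smoothness of the limit $v$ across $\Gamma_1\cup\Gamma_2$, the latter relying on $\Gamma_3\neq\emptyset$ to keep the limit bounded below. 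The geometric mechanism driving the blow-up lives inside Lemma~\ref{lemma-blowup-set1}: the positivity and smoothness of $v$ at $x_0$ force $\partial\Omega_i$ to be pinched against a region where $v_i$ is bounded below, producing an interior point at which $\partial_{nn}v_i$ vanishes while $\partial_nv_i$ stays bounded away from zero, and \eqref{Ricci-cur-in-v-Manifold} then drives a Ricci component to $-\infty$.
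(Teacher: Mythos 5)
Your proposal is correct and takes essentially the same route as the paper: Lemma \ref{lemme-convergence2} supplies $v_i\to v$ with $v$ positive and smooth across $\Gamma_1\cup\Gamma_2$ (thanks to $\Gamma_3\neq\emptyset$), the formula \eqref{Ricci-cur-in-v-Manifold} then yields $\mathcal{B}_{\Gamma,\{\Omega_i\}}\subseteq\Gamma$, and Lemma \ref{lemma-blowup-set1} yields $\Gamma_1\cup\Gamma_2\subseteq\mathcal{B}_{\Gamma,\{\Omega_i\}}$. Your additional care with the finitely many indices $i$ before the convergence kicks in (each fixed $g_i$ has bounded Ricci curvature, being asymptotically hyperbolic near $\partial\Omega_i$) merely spells out what the paper dismisses as ``straightforward to verify.''
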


\begin{proof}
Let  $u_i$ be the solution
of \eqref{eq-MEq-i} and \eqref{eq-MBoundary-i} in $\Omega_i$ and set $v_i$ by \eqref{eq-def-v-i}.
By Lemma \ref{lemme-convergence2}, for any $m$,
$$u_i\rightarrow u
\quad\text{in }C^{m}_{\mathrm{loc}}( M \setminus\Gamma)
\text{ as }i\rightarrow\infty,$$
where $u$ is a positive smooth function in $M \setminus\Gamma_3$. Hence,
\begin{equation}\label{eq-v convergence-v1}
v_i \rightarrow
v:=u^{-\frac{2}{n-2}} \quad\text{in }C^{m}_{\mathrm{loc}}( M \setminus\Gamma)
\text{ as }i\rightarrow\infty.\end{equation}
Note that $v$ is a positive smooth function in $M \setminus\Gamma_3$, and,
in particular, near $\Gamma_1\bigcup\Gamma_2$.
By \eqref{Ricci-cur-in-v-Manifold} and \eqref{eq-v convergence-v1},
it is straightforward to verify
that the Ricci curvature of $g_i$ remains bounded uniformly for large $i$
near any point in $M\setminus\Gamma$; namely, $\mathcal{B}_{\Gamma,\{\Omega_i\}}\subseteq\Gamma$.
 By Lemma \ref{lemma-blowup-set1}, we have $x_0\in \mathcal{B}_{\Gamma,\{\Omega_i\}}$ for any
$x_0\in \Gamma_{1}\bigcup\Gamma_{2}$,
and hence $\Gamma_{1}\bigcup\Gamma_{2}\subseteq \mathcal{B}_{\Gamma,\{\Omega_i\}}$.
\end{proof}

We next study the case that the higher dimensional part is absent from $\Gamma$, i.e., $\Gamma_3=\emptyset$.
We first consider the case that the Yamabe invariant $\lambda(M,[g])$ of $(M, g)$ is nonpositive.
For the case $\lambda(M,[g])=0$,  set
\begin{equation}\label{eq-definition-S-NRF}
\mathcal{S}_g=\text{clos}\{x|x\in M,  Ric_{\bar{g}}(x)\neq0 \text{ for some }\bar{g}\in [g]\text{ with }R_{\bar{g}}=0\},
\end{equation}
where $R_{\bar{g}}$ is the scalar curvature of $\bar{g}$.
The set  $\mathcal{S}_g$ is closed and  well-defined since any two conformal metrics of $g$
on $M$ with zero scalar curvatures differ from each other by a  positive constant factor.

\begin{theorem}\label{nonpositive yam inv}
Let $(M, g)$, $\Gamma$, and $\{\Omega_i\}$ be as in Assumption \ref{assumption-basic},
$\lambda(M,[g])$ be the Yamabe invariant of $(M, g)$,
and $\Gamma=\Gamma_{1}\bigcup\Gamma_{2}$
for $\Gamma_1$ and $\Gamma_2$ as in \eqref{eq-def-Gamma1-2}.

$\mathrm{(1)}$ If $\lambda(M,[g])<0$, then $\mathcal{B}_{\Gamma,\{\Omega_i\}}=\Gamma$.

$\mathrm{(2)}$ If $\lambda(M,[g])=0$, then
$\mathcal{B}_{\Gamma,\{\Omega_i\}}=\Gamma \bigcup \mathcal{S}_g$.
\end{theorem}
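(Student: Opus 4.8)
The plan is to exploit the conformal invariance of the construction: since each singular Yamabe metric $g_i$ depends only on the conformal class $[g]$ and the domain $\Omega_i$, the blow-up set $\mathcal{B}_{\Gamma,\{\Omega_i\}}$ is unchanged if $g$ is replaced by any metric in $[g]$. I will therefore pass to the Yamabe representative, whose constant scalar curvature has the same sign as $\lambda(M,[g])$. For (1) this gives a representative with $S_g<0$ constant; by Lemma \ref{lemme-convergence1}, $u_i$ converges in $C^m_{\mathrm{loc}}(M\setminus\Gamma)$ to a positive constant, so $v_i\to v_0$ for a positive constant $v_0$, together with all derivatives. Feeding $v_i\to v_0$ and $\nabla_g v_i,\nabla^2_g v_i\to 0$ into \eqref{Ricci-cur-in-v-Manifold} shows that, away from $\Gamma$, the components $R^i_{kl}$ converge to those of the bounded limit metric $v_0^{-2}g$, so $\mathcal{B}_{\Gamma,\{\Omega_i\}}\subseteq\Gamma$; the reverse inclusion is immediate from Lemma \ref{lemma-blowup-set1}, whose hypothesis ($v_i\to v_0>0$ smooth) holds here. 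Hence $\mathcal{B}_{\Gamma,\{\Omega_i\}}=\Gamma$.

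For (2) I choose the scalar-flat representative $\bar g$ ($S_{\bar g}=0$), which exists and is unique up to a positive constant because $\lambda(M,[g])=0$; this is the metric entering \eqref{eq-definition-S-NRF}. Now Lemma \ref{lemme-convergence1} gives $u_i\to 0$, so Lemma \ref{lemma-blowup-set1} does not apply directly and I must first identify the correct rescaling. I would set $a_i=u_i(p)\to 0$ for a fixed $p\in M\setminus\Gamma$ and $\tilde u_i=u_i/a_i$, so that $\tilde u_i(p)=1$ and $\Delta_{\bar g}\tilde u_i=\tfrac14 n(n-2)\,a_i^{4/(n-2)}\tilde u_i^{(n+2)/(n-2)}$. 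The Harnack inequality bounds $\tilde u_i$ on compact subsets of $M\setminus\Gamma$, and since the inhomogeneity carries the vanishing factor $a_i^{4/(n-2)}$, any subsequential $C^m_{\mathrm{loc}}$ limit $\tilde u$ is a positive $\bar g$-harmonic function on $M\setminus\Gamma$. Because $\dim_H\Gamma\le(n-2)/2<n-2$, the set $\Gamma$ is polar, so $\tilde u$ extends to a positive superharmonic function on the closed manifold $M$; integrating its Laplacian then forces $\tilde u$ to be harmonic on all of $M$, hence constant, hence $\equiv 1$. Thus $\tilde u_i\to 1$ in $C^m_{\mathrm{loc}}(M\setminus\Gamma)$ along the full sequence.

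With this convergence in hand I would establish three inclusions. The containment $\Gamma\subseteq\mathcal{B}_{\Gamma,\{\Omega_i\}}$ follows by repeating the geodesic/inflection argument of Lemma \ref{lemma-blowup-set1}: on a short geodesic issuing from $x_0\in\Gamma$ the function $v_i$ rises from $0$ on $\partial\Omega_i$ to the value $v_i(\varepsilon e_n)\to\infty$, so at an interior point where $\partial_{nn}v_i=0$ one has $|\nabla_g v_i|^2\gtrsim v_i(\varepsilon e_n)^2/\varepsilon^2$, which dominates $v_i^2|R_{kl}|$ once $\varepsilon$ is small and drives $R^i_{nn}\to-\infty$. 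For $\mathcal{S}_g\subseteq\mathcal{B}_{\Gamma,\{\Omega_i\}}$, I write the trace-free Ricci of $g_i$ in the $g_i$-frame as $\mathring R^i_{kl}=v_i^2\,\mathring R^{g_i}_{kl}$ and use the conformal transformation law: since $\tilde u_i\to 1$ in $C^2$, the conformal correction terms vanish in the limit and $\mathring R^{g_i}_{kl}\to Ric_{\bar g}$. At a point with $Ric_{\bar g}(x_0)\neq 0$ this is a nonzero limit multiplied by $v_i^2\to\infty$, so some Ricci component blows up; as $\mathcal{B}_{\Gamma,\{\Omega_i\}}$ is closed it contains the closure $\mathcal{S}_g$. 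This direction needs only the qualitative convergence $\tilde u_i\to 1$, with no rate.

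The reverse inclusion $\mathcal{B}_{\Gamma,\{\Omega_i\}}\subseteq\Gamma\cup\mathcal{S}_g$ is where the hard part lies. At $x_0\notin\Gamma\cup\mathcal{S}_g$ the metric $\bar g$ is Ricci-flat on a neighborhood $U\Subset M\setminus\Gamma$, so \eqref{Ricci-cur-in-v-Manifold} reduces to $R^i_{kl}=(n-2)v_iv_{i,kl}-\tfrac{n-2}{2}g_{kl}|\nabla_g v_i|^2-\tfrac n2 g_{kl}$, and boundedness of $R^i_{kl}$ on $U$ is equivalent to the \emph{sharp} decay estimates $|\nabla_g\tilde u_i|=O(a_i^{2/(n-2)})$ and $|\nabla^2_g\tilde u_i|=O(a_i^{4/(n-2)})$. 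Since $\Delta_{\bar g}\tilde u_i=O(a_i^{4/(n-2)})$, Schauder estimates reduce these in turn to the quantitative $C^0$ rate $\|\tilde u_i-1\|_{C^0(U)}=O(a_i^{4/(n-2)})$, and this is the crux of the whole theorem: the qualitative limit $\tilde u_i\to 1$ is useless here because it is multiplied by the diverging factor $v_i^2=a_i^{-4/(n-2)}\tilde u_i^{-4/(n-2)}$. I expect to obtain the $C^0$ rate by comparing $\tilde u_i$ with $1+a_i^{4/(n-2)}\Phi$, where $\Phi$ solves $\Delta_{\bar g}\Phi=\tfrac14 n(n-2)$ on $M\setminus\Gamma$ with a controlled singularity along $\Gamma$ — such $\Phi$ exists precisely because $\dim_H\Gamma<n-2$ lets $\Gamma$ absorb the flux $\tfrac14 n(n-2)\,\mathrm{Vol}(M)$ obstructed on the closed manifold — and then invoking the closed-manifold Liouville property to kill the harmonic remainder at the same order. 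Controlling this remainder uniformly and matching the normalization $\tilde u_i(p)=1$ is the principal technical obstacle; once it is settled, combining the three inclusions yields $\mathcal{B}_{\Gamma,\{\Omega_i\}}=\Gamma\cup\mathcal{S}_g$.
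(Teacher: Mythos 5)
Your part (1) is correct and is essentially the paper's own argument. In part (2), the normalization $\tilde u_i=u_i/u_i(p)$, the potential-theoretic identification $\tilde u_i\to 1$, the inclusion $\Gamma\subseteq \mathcal{B}_{\Gamma,\{\Omega_i\}}$ via the inflection-point argument (cf. Lemma \ref{lemma-blowup-set2}), and the inclusion $\mathcal{S}_g\subseteq \mathcal{B}_{\Gamma,\{\Omega_i\}}$ via tensorial convergence of $Ric_{g_i}$ (a nice observation: this direction indeed needs no rate) are all sound. The genuine gap is exactly where you locate it: the inclusion $\mathcal{B}_{\Gamma,\{\Omega_i\}}\subseteq \Gamma\cup\mathcal{S}_g$ is never proved. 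You correctly reduce it to the quantitative rate $\|\tilde u_i-1\|_{C^0(U)}=O\big(a_i^{4/(n-2)}\big)$, but then only sketch a plan and explicitly defer "the principal technical obstacle"; an argument whose core step is acknowledged to be unresolved is not a proof.

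Moreover, the sketched comparison is unlikely to close as stated. A maximum-principle comparison of $\tilde u_i$ with $1+a_i^{4/(n-2)}K\Phi$ must be run either on all of $\Omega_i$, where $\tilde u_i=\infty$ on $\partial\Omega_i$ and grows like $a_i^{-1}d(\cdot,\partial\Omega_i)^{-(n-2)/2}$, so no $i$-independent $\Phi$ can dominate it at order $a_i^{4/(n-2)}$; or on a sublevel set such as $\{\tilde u_i<2\}$, where one needs $a_i^{4/(n-2)}K\Phi\ge 1$ on its boundary, i.e., one must already know that $\{\tilde u_i\ge 2\}$ lies in a very small neighborhood of $\Gamma$ --- which is essentially the statement being proved, so the argument is circular. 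In addition, with normalization at a fixed point, $\tilde u_i-1$ has no sign, so the Harnack inequality cannot be applied to it. The paper's proof of Theorem \ref{nonpositive yam inv}(2) circumvents all of this by a different normalization: it sets $w_i=u_i/m_i$ with $m_i=\inf_{\Omega_i}u_i$, so that $w_i\ge 1$ everywhere and $w_i(p_i)=1$ at the global minimum point. Two short contradiction arguments then show that the sets $A_i=\{w_i<2\}$ and $B_i=\{w_i<1+Cm_i^{4/(n-2)}\}$ must intersect $B_r(x_0)$; the barriers used there have their singular part inside $B_r(x_0)$, which is disjoint from $A_i$ (resp.\ $B_i$) under the contradiction hypothesis, so they are $i$-independent. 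The Harnack inequality applied to the \emph{nonnegative} function $w_i-1$ then yields $0\le w_i-1\le Cm_i^{4/(n-2)}$ in $B_{2r}(x_0)$, and interior Schauder estimates give the derivative bounds \eqref{eq-zero-scalar}, hence boundedness of \eqref{Ricci-cur-in-v-Manifold} away from $\Gamma\cup\mathcal{S}_g$. To complete your write-up, replace the $\Phi$-comparison with this global-minimum normalization and the two-step barrier/Harnack argument.
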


\begin{proof}
By the solution of the Yamabe problem \cite{LeeParker1987}, we can assume the scalar curvature $S_g$
of $M$ is the constant $\lambda(M,[g])$.
Since $M$ is compact, we take $\Lambda>0$ such that
$$|R_{ij}|\leq \Lambda g_{ij}.$$
Let  $u_i$ be the solution
of \eqref{eq-MEq-i} and \eqref{eq-MBoundary-i} in $\Omega_i$ and set $v_i$ by \eqref{eq-def-v-i}.
We now discuss two cases, $S_g<0$ and $S_g=0$.

\smallskip
{\it Case 1}: $S_g<0$.
By Lemma \ref{lemme-convergence1}, for any $m$,
$$u_i\rightarrow \Big(\frac{-S_g}{n(n-1)}\Big)^{\frac{n-2}{4}}
\quad\text{in }C^{m}_{\mathrm{loc}}( M \setminus\Gamma)
\text{ as }i\rightarrow\infty,$$
and hence
\begin{equation}\label{eq-v convergence-v2}
v_i \rightarrow
\Big(\frac{-S_g }{n(n-1)}\Big)^{-\frac{1}{2}} \quad\text{in }C^{m}_{\mathrm{loc}}( M \setminus\Gamma)
\text{ as }i\rightarrow\infty.\end{equation}
By \eqref{Ricci-cur-in-v-Manifold} and \eqref{eq-v convergence-v2},
it is straightforward to verify $\mathcal{B}_{\Gamma,\{\Omega_i\}}\subseteq\Gamma$.
We can proceed as in the proof of Theorem \ref{gemma3 nonempty}
to obtain $\Gamma\subseteq \mathcal{B}_{\Gamma,\{\Omega_i\}}$.
Therefore,
$\mathcal{B}_{\Gamma,\{\Omega_i\}}=\Gamma.$

\smallskip

{\it Case 2}:  $S_g=0$. In this case, we have
for any $m$,
$$u_i\rightarrow 0\quad\text{in }C^{m}_{\mathrm{loc}}( M \setminus\Gamma)
\text{ as }i\rightarrow\infty,$$
and hence
\begin{equation*}
v_i\rightarrow \infty
\quad\text{uniformly in any compact subsets of $M \setminus\Gamma$
as }i\rightarrow\infty.\end{equation*}
By \eqref{eq-MEq-v}-\eqref{eq-MBoundary-v}, we have
\begin{align*}
\Delta_{g} v_i^2 &=(n+2)|\nabla_{g} v_i|^2-n\quad\text{in }\Omega_i, \\
v_i&=0\quad\text{on }\partial \Omega_i.
\end{align*}
Then,
\begin{equation*}%\label{gead v indentity}
\int_{\Omega_i}|\nabla_{g} v_i|^2dV_{g}=\frac{n}{n+2}\int_{\Omega_i}dV_{g}<\frac{n}{n+2}\int_{M}dV_{g},\end{equation*}
and hence
\begin{equation*}%\label{gead v upbound}
\int_{\Omega_i} u_i^{-\frac{2n}{n-2}}|\nabla_{g} u_i|^2dV_{g}<
\frac{n(n-2)^2}{4(n+2)}\int_{M}dV_{g}.\end{equation*}

Take any $\Omega''\subset\subset\Omega'\subset\subset M\setminus\Gamma$.
For $i$ sufficiently large, $\Omega'\subset\subset\Omega_i$.
By the Hanarck inequality, we have
\begin{equation*}
\sup_{\Omega'}u_i \leq C(\Omega') \inf_{\Omega'}u_i,\end{equation*}
where $C(\Omega')$ is a positive constant independent of $i$. Hence, fixing $p\in \Omega'$, we have
\begin{equation*}
\|\nabla_{g} u_i \|_{L^{2}(\Omega')}\leq C(\Omega')u_i^{\frac{n}{n-2}}(p).
\end{equation*}
For any $x\in \Omega''$, choosing normal coordinates near $x$, we have
\begin{equation*}\partial_{m}\big[\partial_k(\sqrt{\det g_{kl}}g^{kl}\partial_lu_i)\big]
=\partial_{m}\big[\frac{n(n-2)}{4}\sqrt{\det g_{kl}} u_i^{\frac{n+2}{n-2}}\big].
\end{equation*}
Then,
\begin{align*}\partial_k(\big[\sqrt{\det g_{kl}}g^{kl}\partial_l\big(\partial_m u_i\big)\big]
=&-\partial_k(\big[\partial_{m}(\sqrt{\det g_{kl}}g^{kl})\partial_lu_i\big]\\
&\qquad+\partial_{m}\big[\frac{n(n-2)}{4}\sqrt{\det g_{kl}} u_i^{\frac{n+2}{n-2}}\big].
\end{align*}
By interior $H^{k}$-estimates
and the Sobolev embedding, we have
\begin{equation*}
\|\nabla_{g} u_i \|_{L^{\infty}(\Omega'')}\leq C(\Omega',\Omega'')u_i^{\frac{n}{n-2}}(p).
\end{equation*}
For each $i$, we choose $p\in \Omega''$ such that $u_i(p)=\inf_{\Omega''}u_i$.
Hence,
\begin{equation}\label{in l infinity esti}
\|\nabla_{g} v_i \|_{L^{\infty}(\Omega'')}\leq C(\Omega',\Omega'').
\end{equation}
 In the following, we consider two cases.

{\it Case 2.1.} Take an arbitrary $x_0 \in \Gamma$ and we will prove $x_0\in \mathcal{B}_{\Gamma,\{\Omega_i\}}$.
For any sufficiently small $\epsilon>0$, we choose normal coordinates in a small neighborhood of
$x_0$ such that $x_0=0$
and the line segment $\{te_n;\, t\in [0,\epsilon]\}$ on the $x_n$-axis is a geodesic connecting $x_0$
and $\epsilon e_n\in \Omega_i$ for $i$ large.
Take $t^*_i\in (0,\epsilon)$ such that $t^*_ie_n\in\partial\Omega_i$ and
$te_n \in \Omega_i$ for any $t\in (t_i^*, \epsilon]$.
By \eqref{in l infinity esti},
\begin{equation}\partial_nv_i(\epsilon e_n)\leq C,\end{equation}
for some constant $C$ independent of $i$. This estimate plays the same role as \eqref{eq-compare-vi}.
Then, we take ${t}_i \in [t^*_i,\epsilon]$ such that, for any $t\in [t^*_i,\epsilon]$,
$$\partial_nv_i(te_n)\leq \partial_nv_i({t}_ie_n).$$
Hence,
\begin{align}\label{eq-v_i-first-derivative-infty}
\partial_nv_i({t}_ie_n)>\frac{v_i(\epsilon e_n)-0}{\epsilon-t^*_i}>
\frac1\epsilon v_i(\epsilon e_n)\rightarrow\infty\,\,\text{ as } i\rightarrow\infty.\end{align}
Thus, ${t}_i \in (t^*_i,\epsilon)$, and hence
\begin{align}\label{eq-v_i-second-deri-0}\partial_{nn}v_i({t}_ie_n)=0.\end{align}
We also have
\begin{equation}\label{eq-v_icontrol12}
|v_i({t}_ie_n)|\leq
\epsilon\partial_nv_i({t}_ie_n).\end{equation}
 In view of \eqref{eq-v_i-first-derivative-infty}, \eqref{eq-v_i-second-deri-0}, and \eqref{eq-v_icontrol12},
we can apply Lemma \ref{lemma-blowup-set2} to conclude
$x_0\in \mathcal{B}_{\Gamma,\{\Omega_i\}}$.

\smallskip
 {\it Case 2.2.} Take an arbitrary $x_0 \in M\setminus  \Gamma$ and
we will prove $x_0\in \mathcal{B}_{\Gamma,\{\Omega_i\}}$ if and only if $x_0\in\mathcal{S}_{g}$.
Consider $B_{4r}(x_0)\subset\subset M\setminus\Gamma$ for some $r>0$.
Then for $i$ sufficiently large, $B_{4r}(x_0)\subset\subset\Omega_i$.
 We claim
\begin{equation}\label{eq-zero-scalar}
|v_i\nabla^2_{g} v_i|\leq  C,\quad |\nabla_{g} v_i|^2\leq C  m_i^{\frac{4}{n-2}}
\quad\text{in }B_{r}(x_0).\end{equation}
Assuming the validity of \eqref{eq-zero-scalar},
by \eqref{Ricci cur  in v Manifold} and the fact that $ \mathcal{B}_{\Gamma,\{\Omega_i\}}$ is a closed set,
 for the chosen $x_0\in M\setminus\Gamma$, we conclude that
$x_0\in \mathcal{B}_{\Gamma,\{\Omega_i\}}$ if and only if there exists a sequence
$\{x_m\}$ in $M$ such that $x_m\to x_0$ and the Ricci curvature of $g$ does not vanish at each $x_m$.

We now prove \eqref{eq-zero-scalar}.
For each $i$, we choose $p_i\in \Omega_i$ such that $u_i(p_i)=\inf_{\Omega_i}u_i$ and write
$m_i=u_i(p_i)$. Then, $m_i\rightarrow0$ as $i\rightarrow\infty$.
Set $u_i=m_iw_i$. Hence, $w_i(p_i)=1$, $w_i\geq1$ in $\Omega_i$, and
\begin{equation*} \Delta_{g} w_i  =
\frac14n(n-2){m}_i^{\frac{4}{n-2}}w_i^{\frac{n+2}{n-2}}
\quad\text{in }{\Omega}_i.\end{equation*}
Set
$$A_i=\{x\in \Omega_i|\, w_i(x)<2\}.$$
We note that  $A_i\bigcap
B_{r}(x_0 ) \neq \emptyset$, for $i$ sufficiently large.
 We prove this by a contradiction argument.
If $A_i\bigcap B_{r}(x_0 )=\emptyset$, then $w_i=2$ on $\partial A_i$, and hence, by the maximum principle,
\begin{equation*}w_i(x) \geq 2-C_12^{\frac{n+2}{n-2}}m_i^{\frac{4}{n-2}}\quad\text{in }A_i,\end{equation*}
where $C_1$ is a positive constant depending only on $n$, $M$, and $r$,
independent of $i$.
We point out that a barrier function independent of $i$ can be constructed since
a common ball $B_{r}(x_0 )$ is in the complement of $A_i$ by the contradiction assumption.
Thus,
$$1 \geq 2-C_12^{\frac{n+2}{n-2}}m_i^{\frac{4}{n-2}}\quad\text{at }p_i.$$
This leads to a contradiction, since $m_i\rightarrow0$ as $i\rightarrow\infty$.
 Therefore, $w_i<2$ somewhere in $B_{r}(x_0 )$.

Applying the  Harnack inequality to the equation for $u_i$, we have $w_i\leq C_2$ in $B_{3r}(x_0)$.
Set
$$B_i=\{x\in \Omega_i|\, w_i(x)<1+2C_12^{\frac{n+2}{n-2}}m_i^{\frac{4}{n-2}}\}.$$
Then, $B_i\bigcap B_{r}(x_0 ) \neq \emptyset$ for $i$ sufficiently large.
If this is not true, by the maximum principle, we have
 \begin{equation*}w_i(x) \geq 1+2C_12^{\frac{n+2}{n-2}}m_i^{\frac{4}{n-2}}-C_12^{\frac{n+2}{n-2}}m_i^{\frac{4}{n-2}}\quad\text{in }B_i.\end{equation*}
Thus,
$$1 \geq 1+C_12^{\frac{n+2}{n-2}}m_i^{\frac{4}{n-2}}\quad\text{at }p_i,$$
which leads to a contradiction. Note that $w_i-1$ is a nonnegative solution of
 \begin{equation*} \Delta_{g}\big( w_i-1\big) =
\frac14n(n-2){m}_i^{\frac{4}{n-2}}w_i^{\frac{n+2}{n-2}}
\quad\text{in }{\Omega}_i.\end{equation*}
By the Harnack inequality, we have
$$\max\big( w_i-1\big) \leq C_3 \Big(2C_12^{\frac{n+2}{n-2}}m_i^{\frac{4}{n-2}}+ C_2^{\frac{n+2}{n-2}}m_i^{\frac{4}{n-2}}  \Big)
\quad\text{in } B_{2r}(x_0),$$
where $C_3$ is a positive constant depending only on $n$, $M$ and $r$.
Hence,
\begin{equation*}w_i-1 \leq C m_i^{\frac{4}{n-2}}\quad\text{in } B_{2r}(x_0),\end{equation*}
where $C$ is a positive constant depending only on $n$, $M$ and $r$. By standard interior estimates, we obtain
\begin{equation*}|\nabla^2_{g} w_i|+|\nabla_{g} w_i|\leq C  m_i^{\frac{4}{n-2}}\quad\text{in } B_{r}(x_0).\end{equation*}
 This implies \eqref{eq-zero-scalar}.
\end{proof}

In Theorem \ref{nonpositive yam inv}, we can characterize blow-up sets precisely when the Yamabe invariant is nonpositive.
Next, we turn our attention to manifolds with a positive Yamabe invariant.
This is a much more complicated case.
We will decompose the manifold $M$ in three classes of points,
accumulation points of the limit set $\Gamma$, isolated points of $\Gamma$,
and points not in $\Gamma$. We first study accumulation points.
In the next result, we prove that any accumulation points of the limit set are
in the blow-up set.

\begin{theorem}\label{accumulation points}
Let $(M, g)$, $\Gamma$, and $\{\Omega_i\}$ be as in Assumption \ref{assumption-basic},
$\lambda(M,[g])$ be the Yamabe invariant of $(M, g)$,
and $\Gamma=\Gamma_{1}\bigcup\Gamma_{2}$
for $\Gamma_1$ and $\Gamma_2$ as in \eqref{eq-def-Gamma1-2}.
If $\lambda(M, [g])>0$, then any accumulation points of $\Gamma$ belong to $\mathcal{B}_{\Gamma,\{\Omega_i\}}$.
\end{theorem}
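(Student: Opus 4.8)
The plan is to normalize the setting and then, near the accumulation point $x_0$, manufacture a point where Lemma \ref{lemma-blowup-set2} applies. By the resolution of the Yamabe problem we may assume $S_g\equiv\lambda(M,[g])>0$ is a positive constant, so that the case $S_g\ge0$ of Lemma \ref{lemme-convergence1} gives $u_i\to0$ in $C^m_{\mathrm{loc}}(M\setminus\Gamma)$ and hence $v_i=u_i^{-2/(n-2)}\to\infty$ locally uniformly on $M\setminus\Gamma$. Thus for every $z\in (M\setminus\Gamma)\cap B_\varepsilon(x_0)$ we have $v_i(z)\to\infty$, while \eqref{boundary expansion v-gradient} gives $v_i=0$ and $|\nabla_gv_i|=1$ on $\partial\Omega_i$. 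The whole argument exploits this contrast: $v_i$ vanishes with unit gradient on the boundary that clusters near $x_0$, yet is arbitrarily large at interior points close to $x_0$.

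The engine is a one–dimensional argmax argument along geodesics, generalizing Case 2.1 in the proof of Theorem \ref{nonpositive yam inv}. Suppose we can find, for small $\varepsilon$ and large $i$, a unit–speed geodesic $\sigma:[0,L]\to B_{C\varepsilon}(x_0)$ with $L\le C\varepsilon$, $\sigma(0)\in\partial\Omega_i$, $\sigma((0,L])\subset\Omega_i$, and an interior point $s^{*}$ at which $f_i:=v_i\circ\sigma$ attains a local maximum with $f_i(s^{*})\to\infty$ and $f_i'(s^{*})=0$. On $[0,s^{*}]$ we have $f_i(0)=0$, so the average slope is $f_i(s^{*})/s^{*}\ge f_i(s^{*})/(C\varepsilon)\to\infty$; choosing $s_i$ to maximize $f_i'$ on $[0,s^{*}]$ gives $f_i'(s_i)\to\infty$. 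Since $|f_i'(0)|\le|\nabla_gv_i(\sigma(0))|=1$ and $f_i'(s^{*})=0$, the maximizer $s_i$ is interior, whence $f_i''(s_i)=\partial_{\dot\sigma\dot\sigma}v_i(\sigma(s_i))=0$ as $\sigma$ is a geodesic. Finally $v_i(\sigma(s_i))=\int_0^{s_i}f_i'\le s_i\,f_i'(s_i)\le C\varepsilon\,|\nabla_gv_i(\sigma(s_i))|$, so \eqref{eq-v_i-first-derivative-infty0}–\eqref{eq-v_icontrol0} hold at $x_{\varepsilon i}=\sigma(s_i)$ with $\nu_{\varepsilon i}=\dot\sigma(s_i)$, and Lemma \ref{lemma-blowup-set2} yields $x_0\in\mathcal{B}_{\Gamma,\{\Omega_i\}}$. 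When the accumulation of $\Gamma$ at $x_0$ is \emph{separating}, i.e. there are two distinct points of $\Gamma$ in $B_{\varepsilon/2}(x_0)$ joined by a geodesic whose interior meets $M\setminus\Gamma$, the desired configuration is automatic: that geodesic crosses $\partial\Omega_i$ near each endpoint and carries an interior point with $v_i\to\infty$, so $f_i$ has an interior maximum.

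The main difficulty is the non–separating case, typified by $x_0$ lying on a positive–dimensional $C^1$ piece $N\subset\Gamma_2$ of the borderline dimension $(n-2)/2$, whose codimension $(n+2)/2\ge2$ means that $\Gamma$ does not locally disconnect $M$. Then $\partial\Omega_i$ is a thin tube about $N$, a short geodesic running from a high–value point toward the boundary reaches $\partial\Omega_i$ only on one side, and $f_i$ need not attain an interior maximum within $B_{C\varepsilon}(x_0)$; indeed \eqref{eq-MEq-v} shows $\Delta_gv_i\sim\tfrac n2|\nabla_gv_i|^2/v_i$ is large and positive at such points, so no second derivative is automatically small. To produce the required interior critical point one must use the borderline dimension of $N$ quantitatively: following the constructions and the sharp boundary asymptotics of $v_i$ near thin tubes in \cite{HanShen3}, the removal of the tube keeps $v_i$ comparable to the distance to $\partial\Omega_i$ on a definite region while $v_i$ grows to infinity a fixed distance away, which pins $f_i$ down at a controlled scale and supplies an interior maximum of $v_i$ along a normal geodesic. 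Feeding this endpoint into the argmax argument above and invoking Lemma \ref{lemma-blowup-set2} completes the proof. The technically hardest step is precisely this uniform trapping of $v_i$ near the borderline submanifold; once it is in place, the remaining estimates are the elementary one–variable inequalities recorded above.
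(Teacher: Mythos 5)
Your normalization via the Yamabe problem, your use of Lemma \ref{lemme-convergence1} to get $v_i\to\infty$ locally uniformly, and your one-variable argmax engine are exactly the paper's tools, and your ``separating'' case reproduces the paper's argument: two nearby points of $\Gamma$ anchor a curve that crosses $\partial\Omega_i$ on both sides of an interior point where $v_i\to\infty$, the endpoint derivatives are bounded because $|\nabla_g v_i|=1$ on $\partial\Omega_i$, so the maximum of the derivative along the curve is interior and tends to infinity, and Lemma \ref{lemma-blowup-set2} applies.

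The gap is your non-separating case, which is the actual content of the theorem (for instance $n=4$ and $\Gamma_2$ a closed geodesic: every minimizing geodesic between nearby points of $\Gamma$ lies inside $\Gamma$). Your proposed treatment --- invoking ``thin tube'' asymptotics from \cite{HanShen3} to claim that $v_i$ is trapped at a controlled scale and acquires an interior maximum along a normal geodesic --- is not an argument, and the claim itself is implausible: a geodesic normal to the tube meets $\partial\Omega_i$ only once, $v_i$ vanishes there and tends to $\infty$ a fixed distance away, so nothing forces an interior critical point of $v_i\circ\sigma$ at small scale; as you yourself observe, $\Delta_g v_i$ is large and positive at the relevant points, which works against any such trapping. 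What the paper does instead is keep the two-anchor configuration but abandon geodesics: it joins $x_0$ to a nearby point $x\in\Gamma$ by the family of circular arcs of a fixed large radius $R$ obtained by rotating one such arc, in the directions $x_1,\dots,x_{n-1}$, about the axis through $x_0$ and $x$. These arcs are disjoint away from their endpoints and sweep out a set of Hausdorff dimension $n-1$, while $\Gamma$ has Hausdorff dimension at most $(n-2)/2<n-1$; hence some arc has an interior point outside $\Gamma$, that arc crosses $\partial\Omega_i$ on both sides of it, and the argmax argument runs. The price --- the arc is not a geodesic, so vanishing of $\partial_{tt}(v_i\circ\sigma)$ no longer kills the second directional derivative of $v_i$ --- is absorbed because the correction term \eqref{eq-second deriv-v3} is of size $O(|\nabla_g v_i|/R)$, and Lemma \ref{lemma-blowup-set2} only requires $|\partial_{\nu\nu}v_i|\le\varepsilon|\nabla_g v_i|$, not exact vanishing. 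This rotation-plus-dimension-count construction is the missing idea; without it, or some genuine substitute, your proof covers only the easy configuration.
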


\begin{proof} By the solution of the Yamabe problem \cite{LeeParker1987}, we can assume the scalar curvature $S_g$
of $M$ is the constant $\lambda(M,[g])$.
Since $M$ is compact, we take $\Lambda>0$ such that
$$|R_{ij}|\leq \Lambda g_{ij}.$$
Let  $u_i$ be the solution
of \eqref{eq-MEq-i}-\eqref{eq-MBoundary-i} in $\Omega_i$ and set $v_i=u_i^{-\frac{2}{n-2}}$.
Then,
$g_{i}=u_i^{\frac{4}{n-2}}g=v_i^{-2}g.$
By Lemma \ref{lemme-convergence1}, we have
\begin{equation*}%\label{u_i go to zero-plus}
u_i\rightarrow 0\quad\text{in }C^{m}_{\mathrm{loc}}( M \setminus\Gamma)
\text{ as }i\rightarrow\infty,\end{equation*}
and hence
\begin{equation*}
v_i\rightarrow \infty
\quad\text{uniformly in any compact subsets of $M \setminus\Gamma$
as }i\rightarrow\infty.\end{equation*}
Fix an accumulation point $x_0 \in \Gamma$. We will prove $x_0\in \mathcal{B}_{\Gamma,\{\Omega_i\}}$.

For a sufficiently small $\epsilon_0>0$, take a point $x\in \Gamma$
such that $d_{g}(x,x_0)=2\epsilon<\epsilon_0$.
We choose normal coordinates in a small neighborhood of
$x_0$ such that $x_0=0$, $x=2\epsilon e_n$, and the line segment
$\{te_n;\, t\in [0,2\epsilon]\}$ is the  shortest geodesic connecting $x_0$ and $x$.
Since the Hausdorff dimension of $\Gamma$ is less than or equal to $(n-2)/2$,
up to a rotation in $x_1, \cdots, x_{n-1}$, we can find a curve $\sigma$ given by
$$\sigma(t)=\big(\big[R^2-(t-\epsilon)^2\big]^{1/2}
-\big[R^2-\epsilon^2\big]^{1/2}, 0, \cdots, 0, t\big)\quad\text{for }t \in [0,2\epsilon],$$
where $R$ is a sufficiently large constant such that $\sigma\bigcap\Gamma\neq \sigma$.
We take $\sigma(\overline{t})\notin\Gamma$,
for some $\overline{t}\in (0,2\epsilon)$.
Then, $\sigma(\overline{t})\in\Omega_i$ for $i$ large, and
$(v_i\circ\sigma)(\overline{t})\rightarrow\infty$ as $i\rightarrow\infty$.
We point out that $R$ and $\overline{t}$ depend on $\epsilon$, but independent of $i$.

For $i$ large, take ${t}_i^*\in (0, \overline{t})$ and ${t}'_i\in (\overline{t}, 2\epsilon)$
such that $\sigma({t}^*_i), \sigma({t}'_i)\in \partial\Omega_i$ and
$\sigma(t) \in \Omega_i$  for any $t\in ({t}^*_i,{t}'_i)$.
By the polyhomogeneous expansion of $v_i$,
we have
$$|\partial_{n} v_i(\sigma({t}^*_i))|\leq C,\quad
|\partial_{n} v_i(\sigma({t}'_i))|\leq C,$$
where $C$ is a positive constant independent of $i$ and $\epsilon$.

Consider the single variable function $(v_i\circ\sigma)(t)$ on $[{t}^*_i,{t}'_i]$.
For $\epsilon$ small and $i$ large,
we take $t_i \in [{t}^*_i,{t}'_i]$ such that, for any $t\in [{t}^*_i,{t}'_i]$,
$$\partial_{t}(v_i\circ\sigma)(t)\leq \partial_{t} (v_i\circ\sigma)(t_i).$$
Then,
$$\partial_{t} (v_i\circ\sigma)(t_i)>\frac{1}{2\epsilon} (v_i\circ\sigma)(\overline{t}).$$
Thus, $t_i \in ({t}^*_i,{t}'_i)$, and hence
\begin{equation}\label{eq-second deriv}\partial_{tt} (v_i\circ\sigma)(t_i)=0.\end{equation}
We also have
\begin{equation}\label{eq-v_icontrol2}|(v_i\circ\sigma)(t_i)|\leq
2\epsilon\partial_{t} (v_i\circ\sigma)(t_i).\end{equation}
Note that
$$\partial_{t}(v_i\circ\sigma)(t_i)=
(\partial_n v_i)(\sigma(t_i))
-\frac{t_i-\epsilon}{[R^2-(t_i-\epsilon)^2]^{1/2}}(\partial_1 v_i)(\sigma(t_i)).$$
Set
$$\nu_i=\partial_n-\frac{t_i-\epsilon}{[R^2-(t_i-\epsilon)^2]^{1/2}}\partial_1.$$
By \eqref{eq-second deriv}, we have
\begin{equation}\label{eq-second deriv-v3}( \partial_{\nu_i\nu_i}v_i)(\sigma_k(t_i))
=-\Big(\frac{1}{[R^2-(t_i-\epsilon)^2]^{1/2}}-\frac{(t_i-\epsilon)^2}{[R^2-(t_i-\epsilon)^2]^{3/2}}\Big)
\partial_1 v_i(\sigma(t_i)).\end{equation}
Hence, $( \partial_{\nu_i\nu_i}v_i)(\sigma(t_i))$ is sufficiently small compared with $|\nabla v_i|(\sigma(t_i))$,
for $R$ sufficiently large and
$\epsilon$ sufficiently small.

Since $x_0\in \Gamma$ is an accumulation point of $\Gamma$,
we take a sequence of points $x_k\in\Gamma$ with $d_{g}(x_k,x_0)=2\epsilon_k\to0$.
Correspondingly, we have a sequence of curves $\sigma_k(t)$ defined for $t\in [0,2\epsilon_k]$, with
$\sigma_k(0)=x_0$ and $\sigma_k(2\epsilon_k)=x_k$, and a sequence of
$t_i^k\in (0,2\epsilon_k)$ for each $i$, as above.
Note that $\sigma_k(t_i^k)\rightarrow x_0$ as $\epsilon_k\to 0$.
 We can apply Lemma \ref{lemma-blowup-set2} to conclude
$x_0\in \mathcal{B}_{\Gamma,\{\Omega_i\}}$.
\end{proof}

The proof of Theorem \ref{gemma3 nonempty} and Theorem \ref{accumulation points} are modified from \cite{HanShen3}.

\section{Isolated points}\label{Isolated points}

Theorem \ref{accumulation points} asserts that any accumulation points of the limit set are
in the blow-up set. A natural question is whether we can characterize other points in the blow-up set.
In this section, we study blow-up phenomena near isolated points
if the underlying manifolds have a positive Yamabe invariant.
In certain cases, we need the positive mass theorem.

 Throughout this section, we denote by $\lambda(M,[g])$ the Yamabe invariant of $(M, g)$,
and by $L_g$ the conformal Laplacian of $(M,g)$, i.e.,
\begin{equation*}L_g =-\Delta_{g} +\frac{n-2}{4(n-1)}S_g.\end{equation*}
For any $x_0\in M$, $G_{x_0}\in C^{\infty}(M \setminus\
\{x_0\})$ is the Green's function for the conformal Laplacian $L_g$ with a pole at $x_0$.

We first study manifolds not conformally equivalent to the standard sphere $S^n$.

\begin{lemma}\label{lemma-not the sphere}
Let $(M, g)$, $\Gamma$, $\{\Omega_i\}$, and $\{g_i\}$ be as in Assumption \ref{assumption-basic},
with $0<\lambda(M,[g])< \lambda(S^n,[g_{S^n}])$, $u_i$ be the solution
of \eqref{eq-MEq-i}-\eqref{eq-MBoundary-i},
and $x_0\in\Gamma$ be an isolated point of $\Gamma$.
For some constant $r>0$, assume
\begin{equation}\label{not sphere u_i difference-behavioer00}u_i=m_i{\overline{u}}_i(aG_{x_0}+b) \quad\text{in }\Omega_i\cap B_r(x_0),
\end{equation}
where $\{m_i\}$ is a sequence of positive constants with $m_i\to 0$, $\{\overline{u}_i\}$ is a sequence of functions
with $\overline{u}_i\to 1$ in $C^{m}_{\mathrm{loc}}(B_r(x_0)\setminus \{x_0\})$ as $i\to\infty$ for any $m$,
$a$ is a nonnegative constant, and $b$ is a nonnegative smooth function in $B_r(x_0)$.
If either $a$ is a positive constant or $b$ is a positive function in $B_r(x_0)$, then $x_0\in \mathcal{B}_{\Gamma,\{\Omega_i\}}$.
\end{lemma}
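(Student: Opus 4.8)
The goal is to show that the isolated point $x_0\in\Gamma$ belongs to the blow-up set, given the representation $u_i=m_i\overline{u}_i(aG_{x_0}+b)$ near $x_0$. My strategy is to translate this representation into precise information about $v_i=u_i^{-\frac{2}{n-2}}$ and its derivatives, and then verify the hypotheses of Lemma \ref{lemma-blowup-set2} along a suitable geodesic ray emanating from $x_0$. Since $x_0$ is isolated in $\Gamma$, for small $r>0$ the punctured ball $B_r(x_0)\setminus\{x_0\}$ lies in $M\setminus\Gamma$, so $\{\Omega_i\}$ eventually exhausts it and we may run a one-dimensional argument along a radial geodesic $\{te_n\}$ exactly as in Lemma \ref{lemma-blowup-set1} and Case 2.1 of Theorem \ref{nonpositive yam inv}.

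\textbf{The key structural input.} The essential point is the behavior of $v_i$ near $x_0$ dictated by \eqref{not sphere u_i difference-behavioer00}. Writing $f=aG_{x_0}+b$, we have $v_i=m_i^{-\frac{2}{n-2}}(\overline{u}_i f)^{-\frac{2}{n-2}}$. Because $\overline{u}_i\to 1$ in $C^m_{\mathrm{loc}}$ away from $x_0$, the function $v_i$ converges, after the scaling factor $m_i^{-2/(n-2)}$, to a multiple of $f^{-2/(n-2)}$ on compact subsets of $B_r(x_0)\setminus\{x_0\}$. The decisive feature is that $f$ is \emph{positive} near $x_0$: if $b>0$ this is immediate, while if $a>0$ the Green's function $G_{x_0}$ is positive (for $\lambda(M,[g])>0$ the conformal Laplacian admits a positive Green's function) and blows up like $d(\cdot,x_0)^{-(n-2)}$ at the pole, so $f>0$ near $x_0$ in either case. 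Consequently the limit profile $v=c\,f^{-2/(n-2)}$ is a positive continuous function that extends across $x_0$ (it vanishes at $x_0$ only in the case $a>0$, where $f^{-2/(n-2)}\to 0$, but stays bounded and nonnegative regardless).

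\textbf{Running the blow-up argument.} Fix a small $\varepsilon>0$ and choose normal coordinates with $x_0=0$, working along the geodesic $\{te_n\}$. For $i$ large, let $t_i^*\in(0,\varepsilon)$ be the last boundary crossing so that $t_i^*e_n\in\partial\Omega_i$; by the polyhomogeneous expansion \eqref{boundary expansion v-gradient}, $|\partial_n v_i(t_i^*e_n)|\le 1$, and $v_i(t_i^*e_n)=0$. Because $m_i\to 0$ forces $m_i^{-2/(n-2)}\to\infty$, the value $v_i(\varepsilon e_n)\sim m_i^{-2/(n-2)}c\,f(\varepsilon e_n)^{-2/(n-2)}$ tends to $+\infty$. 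Selecting $t_i\in(t_i^*,\varepsilon)$ where $\partial_n v_i$ is maximized along the segment, the difference quotient gives $\partial_n v_i(t_ie_n)\gtrsim \frac{1}{\varepsilon}v_i(\varepsilon e_n)\to\infty$, which yields \eqref{eq-v_i-first-derivative-infty0}; maximality forces $\partial_{nn}v_i(t_ie_n)=0$, and the same difference-quotient bound controls $v_i(t_ie_n)\le \varepsilon\,\partial_n v_i(t_ie_n)$, giving \eqref{eq-v_icontrol0} with $\nu_{\varepsilon i}=\partial_n$. Applying Lemma \ref{lemma-blowup-set2} and letting $\varepsilon\to 0$ places $x_0$ in $\mathcal{B}_{\Gamma,\{\Omega_i\}}$.

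\textbf{Anticipated obstacle.} The main delicacy is justifying the gradient blow-up \eqref{eq-v_i-first-derivative-infty0} uniformly in $\varepsilon$ and the bound $|\partial_n v_i(t_i^*e_n)|\le 1$ at the moving boundary point: one must control $v_i$ \emph{at the fixed interior point} $\varepsilon e_n$ via the $C^m_{\mathrm{loc}}$ convergence of $\overline{u}_i$, while the point $t_i$ where the second derivative vanishes is allowed to drift. The positivity of $f$ near $x_0$ is what guarantees $v_i(\varepsilon e_n)$ has a genuine lower bound of order $m_i^{-2/(n-2)}$ rather than degenerating; this is where the hypothesis ``$a>0$ or $b>0$'' is used, and it is the crux of the argument. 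Once that lower bound is secured, the remaining steps are the standard mean-value and maximality manipulations already appearing in the proofs of Lemma \ref{lemma-blowup-set1} and Theorem \ref{nonpositive yam inv}(2).
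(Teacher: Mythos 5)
Your treatment of the case $a=0$ (hence $b>0$ in $B_r(x_0)$) is correct and coincides with the paper's first case: after rescaling, $m_i^{2/(n-2)}v_i\to b^{-2/(n-2)}$, a positive smooth function across $x_0$, so the derivative of $v_i$ at the fixed endpoint $\epsilon e_n$ is $O(m_i^{-2/(n-2)})$ while the difference quotient from the boundary crossing is of order $m_i^{-2/(n-2)}/\epsilon$; for $\epsilon$ small the maximum of $\partial_n v_i$ along the segment is therefore interior, and Lemma \ref{lemma-blowup-set2} applies. The gap is in the case $a>0$, which you run through the same one-dimensional argument. There the limit profile is $(aG_{x_0}+b)^{-2/(n-2)}$, which behaves like $a^{-2/(n-2)}d(\cdot,x_0)^2$ near the pole, so along your geodesic $v_i(te_n)\approx m_i^{-2/(n-2)}a^{-2/(n-2)}t^2$ and $\partial_n v_i(te_n)\approx 2m_i^{-2/(n-2)}a^{-2/(n-2)}t$ is increasing in $t$: its maximum over $[t_i^*,\epsilon]$ sits at the right endpoint $t=\epsilon$, not at an interior point, so the step ``maximality forces $\partial_{nn}v_i(t_ie_n)=0$'' fails. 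Worse, $\partial_{nn}v_i\approx 2m_i^{-2/(n-2)}a^{-2/(n-2)}$ is comparable to $|\nabla_g v_i(te_n)|/t$ at every point of the segment, so the hypothesis \eqref{eq-v_icontrol0} of Lemma \ref{lemma-blowup-set2} cannot be verified anywhere along it: the second-derivative term is never small relative to $\varepsilon|\nabla_g v_i|$.

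This is not a technicality that a cleverer choice of points would repair, because your argument nowhere uses $0<\lambda(M,[g])<\lambda(S^n,[g_{S^n}])$, and with $b\equiv 0$ the conclusion is false without that hypothesis. Concretely, in Example \ref{exa-blow up one point} (transferred to $S^n$ by stereographic projection, where $\lambda(M,[g])=\lambda(S^n,[g_{S^n}])$), the solutions near $p_1$ have exactly the form \eqref{not sphere u_i difference-behavioer00} with $a>0$ and $b\equiv 0$: to leading order $v_i\approx(|x-p_1|^2-r_i^2)/(2r_i)$, the conformal factor of the hyperbolic metric on the exterior of a ball, whose Ricci curvature is bounded; the example shows $\mathcal B_{\{p_1,p_2\},\{\Omega_i\}}=\{p_2\}$, so $p_1$ is not a blow-up point. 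The underlying reason is that the leading term $aG_{x_0}$ by itself generates an asymptotically Ricci-flat metric near $x_0$, so when $a>0$ the blow-up is a second-order effect: it is driven by the strict positivity of the constant, mass-type term in the expansion $aG_{x_0}+b=a\,d^{2-n}+\bigl(aA+b(x_0)\bigr)+\cdots$ at $x_0$, where $A>0$ by the positive mass theorem precisely because $M$ is not conformally equivalent to $S^n$, and the sign assumption $b\ge 0$ ensures $b(x_0)$ does not cancel it. This is why the paper, in the case $a>0$, expands $b(x)=A'+O(|x-x_0|)$ and invokes Case 3 of the proof of Theorem 4.3 in \cite{HanShen3} rather than the elementary segment argument; that mass-based second-order analysis is the idea missing from your proposal.
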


\begin{proof}
We consider two cases.

If $a=0$,
then $b$ is a positive function in ${B_{\delta}(x_0 )}$
and $u_i/m_i\rightarrow b$ in $C^{m}_{\mathrm{loc}}({B_{\delta}(x_0 )}\setminus\{x_0\})$
as $i\rightarrow\infty.$
Arguing similarly as in the proof of Lemma \ref{lemma-blowup-set1},
we have $x_0\in \mathcal{B}_{\Gamma,\{\Omega_i\}}.$

If  $a>0$, we can write $b(x)=A+O(|x-x_0|)$ near $x_0$
for some constant $A\geq0$.
Proceeding as in Case 3 in the proof of Theorem 4.3 in \cite{HanShen3},
we conclude $x_0 \in  \mathcal{B}_{\Gamma,\{\Omega_i\}}$.
\end{proof}

The proof of Case 3  in Theorem 4.3 in \cite{HanShen3} needs the positive mass theorem.
The sign of $b$ in \eqref{not sphere u_i difference-behavioer00}
is important and ensures a nonnegative contribution to the mass.
The positive mass theorem has been known to be true if $3\leq n\leq 7$, or $M$ is locally conformally flat, or
$M$ is spin. (See \cite{LeeParker1987}, \cite{SchoenYau1979}, \cite{SchoenYau1988}, and \cite{Witten1981}.)
These conditions are technical and can be removed according to the recent papers
\cite{Lohkamp4} and \cite{SchoenYau2017}.

In fact, the first two authors \cite{HanShen3}
proved that $\mathcal{B}_{\Gamma,\{\Omega_i\}}$ contains
an entire neighborhood of $x_0$ if $3\le n\le 5$ or $M$ is conformally flat in a neighborhood of $x_0$
%or the Weyl tensor of $g$ does not vanish at $x_0$
and that $\mathcal{B}_{\Gamma,\{\Omega_i\}}$ satisfies
$|\mathcal{B}_{\Gamma,\{\Omega_i\}}\cap B_r(x_0)|/|B_r(x_0)|\to 1$ as $r\to 0$
%$\mathcal{B}_{\Gamma,\{\Omega_i\}}$ contains a cone-shaped domain with the vertex $x_0$
in the rest of the cases.
In all cases, the blow-up set $\mathcal{B}_{\Gamma,\{\Omega_i\}}$ contains points {\it not} in $\Gamma$.
We will prove that the blow-up set $\mathcal{B}_{\Gamma,\{\Omega_i\}}$ is the {\it entire} manifold
if, in addition, $(M,g)$ is assumed to be a locally conformally flat manifold.
Refer to Theorem \ref{LCF} for details.

We now prove all isolated points in the limit set belong to the blow-up set
if the underlying manifold is not conformally equivalent to the standard sphere $S^n$.

\begin{theorem}\label{not the sphere}
Let $(M, g)$, $\Gamma$, and $\{\Omega_i\}$ be as in Assumption \ref{assumption-basic},
and $\Gamma=\Gamma_{1}\bigcup\Gamma_{2}$
for $\Gamma_1$ and $\Gamma_2$ as in \eqref{eq-def-Gamma1-2}.
If $0<\lambda(M,[g])< \lambda(S^n,[g_{S^n}])$,
then $\Gamma\subseteq \mathcal{B}_{\Gamma,\{\Omega_i\}}$.
\end{theorem}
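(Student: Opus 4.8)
The plan is to combine Theorem \ref{accumulation points} with a separate treatment of the isolated points of $\Gamma$. Since every point of $\Gamma$ is either an accumulation point or an isolated point, and Theorem \ref{accumulation points} already puts all accumulation points in $\mathcal{B}_{\Gamma,\{\Omega_i\}}$, it suffices to show that each isolated point $x_0\in\Gamma$ lies in $\mathcal{B}_{\Gamma,\{\Omega_i\}}$. For this I will verify the hypothesis \eqref{not sphere u_i difference-behavioer00} of Lemma \ref{lemma-not the sphere} and then invoke that lemma. As usual I first solve the Yamabe problem to normalize $S_g=\lambda(M,[g])$ to a positive constant, so that Lemma \ref{lemme-convergence1} gives $u_i\to 0$ in $C^m_{\mathrm{loc}}(M\setminus\Gamma)$. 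Fix an isolated point $x_0$ and choose $r>0$ with $\overline{B_r(x_0)}\cap\Gamma=\{x_0\}$.

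To produce the representation I normalize the $u_i$. Fix a reference point $y_0$ on $\{|x-x_0|=r/2\}$, so $y_0\in M\setminus\Gamma$ and $m_i:=u_i(y_0)\to 0$, and set $w_i=u_i/m_i$. From \eqref{eq-MEq-i} the functions $w_i$ satisfy $\Delta_g w_i=\big[\tfrac{n-2}{4(n-1)}S_g+\tfrac{n(n-2)}{4}u_i^{4/(n-2)}\big]w_i$ in $\Omega_i\cap B_r(x_0)$. The key observation is that on any closed annulus $\{\rho\le|x-x_0|\le r\}$, which is a compact subset of $M\setminus\Gamma$, the monotone convergence $u_i\downarrow 0$ forces $u_i^{4/(n-2)}\le C$ uniformly in $i$; hence $w_i$ solves a linear equation $\Delta_g w_i=c_i w_i$ with coefficients $c_i$ bounded uniformly in $i$. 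The Harnack inequality then gives $\sup_K w_i\le C_K\inf_K w_i\le C_K w_i(y_0)=C_K$ on connected compact sets $K\subset B_r(x_0)\setminus\{x_0\}$ containing $y_0$, so after passing to a subsequence $w_i\to w$ in $C^m_{\mathrm{loc}}(B_r(x_0)\setminus\{x_0\})$, where $L_g w=0$, $w\ge 0$, and $w(y_0)=1$. Since $\lambda(M,[g])>0$, the strong maximum principle yields $w>0$ in the punctured ball.

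Next I apply a B\^ocher-type theorem for the isolated singularity of the positive $L_g$-harmonic function $w$ at $x_0$ to write $w=aG_{x_0}+b$, where $a\ge 0$ and $b$ is an $L_g$-harmonic function in the full ball $B_r(x_0)$; the pair $(a,b)$ is nontrivial because $w(y_0)=1$. If $a=0$ then $b=w>0$. If $a>0$, then $b$ is the regular part of $w$, and, exactly as in the proof of Lemma \ref{lemma-not the sphere} (Case 3 of Theorem 4.3 in \cite{HanShen3}), the hypothesis $0<\lambda(M,[g])<\lambda(S^n,[g_{S^n}])$ lets the positive mass theorem guarantee $b(x_0)\ge 0$; since the mass is strictly positive off the round sphere, in fact $b(x_0)>0$, so after shrinking $r$ we may assume $b>0$ in $B_r(x_0)$. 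In either case $a\ge 0$ and $b>0$, so the sign conditions of Lemma \ref{lemma-not the sphere} hold.

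Finally, setting $\overline{u}_i=w_i/w$, the convergence $w_i\to w$ together with $w>0$ in $B_r(x_0)\setminus\{x_0\}$ gives $\overline{u}_i\to 1$ in $C^m_{\mathrm{loc}}(B_r(x_0)\setminus\{x_0\})$, and by construction $u_i=m_i\overline{u}_i(aG_{x_0}+b)$ on $\Omega_i\cap B_r(x_0)$, which is precisely \eqref{not sphere u_i difference-behavioer00}. Lemma \ref{lemma-not the sphere} then gives $x_0\in\mathcal{B}_{\Gamma,\{\Omega_i\}}$, and hence $\Gamma\subseteq\mathcal{B}_{\Gamma,\{\Omega_i\}}$. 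I expect the main obstacle to be the second paragraph: extracting a \emph{nontrivial} positive limit $w$ and the associated representation as $\partial\Omega_i\to x_0$ and $m_i\to 0$ degenerate simultaneously, for which the uniform boundedness of $u_i^{4/(n-2)}$ on annuli (hence a uniform Harnack constant) is the decisive point. The positive-mass input that fixes the sign of the regular part is already packaged inside Lemma \ref{lemma-not the sphere}, so here it only remains to verify that lemma's hypotheses.
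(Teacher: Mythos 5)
Your skeleton (accumulation points via Theorem \ref{accumulation points}, isolated points via Lemma \ref{lemma-not the sphere}) is the same as the paper's, and your construction of the normalized limit is sound: with $m_i=u_i(y_0)$, the uniform coefficient bound on annuli plus Harnack and Schauder estimates do give a subsequential limit $w$ with $L_gw=0$ and $w>0$ in $B_r(x_0)\setminus\{x_0\}$. The genuine gap is the sign of the regular part $b$ in the decomposition $w=aG_{x_0}+b$. A B\^ocher-type theorem only yields $a\ge0$ and $b$ an $L_g$-harmonic function in the full ball; it gives no sign for $b$. For instance, if $b_0$ solves $L_gb_0=0$ in a small ball with boundary value $-1$ (so $b_0<0$ by the maximum principle on small balls), then $G_{x_0}+b_0$ is a positive $L_g$-harmonic function in a small punctured ball whose regular part is strictly negative; so positivity of $w$ alone cannot give $b\ge0$. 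Your attempt to extract $b(x_0)\ge0$ (indeed $>0$) ``from the positive mass theorem'' inverts the logic of Lemma \ref{lemma-not the sphere}: there the nonnegativity of $b$ is a \emph{hypothesis}, needed precisely so that the regular part contributes a nonnegative correction to the mass term coming from $G_{x_0}$; the PMT then makes the total positive and drives the blow-up (see the remark after the lemma: ``The sign of $b$ \dots ensures a nonnegative contribution to the mass''). The PMT constrains the mass of the Green's-function metric, not the regular part of an arbitrary positive singular solution, so in the case $a>0$ your verification of the lemma's hypotheses fails. (Your case $a=0$, where $b=w>0$, is fine.)

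What is missing is the paper's comparison construction, which is exactly the device that produces the sign. When $\Gamma\neq\{x_0\}$, the paper introduces the enlarged domains $\widetilde\Omega_i=M\setminus\overline{O}_i$, $O_i=B_\delta(x_0)\setminus\overline\Omega_i$, so that $\widetilde\Omega_i\to M\setminus\{x_0\}$, and the corresponding solutions satisfy $\widetilde u_i\le u_i$ by the maximum principle. The singular part is identified \emph{globally}: $\widetilde u_i/m_i\to aG_{x_0}$, because Proposition 9.1 of \cite{LiZhu1999} applies on $M\setminus\{x_0\}$ and the global regular part must vanish, since $\lambda(M,[g])>0$ forbids a nontrivial solution of $L_gb=0$ on all of $M$. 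The regular part is then realized as $b=\lim\overline u_i/m_i$ with $\overline u_i=u_i-\widetilde u_i\ge0$, extended by zero across $\overline O_i$ as a $C^2$ function using the polyhomogeneous expansions ($\overline u_i=O(d^{(n+2)/2})$ near $\partial\Omega_i\cap B_\delta(x_0)$); this limit is manifestly nonnegative, bounded by Harnack plus the maximum principle, hence has a removable singularity at $x_0$ and is either $\equiv0$ or $>0$ by the strong maximum principle. (When $\Gamma=\{x_0\}$ the same global Liouville argument gives $b\equiv0$ outright.) This two-sided construction---not the PMT---is what legitimizes applying Lemma \ref{lemma-not the sphere}; a purely local B\^ocher decomposition of the limit $w$ cannot substitute for it.
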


\begin{proof} Let  $u_i$ be the solution
of \eqref{eq-MEq-i}-\eqref{eq-MBoundary-i} in $\Omega_i$ and set $v_i=u_i^{-\frac{2}{n-2}}$.
Then,
$g_{i}=u_i^{\frac{4}{n-2}}g=v_i^{-2}g,$
and
\begin{equation*}  -L_gu_{i} =
\frac14n(n-2){u}_{i}^{\frac{n+2}{n-2}}
\quad\text{in }{\Omega}_i.\end{equation*}
We assume
$$|R_{ij}|\leq \Lambda g_{ij},$$
for some positive constant $\Lambda$.
Since all accumulation points of $\Gamma$ belong to $\mathcal{B}_{\Gamma,\{\Omega_i\}}$
by Theorem \ref{accumulation points},
we only need to prove that isolated points in $\Gamma$ belong to $\mathcal{B}_{\Gamma,\{\Omega_i\}}$.

Take an isolated point $x_0\in\Gamma$. We will prove $x_0 \in  \mathcal{B}_{\Gamma,\{g_i\}}$.

{\it Case 1}: $\Gamma=\{x_0\}$.
Take a small positive constant $\delta$ such that $M\setminus \Omega_i\subset B_{\delta/2}(x_0)$ for large $i$.
For $i$ large, we denote by $m_i$
the minimum of $u_i$
in $\overline{B_{\delta}(x_0 )}\setminus B_{\delta/2}(x_0 )$.
By Lemma \ref{lemme-convergence1}, $m_i\rightarrow0$ as $i\to\infty$.
For any $\Omega'\subset\subset M\setminus \{x_0\}$ and $i$ large, by
the Harnack inequality, we have
$$u_i \leq C(\Omega') m_i
\quad\text{in } \Omega',$$
where $C(\Omega')$ is a positive constant depending only on $n$, $\Omega'$, and $M$.
We rewrite the equation for $u_i$ as
\begin{equation*}  -L_g\Big(\frac{u_i}{m_i}\Big) =
\frac14n(n-2){m}_i^{\frac{4}{n-2}}\Big(\frac{u_i}{m_i}\Big)^{\frac{n+2}{n-2}}
\quad\text{in }{\Omega}_i.\end{equation*}
By interior Schauder estimates and ${m}_i\rightarrow0$,
there exist  a subsequence $\{{ u}_{i}\}$, still denoted by $\{{ u}_{i}\}$,
and a positive function ${w} \in M\setminus \{x_0\}$ such that, for any positive integer $m$,
$$\frac{u_i}{m_i}\rightarrow w\quad\text{in }C^{m}_{\mathrm{loc}}( M\setminus \{x_0\})
\text{ as }i\rightarrow\infty,$$
and
\begin{equation*}L_g{w} =0
\quad\text{in  }  M\setminus \{x_0\}.\end{equation*}
According to Proposition 9.1 in \cite{LiZhu1999}, we conclude
\begin{equation}\label{w expression-v0a}
w= {a}G_{x_0}+{b}\quad\text{in }\, M\setminus \{x_0\},\end{equation}
where $G_{x_0}\in C^{\infty}(M \setminus\
\{x_0\})$ is the Green's function for the conformal Laplacian $L_g$ with a pole at $x_0$,
${a}$ is a nonnegative constant,
and ${b}$ is a smooth function in $M$ with
\begin{equation*}L_g {b}=0\quad\text{in } M. \end{equation*}
Since $\lambda(M,[g])>0$, we have ${b}=0$ on $M$.
By ${w}\geq 1$ in $\overline{B_{\delta}(x_0 )}\setminus B_{\delta/2}(x_0 )$, ${a}$ must be positive.
Therefore,
\begin{equation}\label{w expression-v00}
w= {a}G_{x_0}\quad\text{in }\, M\setminus \{x_0\}.\end{equation}
We write
\begin{equation}\label{not sphere u_i behavioer-v0}u_i=m_i\cdot \frac{u_i}{m_iw}\cdot w,\end{equation}
where $u_i/(m_iw)\to 1$ in $C^{m}_{\mathrm{loc}}(M\setminus \{x_0\})$ as $i\to\infty$
and $w$ is given by \eqref{w expression-v00} for some positive constant $a$.
Applying Lemma \ref{lemma-not the sphere} with $b\equiv 0$,
we conclude $x_0 \in  \mathcal{B}_{\Gamma,\{\Omega_i\}}$.

{\it Case 2}:  $\Gamma\neq \{x_0\}$.
Let $\delta$ be a small positive constant such that
$\Lambda\delta<{1}/{10}$ and $\overline{B_{\delta}(x_0 )}\bigcap\Gamma=\{x_0\}$.
Set
$$O_{i}=B_{\delta}(x_0)\setminus \overline{\Omega}_i,
\quad \widetilde{\Omega}_i=M\setminus\overline{O}_{i}.$$
Then, $\Omega_i\subseteq\widetilde{\Omega}_i$ and $\overline{O}_i\subseteq B_{\delta/2}(x_0 )$ for $i$ large,
and $\widetilde{\Omega}_i\rightarrow M\setminus \{x_0\}$
as $i\rightarrow\infty$. For $i$ large, we denote by $m_i$
the minimum of $u_i$
in $\overline{B_{\delta}(x_0 )}\setminus B_{\delta/2}(x_0 )$. Then by
Lemma \ref{lemme-convergence1}, $m_i\rightarrow0$.
For any $\Omega'\subset\subset M\setminus \Gamma$ and $i$ large, by
the Harnack inequality, we have
$$\max u_i \leq C(\Omega') m_i
\quad\text{in } \Omega',$$
where $C(\Omega')$ is a positive constant depending only on $n$, $\Omega'$, and $M$.

Let $\widetilde{u}_{i}$ be the solution of \eqref{eq-MEq}-\eqref{eq-MBoundary} for $\Omega=\widetilde{\Omega}_i$.
By the maximum principle, we have $\widetilde{u}_i\leq u_i$ in ${\Omega}_i$ for $i$ large.
For $i$ large, we denote by $\widetilde{m}_i$ the minimum of $\widetilde{u}_i$
in $\overline{B_{\delta}(x_0 )}\setminus B_{\delta/2}(x_0 )$. Then, $\widetilde{m}_i\le m_i$.
For any $\Omega'\subset\subset M\setminus \{x_0\}$ and $i$ large, by
the Harnack inequality, we have
$$\widetilde{u}_i \leq C(\Omega') {m}_i
\quad\text{in } \Omega',$$
where $C(\Omega')$ is a positive constant depending only on $n$, $\Omega'$, and $M$.
Arguing as in Case 1, by taking a subsequence if necessary,
we have, for any positive integer $m$,
\begin{equation}\label{not sphere u_i behavioer}
\frac{\widetilde{u}_{i}}{m_i}\rightarrow {a}G_{x_0}\quad\text{in }C^{m}_{\mathrm{loc}}( M\setminus \{x_0\})
\text{ as }i\rightarrow\infty,\end{equation}
where $G_{x_0}\in C^{\infty}(M \setminus\
\{x_0\})$ is the Green's function for the conformal Laplacian $L_g$ with a pole at $x_0$, and
${a}$ is a nonnegative constant.

Set
$\overline{u}_i=u_i- \widetilde{u}_i$.
Then,
\begin{equation}\label{eq-equation-i}
-L_g\overline{u}_i=f_i\quad\text{in }\Omega_i,\end{equation}
where
\begin{equation*}f_i=  \frac14n(n-2)\big[{u}_i^{\frac{n+2}{n-2}} -\widetilde{u}_i^{\frac{n+2}{n-2}}\big].\end{equation*}
Note that $\overline{u}_i\ge 0$ and $f_i\ge 0$ in $\Omega_i$.
By the polyhomogeneous expansions for $u_i$ and $\widetilde{u}_i$ as in \cite{ACF1982CMP} and \cite{Mazzeo1991},
we have
$$\overline{u}_i=d^{-\frac{n-2}{2}}O(d^{n})=O( d^{\frac{n+2}{2}})\quad\text{near }
\partial \Omega_i\cap B_{\delta}(x_0 ).$$
Since $(n+2)/2\ge 5/2$ for $n\ge 3$,
we can extend $\overline{u}_i$ to a nonnegative $C^{2}$-function in
$\overline{B_{\delta}(x_0 )}$ by defining $\overline{u}_i=0$ in $\overline{O}_i$.
Similarly, we can extend $f_i$ to a nonnegative continuous function in
$\overline{B_{\delta}(x_0 )}$ by defining $f_i=0$ in $\overline{O}_i$.
Then, $-L_g\overline{u}_i=f_i$  in $\overline{B_{\delta}(x_0 )}$ with $f_i\geq0$.
For $i$ large, we denote by $\overline{m}_i$
the minimum of $\overline{u}_i$
in $\overline{B_{\delta}(x_0 )}\setminus B_{\delta/2}(x_0 )$.
Then, $\overline{m}_i\le m_i$.
For any $r\in (0,\delta)$, by the Harnack inequality, we have
$$\overline{u}_i \leq C {m}_i
\quad\text{in } \overline{B_{\delta}(x_0)}\setminus B_{r}(x_0),$$
where $C$ is a positive constant depending only on $n$, $\delta$, $r$, and $M$.
By the maximum principle, we get
\begin{equation}\label{overline u_i contral}
\overline{u}_i\leq C {m}_i\quad\text{in }{B_{\delta}(x_0 )},\end{equation}
for some positive constant $C$ independent of $i$ large.
Next, we write \eqref{eq-equation-i} as
\begin{equation*}%\label{eq-equation-i-2}
-L_g\Big(\frac{\overline{u}_i}{m_i}\Big)=m_i^{\frac{4}{n-2}}\overline{f}_i\quad\text{in }\Omega_i,\end{equation*}
where
\begin{equation*}\overline{f}_i=
\frac14n(n-2)\Big[\Big(\frac{{u}_i}{m_i}\Big)^{\frac{n+2}{n-2}} -\Big(\frac{\widetilde{u}_i}{m_i}\Big)^{\frac{n+2}{n-2}}\Big].\end{equation*}
Note that $\overline{f}_i$ is uniformly bounded in $B_\delta(x_0)\setminus B_r(x_0)$, for any $r\in (0,\delta)$.
By taking a subsequence if necessary,
we have, for any positive integer $m$,
\begin{equation}\label{not sphere u_i difference-behavioer}
\frac{\overline{u}_{i}}{{m}_{i} }\rightarrow b
\quad\text{in }C^{m}_{\mathrm{loc}}(\overline{B_\delta(x_0)}\setminus\{x_0\})
\quad\text{as }i\rightarrow\infty,\end{equation}
where $b$ is a smooth nonnegative function in $\overline{B_{\delta}(x_0 )}\setminus\{x_0\}$ with
\begin{equation*}L_g b=0\quad\text{in } \overline{B_{\delta}(x_0 )}\setminus\{x_0\}. \end{equation*}
By \eqref{overline u_i contral}, $b$ is a bounded function in $B_{\delta}(x_0 )\setminus\{x_0\}$.
Hence, $b$ has a removable singularity at $x_0$.
In other words, $b$ can be extended to a smooth nonnegative function in $\overline{B_{\delta}(x_0 )}$ with
\begin{equation*}L_g b=0\quad\text{in } \overline{B_{\delta}(x_0 )}. \end{equation*}
By the strong maximum principle, $b$ is either identically zero or positive in $B_{\delta}(x_0 )$.

By \eqref{not sphere u_i behavioer} and \eqref{not sphere u_i difference-behavioer}, we have
\begin{equation}\label{not sphere u_i behavioer21}
\frac{u_{i}}{{m}_i }\rightarrow w
\quad\text{in }C^{m}_{\mathrm{loc}}({B_{\delta}(x_0 )}\setminus\{x_0\})
\text{ as }i\rightarrow\infty,\end{equation}
where
\begin{equation}\label{w expression-v000}w=a G_{x_0}+b.\end{equation}
We point out that $a$ is a nonnegative constant
and that $b$ is either identically zero or a positive smooth function in $B_{\delta}(x_0 )$.
Note that $w\geq1$ in $\overline{B_{\delta}(x_0 )}\setminus B_{\delta/2}(x_0 )$.
We cannot have $a=0$ and $b\equiv 0$ simultaneously.
 Similarly as \eqref{not sphere u_i behavioer-v0}, we write
\begin{equation*}
u_i={m}_i\cdot \frac{u_i}{{m}_iw}\cdot w,\end{equation*}
where $u_i/({m}_iw)\to 1$ in $C^{m}_{\mathrm{loc}}(B_\delta(x_0)\setminus \{x_0\})$ as $i\to\infty$
and $w$ is given by \eqref{w expression-v000}.
Applying Lemma \ref{lemma-not the sphere},
we conclude $x_0 \in  \mathcal{B}_{\Gamma,\{\Omega_i\}}$.
\end{proof}

In Case 2 in the proof above, assume that the ratio $\widetilde{m}_i/m_i$ converges, up to a subsequence,
to some number $\lambda$. Then, $\lambda\in [0,1]$.
Moreover, $a>0$ occurs if $\lambda\in (0,1]$,
and $a=0$ occurs if $\lambda=0$.

We note that Theorem \ref{them-rigidity} follows easily from Theorem \ref{nonpositive yam inv} and
Theorem \ref{not the sphere}.

For manifolds conformally
equivalent to the standard sphere, we have the following result.

\begin{lemma}\label{lemma-sphere}
Let $(M, g)=(S^n, g_{S^n})$, $\Gamma$, $\{\Omega_i\}$, and $\{g_i\}$ be as in Assumption \ref{assumption-basic},
$u_i$ be the solution
of \eqref{eq-MEq-i}-\eqref{eq-MBoundary-i},
and $x_0\in\Gamma$ be an isolated point of $\Gamma$.
For some constant $r>0$, assume
\begin{equation}\label{not sphere u_i difference-behavioer0}u_i=m_i{\overline{u}}_i(aG_{x_0}+b) \quad\text{in }\Omega_i\cap B_r(x_0),
\end{equation}
where $\{m_i\}$ is a sequence of positive constants with $m_i\to 0$, $\{\overline{u}_i\}$ is a sequence of functions
with $\overline{u}_i\to 1$ in $C^{m}_{\mathrm{loc}}(B_r(x_0)\setminus \{x_0\})$ as $i\to\infty$ for any $m$,
$a$ is a nonnegative constant, and $b$ is a positive smooth function in $B_r(x_0)$.
Then, $x_0\in \mathcal{B}_{\Gamma,\{\Omega_i\}}$.
\end{lemma}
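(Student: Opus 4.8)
The plan is to mirror the proof of Lemma \ref{lemma-not the sphere}, splitting into the cases $a=0$ and $a>0$. The essential new feature is that on the round sphere the Green's function $G_{x_0}$ carries zero mass, so it is the strict positivity of $b$ (strengthened here relative to Lemma \ref{lemma-not the sphere}) that must supply the positive mass driving the blow-up.

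First I would dispose of the case $a=0$. Then \eqref{not sphere u_i difference-behavioer0} reduces to $u_i=m_i\overline u_i b$, so $u_i/m_i\to b$ in $C^m_{\mathrm{loc}}(B_r(x_0)\setminus\{x_0\})$ with $b>0$, and hence $v_i=u_i^{-2/(n-2)}=m_i^{-2/(n-2)}(\overline u_i b)^{-2/(n-2)}\to\infty$ like $m_i^{-2/(n-2)}$. This is exactly the situation treated geometrically in Case 2.1 of the proof of Theorem \ref{nonpositive yam inv}: choosing normal coordinates at $x_0=0$ and a geodesic segment $\{te_n\}$, one takes $t_i^*e_n\in\partial\Omega_i$, where $v_i$ vanishes with $|\partial_nv_i|\le 1$ by the polyhomogeneous expansion, and lets $t_i$ maximize $\partial_nv_i$ on $[t_i^*,\varepsilon]$. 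Since $v_i(\varepsilon e_n)\to\infty$, one obtains $\partial_nv_i(t_ie_n)>v_i(\varepsilon e_n)/\varepsilon\to\infty$, $\partial_{nn}v_i(t_ie_n)=0$, and $v_i(t_ie_n)\le\varepsilon\,\partial_nv_i(t_ie_n)$, which are precisely the hypotheses \eqref{eq-v_i-first-derivative-infty0}--\eqref{eq-v_icontrol0} of Lemma \ref{lemma-blowup-set2} with $\nu_{\varepsilon i}=e_n$. Hence $x_0\in\mathcal B_{\Gamma,\{\Omega_i\}}$. I note this subcase does not use that $M=S^n$.

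Next I would treat $a>0$, which is where the sphere enters. Since $(S^n,g_{S^n})$ is the equality case of the positive mass theorem, in conformal normal coordinates centered at $x_0$ the Green's function has vanishing constant term $A_G=0$, i.e. $G_{x_0}=c_n|x|^{2-n}+O(|x|)$ with $c_n=1/((n-2)\omega_{n-1})$. Therefore $w=aG_{x_0}+b$ expands as
\begin{equation*}
w=a\,c_n|x|^{2-n}+b(x_0)+O(|x|)\qquad\text{near }x_0,
\end{equation*}
so the effective constant term equals $b(x_0)$, which is \emph{strictly positive} by hypothesis. Working with the scale-invariant metric $\widehat g_i=(u_i/m_i)^{4/(n-2)}g=m_i^{-4/(n-2)}g_i$, one has $\mathrm{Ric}_{g_i}=\mathrm{Ric}_{\widehat g_i}$ and $\widehat g_i\to w^{4/(n-2)}g$ in $C^m_{\mathrm{loc}}(B_\delta(x_0)\setminus\{x_0\})$; since $L_gw=0$, this limit is scalar-flat near $x_0$ and, blown up at $x_0$, is asymptotically flat with mass proportional to $b(x_0)>0$. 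With the positive mass in hand, the argument is then identical to Case 3 in the proof of Theorem 4.3 in \cite{HanShen3}: the positivity of the mass forces some Ricci component of $g_i$ to diverge near $x_0$, whence $x_0\in\mathcal B_{\Gamma,\{\Omega_i\}}$.

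The main obstacle is conceptual rather than computational, and it is precisely the vanishing of $A_G$ on $S^n$. In the non-sphere case the strict positive mass theorem makes the constant term $aA_G$ positive on its own, so the weaker hypothesis $b\ge 0$ suffices; on the sphere $A_G=0$, and the strict positivity of $b$ at $x_0$ is the only available source of positive mass. The point requiring care is therefore to verify that the blow-up mechanism of \cite{HanShen3}, originally fed by the geometric positive mass of $G_{x_0}$, runs equally well when the positive constant in the expansion of $w$ instead comes from the auxiliary function $b$; since that mechanism depends only on the sign of the effective constant term, replacing $aA_G$ by $b(x_0)>0$ is all that is needed.
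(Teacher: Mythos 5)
Your proposal is correct and takes essentially the same route as the paper: the paper's proof of this lemma simply defers to the proof of Lemma \ref{lemma-not the sphere}, i.e., the split into $a=0$ (handled by the mean-value argument along a geodesic, feeding into Lemma \ref{lemma-blowup-set2}) and $a>0$ (handled by Case 3 of Theorem 4.3 in \cite{HanShen3}), with the single emphasized point being exactly the one you identify—that on $S^n$ the Green's function contributes zero mass, so the strictly positive value $b(x_0)$ must supply the positive constant term in the expansion of $w$.
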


The proof is similar as that of Lemma \ref{lemma-not the sphere}.
We emphasize that $b$ is assumed to be a positive function on $B_r(x_0)$.

\begin{theorem}\label{sphere}
 Let $(M, g)=(S^n, g_{S^n})$, $\Gamma$, $\{\Omega_i\}$, and $\{g_i\}$ be as in Assumption \ref{assumption-basic},
and $\Gamma=\Gamma_{1}\bigcup\Gamma_{2}$
for $\Gamma_1$ and $\Gamma_2$ as in \eqref{eq-def-Gamma1-2}.
If $\Gamma$  contains at least two points,
then either $\Gamma\subset \mathcal{B}_{\Gamma,\{\Omega_i\}}$ or
$\Gamma\setminus\{p\}\subset \mathcal{B}_{\Gamma,\{\Omega_i\}}$ for some isolated point $p\in\Gamma$.
\end{theorem}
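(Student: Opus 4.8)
The plan is to reduce to isolated points and then reuse the point-by-point asymptotic analysis from the proof of Theorem \ref{not the sphere}, feeding it into the sphere version of the blow-up criterion, Lemma \ref{lemma-sphere}, instead of Lemma \ref{lemma-not the sphere}. Since $(S^n,g_{S^n})$ has positive Yamabe invariant, Theorem \ref{accumulation points} already places every accumulation point of $\Gamma$ in $\mathcal{B}_{\Gamma,\{\Omega_i\}}$, so it suffices to decide, for each isolated point $x_0\in\Gamma$, whether $x_0\in\mathcal{B}_{\Gamma,\{\Omega_i\}}$. First I would fix such an $x_0$, choose $\delta$ small with $\overline{B_\delta(x_0)}\cap\Gamma=\{x_0\}$, and normalize $u_i$ by the minimum $m_i$ of $u_i$ over the annulus $\overline{B_\delta(x_0)}\setminus B_{\delta/2}(x_0)$. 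Running the splitting $u_i=\widetilde u_i+\overline u_i$ exactly as in Case 2 of the proof of Theorem \ref{not the sphere} yields, along a subsequence,
\[
\frac{u_i}{m_i}\to w=aG_{x_0}+b\quad\text{in }C^m_{\mathrm{loc}}(B_\delta(x_0)\setminus\{x_0\}),
\]
where $a\ge 0$ is a constant and $b\ge 0$ is smooth near $x_0$ with $L_gb=0$; by the strong maximum principle $b$ is either identically zero or strictly positive, and $w\ge 1$ on the annulus rules out $(a,b)=(0,0)$.

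Next I would split into two cases at $x_0$. If $b>0$ near $x_0$, then $w=aG_{x_0}+b>0$ there, and writing $u_i=m_i\cdot\big(u_i/(m_iw)\big)\cdot w$ with $u_i/(m_iw)\to 1$, Lemma \ref{lemma-sphere} applies and gives $x_0\in\mathcal{B}_{\Gamma,\{\Omega_i\}}$. The only remaining possibility is $b\equiv 0$, so that $w=aG_{x_0}$ with $a>0$; this is precisely the borderline zero-mass configuration on the sphere for which Lemma \ref{lemma-sphere} gives no information (the positive mass of $b$ having vanished), so I would single out $x_0$ as a \emph{candidate exceptional point} where blow-up need not occur.

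The genuinely new ingredient, beyond Theorem \ref{not the sphere}, is to show that \emph{at most one} isolated point can be exceptional. For this I would upgrade the local convergence to a global one. Because $\dim\Gamma\le(n-2)/2<n-1$, the set $S^n\setminus\Gamma$ is connected, and since $u_i\to 0$ locally uniformly there, the Harnack inequality makes $u_i/m_i$ locally bounded above and below on $S^n\setminus\Gamma$; hence along a subsequence $u_i/m_i\to w$ on all of $S^n\setminus\Gamma$ with $w>0$ and $L_gw=0$. If $x_0$ is exceptional, then $w=aG_{x_0}$ near $x_0$, and unique continuation on the connected set $S^n\setminus\Gamma$ forces the global limit to equal $aG_{x_0}$ everywhere. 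Now suppose two distinct isolated points $p,q$ were both exceptional, with $u_i/m_i^{(p)}\to a_pG_p$ and $u_i/m_i^{(q)}\to a_qG_q$ ($a_p,a_q>0$) along a common subsequence. Evaluating at any fixed $x_1\in S^n\setminus\Gamma$ gives
\[
\frac{m_i^{(q)}}{m_i^{(p)}}\to\frac{a_pG_p(x_1)}{a_qG_q(x_1)},
\]
and since the left-hand side is independent of $x_1$, this would force $G_p/G_q$ to be constant on $S^n\setminus\Gamma$. This is impossible, because $G_p$ is singular at $p$ and finite at $q$ while $G_q$ is singular at $q$ and finite at $p$. Hence at most one isolated point $p$ is exceptional.

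Finally I would assemble the dichotomy. If no isolated point is exceptional, then every isolated point lies in $\mathcal{B}_{\Gamma,\{\Omega_i\}}$ by the second step and every accumulation point lies in it by Theorem \ref{accumulation points}, so $\Gamma\subseteq\mathcal{B}_{\Gamma,\{\Omega_i\}}$. If a unique exceptional isolated point $p$ exists, then every other isolated point and every accumulation point lies in $\mathcal{B}_{\Gamma,\{\Omega_i\}}$, giving $\Gamma\setminus\{p\}\subseteq\mathcal{B}_{\Gamma,\{\Omega_i\}}$. I expect the main obstacle to be the first step: faithfully transporting the delicate splitting and positive mass argument of \cite{HanShen3} to produce the nonnegative regular part $b$ and to pin down the zero-mass borderline, since that is exactly where the sphere differs from the non-sphere case. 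By comparison, the global step (a Harnack limit together with the non-proportionality of the Green's functions $G_p$ and $G_q$) is comparatively soft.
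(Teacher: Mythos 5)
Your proposal is mathematically sound and reaches the theorem by a genuinely different route from the paper's. Both proofs share the same infrastructure: Theorem \ref{accumulation points} for accumulation points, the splitting $u_i=\widetilde u_i+\overline u_i$ from Case 2 of the proof of Theorem \ref{not the sphere} (which indeed transplants verbatim to $S^n$, since the positive mass input only enters through Lemmas \ref{lemma-not the sphere} and \ref{lemma-sphere}), and Lemma \ref{lemma-sphere} as the blow-up criterion. Where you diverge is the mechanism forcing ``at most one bad isolated point.'' The paper argues pairwise: for two isolated points $x_1,x_2$ it normalizes by the smaller of the two annulus minima, say $m_{i2}\le m_{i1}$ along a subsequence, and uses the quantitative Green's function separation \eqref{delta set} to show that the limit at $x_2$ has a strictly positive regular part---either $a_1G_{x_1}$ restricted near $x_2$, or $b$---so the point with the smaller normalization always blows up. You instead argue by contradiction through a global rigidity statement: two exceptional points would give, along a common subsequence, global limits $a_pG_p$ and $a_qG_q$ of the same sequence under two normalizations, forcing $G_p/G_q$ to be constant on $S^n\setminus\Gamma$, which is absurd. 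Your route needs three additional (standard) ingredients the paper avoids---connectivity of $S^n\setminus\Gamma$, a Harnack-chain upgrade of the local limit to a global one, and unique continuation (or analyticity) for $L_g$-harmonic functions---but it dispenses with the choice of $\delta$ realizing \eqref{delta set} and with the ordering trick on the $m_{ij}$; the paper's version yields slightly more information, namely it identifies which of the two points must blow up.

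One step needs tightening: the common subsequence in your two-exceptional-points argument. As you set it up, $x_0$ is exceptional if \emph{some} subsequence yields the pure Green's function limit $aG_{x_0}$; two points could then be exceptional along disjoint subsequences, and no common subsequence carrying two pure Green limits need exist, so the contradiction does not launch. The repair is to run the argument for two points $p,q\notin\mathcal{B}_{\Gamma,\{\Omega_i\}}$, which is all the theorem requires: since blow-up along any subsequence already implies membership in $\mathcal{B}_{\Gamma,\{\Omega_i\}}$ (the definition takes $\sup_i$ over the full sequence), a point outside the blow-up set has pure Green limit along \emph{every} convergent subsequence. Extracting one subsequence along which both local splittings and both global Harnack limits converge then produces exactly the configuration your rigidity argument excludes, and the assembly becomes: at most one isolated point lies outside $\mathcal{B}_{\Gamma,\{\Omega_i\}}$, which together with Theorem \ref{accumulation points} proves the dichotomy.
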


\begin{proof}
Let  $u_i$ be the solution
of \eqref{eq-MEq-i}-\eqref{eq-MBoundary-i} in $\Omega_i$ and set $v_i=u_i^{-\frac{2}{n-2}}$.
Then,
$g_{i}=u_i^{\frac{4}{n-2}}g=v_i^{-2}g.$
Here and hereafter, we simply denote by $g$ the round metric on $S^n$.

Since the accumulation points of $\Gamma$ belong to $\mathcal{B}_{\Gamma,\{\Omega_i\}}$
by Theorem \ref{accumulation points},
we only need to study isolated points.
Assume $\Gamma$ contains at least two isolated points $x_1$ and $x_2$. We will prove at least one of them
belongs to $\mathcal{B}_{\Gamma,\{\Omega_i\}}$.

Take a sufficiently small positive constant $\delta$ such that
$n\delta<{1}/{10}$, $\overline{B_{\delta}(x_j )}\bigcap\Gamma=\{x_j\}$ for $j=1,2$,
and $\overline{B_{100\delta}(x_1)}$ and $\overline{B_{100\delta}(x_2)}$ are disjoint.
Furthermore, we take $\delta$ small such that
\begin{equation}\label{delta set}G_{x_{j}}(y)>100G_{x_{j}}(z)
\,\,\text{for any } y \in \overline{B_{\delta}(x_{j})} \setminus\{x_{j}\}\text{ and any }
z\in S^n \setminus\overline{B_{100\delta}(x_{j})},\end{equation}
where $G_{x_{j}}\in C^{\infty}(S^n\setminus\{x_j\})$ is the Green's function for the conformal Laplacian $L_{g}$
with a pole at $x_j$, for $j=1,2$.

For each $j=1,2$, since $\Omega_i$ is increasing, we have
$\overline{B_{\delta}(x_j )}\setminus B_{\delta/2}(x_j )\subset \Omega_i$ for $i$ large.
For such $i$, we denote by $m_{ij}$ the minimum of $u_i$ in $\overline{B_{\delta}(x_j )}\setminus B_{\delta/2}(x_j )$.
By Lemma \ref{lemme-convergence1}, $m_{ij}\rightarrow0$  as $i\to\infty$, for $j=1,2$.
Without loss of generality, we assume, for infinitely many $i$,
\begin{equation}\label{minimum order}m_{i1}\geq m_{i2}.\end{equation}
Taking a subsequence if necessary, we assume \eqref{minimum order} holds for all $i$ large.
We now  prove $x_2\in \mathcal{B}_{\Gamma,\{\Omega_i\}}.$
We consider two cases.

{\it Case 1}: $\Gamma=\{x_1,x_2\}$.
Arguing as in Case 1 in the proof Theorem \ref{not the sphere},
we have, for any positive integer $m$,
\begin{equation}\label{sphere u_i behavioer}\frac{u_{i}}{m_{i2} }\rightarrow w
\quad\text{in }C^{m}_{\mathrm{loc}}(S^n\setminus \{x_1,x_2\})
\text{ as }i\rightarrow\infty,\end{equation}
where
\begin{equation}\label{w expression-v000a}{w}=a_1 G_{x_1}+a_2 G_{x_2},\end{equation}
for some nonnegative constants $a_1$ and $a_2$, with $a_1+a_2>0$.

Next, we claim $a_1>0$. We consider two cases. If $a_2=0$, then $a_1>0$.
If $a_2>0$, we also have $a_1>0$. Otherwise, if $a_1=0$, then by \eqref{delta set}, we have
$w(x)\leq {1}/{100}$ in $\overline{B_{\delta}(x_1 )}\setminus B_{\delta/2}(x_1 )$, since
the minimum of $w$ on  $\overline{B_{\delta}(x_2 )}\setminus B_{\delta/2}(x_2 )$ is 1.
However, by \eqref{minimum order}, we have $w\geq 1$ in $\overline{B_{\delta}(x_1 )}\setminus B_{\delta/2}(x_1 )$,
which is a contradiction.
Therefore, we always have $a_1>0$.

We write
\begin{equation*}
u_i={m}_{i2}\cdot \frac{u_i}{{m}_{i2}w}\cdot w\quad\text{in }B_\delta(x_2)\cap \Omega_i,\end{equation*}
where $u_i/({m}_{i2}w)\to 1$ in $C^{m}_{\mathrm{loc}}(B_\delta(x_2)\setminus \{x_2\})$ as $i\to\infty$
and $w$ is given by \eqref{w expression-v000a}.
Note that $a_2\ge 0$ and $a_1 G_{x_1}$ is a positive smooth function in $B_\delta(x_2)$.
Applying Lemma \ref{lemma-sphere},
we conclude $x_2 \in  \mathcal{B}_{\Gamma,\{\Omega_i\}}$.

{\it Case 2}:  $\Gamma\neq \{x_1,x_2\}$.
Set, for $j=1,2$,
$$O_{ij}=B_{\delta}(x_j)\setminus \overline{\Omega}_i,$$
and
$$\widetilde{\Omega}_i=M\setminus\big(\overline{O}_{i1}\bigcup \overline{O}_{i2}\big).$$
Then, $\Omega_i\subseteq\widetilde{\Omega}_i$
and $\overline{O}_{ij}\subseteq B_{\delta/2}(x_j)$ for $i$ large and $j=1,2$,
and $\widetilde{\Omega}_i\rightarrow M\setminus \{x_1, x_2\}$
as $i\rightarrow\infty$.

Let $\widetilde{u}_{i}$ be the solution of \eqref{eq-MEq}-\eqref{eq-MBoundary} for $\Omega=\widetilde{\Omega}_i$.
By the maximum principle, we have $\widetilde{u}_i\leq u_i$ in ${\Omega}_i$ for $i$ large.
As in Case 1, up to a subsequence, we have, for any positive integer $m$,
\begin{equation}\label{sphere u_i behavioer-two}
\frac{\widetilde{u}_{i}}{m_{i2} }\rightarrow a_1 G_{x_1}+a_2 G_{x_2}
\quad\text{in }C^{m}_{\mathrm{loc}}(S^n\setminus \{x_1, x_2\})
\text{ as }i\rightarrow\infty,\end{equation}
where $a_1, a_2$ are nonnegative constants.

Set
$\overline{u}_i=u_i- \widetilde{u}_i$.
As in Case 2 in the proof Theorem \ref{not the sphere}, up to a subsequence,
we have, for any positive integer $m$,
\begin{equation}\label{not sphere u_i difference-behavioer-two}
\frac{\overline{u}_{i}}{{m}_{i2} }\rightarrow b
\quad\text{in }C^{m}_{\mathrm{loc}}(\overline{B_\delta(x_2)}\setminus\{x_2\})
\text{ as }i\rightarrow\infty,\end{equation}
where $b$ is either identically zero or a smooth positive function in $B_{\delta}(x_2)$.

By \eqref{sphere u_i behavioer-two} and \eqref{not sphere u_i difference-behavioer-two},
we have, for any positive integer $m$,
\begin{equation}\label{sphere u_i behavioer2}
\frac{{u}_{i}}{{m}_{i2} }\rightarrow {w}
\quad\text{in }C^{m}_{\mathrm{loc}}(\overline{B_\delta(x_2)}\setminus \{x_2\})
\text{ as }i\rightarrow\infty,\end{equation}
where
\begin{equation}\label{w expression-v000b}{w}=a_1 G_{x_1}+a_2 G_{x_2}+b,\end{equation}
for nonnegative constants $a_1$ and $a_2$ and a smooth function $b$ in $B_{\delta}(x_2)$,
either identically zero or positive in $B_{\delta}(x_2)$.

 We now claim either $a_1>0$ or $b$ is a positive smooth function in $B_{\delta}(x_2)$.
We consider two cases.
For the first case, we assume  $a_1+a_2>0$.
If $a_2=0$, then $a_1>0$.
Consider $a_2>0$.
If $a_1=0$, by a similar argument as in Case 1, we have $b>0$ in $B_\delta(x_0)$.
For the second case, we assume  $a_1+a_2=0$. Then, $a_1=a_2=0$ and $b>0$ in $B_{\delta}(x_2)$.
Therefore, $a_1 G_{x_1}+b$ is a positive smooth function in $B_{\delta}(x_2)$.

We write
\begin{equation*}
u_i={m}_{i2}\cdot \frac{u_i}{{m}_{i2}w}\cdot w\quad\text{in }B_\delta(x_2)\cap \Omega_i,\end{equation*}
where $u_i/({m}_{i2}w)\to 1$ in $C^{m}_{\mathrm{loc}}(B_\delta(x_2)\setminus \{x_2\})$ as $i\to\infty$
and $w$ is given by \eqref{w expression-v000b}.
Note that $a_2\ge 0$ and $a_1 G_{x_1}+b$ is a positive smooth function in $B_\delta(x_2)$.
Applying Lemma \ref{lemma-sphere},
we conclude $x_2 \in  \mathcal{B}_{\Gamma,\{\Omega_i\}}$.
\end{proof}

Theorem \ref{not the sphere} asserts that on manifolds not conformally
equivalent to the standard sphere, all isolated points in the limit set belong to the blow-up set.
While Theorem \ref{sphere} asserts that on manifolds conformally equivalent to the standard sphere,
all but possibly one isolated point belong to the blow-up set.
Theorem \ref{sphere} is optimal. In fact, if the limit set $\Gamma$ consists of two distinct points $p,q\in S^n$,
with different sequences of exhausting domains,
the blow-up sets can be any one of $p$ and $q$, or $\{p,q\}$, or even the entire $S^n$.
If the limit set $\Gamma$ consists of one single point $p\in S^n$,
with different sequences of exhausting domains,
the blow-up sets can be empty or nonempty.
See Section \ref{sec-examples} for details.

\section{Points outside the Limit Set}\label{sec-complement-Gamma}

In this section, we continue to study manifolds with a positive Yamabe invariant.
As we remarked after Lemma \ref{lemma-not the sphere},
for manifolds with a positive Yamabe invariant but not conformally equivalent to the standard sphere,
near isolated points of the limit set $\Gamma$, the blow-up sets contain points not in $\Gamma$.
In certain cases, the blow-up sets contain an entire neighborhood of $x_0$.
This is sharply different from manifolds with a negative Yamabe invariant,
where the blow-up sets are precisely the limit sets according to Theorem \ref{nonpositive yam inv}.
In this section, we study whether the blow-up sets can be the entire manifold.
Throughout this section, we denote by $\lambda(M,[g])$ the Yamabe invariant of $(M, g)$.

We first associate blow-up sets with positive conformal harmonic functions.
For a positive function $w$ on $M\setminus\Gamma$, define
\begin{equation}\label{eq-definition-S-NRF-gamma}
\mathcal{S}(w)
=\text{clos}\{x\in M\setminus\Gamma;  Ric_{w^{\frac{4}{n-2}}g}(x)\neq0 \}.
\end{equation}
Obviously, $\mathcal{S}(w)$ is a closed set in $M$.

\begin{lemma}\label{lemma-not the sphere interier}
 Let $(M, g)$, $\Gamma$, and $\{\Omega_i\}$ be as in Assumption \ref{assumption-basic},
and $\Gamma=\Gamma_{1}\bigcup\Gamma_{2}$
for $\Gamma_1$ and $\Gamma_2$ as in \eqref{eq-def-Gamma1-2}.
If $\lambda(M,[g])>0$,
then there exists a positive solution $w$ of $L_{g}w=0$ in $ M  \setminus\Gamma$ such that
$\mathcal{S}(w)\subseteq \mathcal{B}_{\Gamma,\{\Omega_i\}}$.
 Moreover, $w$ has at least one nonremovable singular point in $M$.
\end{lemma}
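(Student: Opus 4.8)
The plan is to obtain $w$ as a blow-down limit of the rescaled conformal factors $u_i$, and then read off both conclusions from the conformal transformation of the Ricci tensor. Since $\lambda(M,[g])>0$, I first normalize $S_g$ to a positive constant via the Yamabe problem and recall from Lemma \ref{lemme-convergence1} that $u_i\to 0$ in $C^{m}_{\mathrm{loc}}(M\setminus\Gamma)$. I fix a base point $q_0\in M\setminus\Gamma$ and set $m_i=u_i(q_0)$, so $m_i\to 0$. The normalized functions $\hat u_i:=u_i/m_i$ satisfy $-\Delta_g\hat u_i+V_i\hat u_i=0$ with $V_i=\tfrac{n-2}{4(n-1)}S_g+\tfrac14 n(n-2)u_i^{4/(n-2)}$, which is locally bounded uniformly in $i$ because $u_i\to 0$ locally uniformly. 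As $\Gamma$ has Hausdorff dimension at most $(n-2)/2$, the complement $M\setminus\Gamma$ is connected, so the Harnack inequality together with Harnack chains furnishes uniform two-sided bounds for $\hat u_i$ on every compact subset; interior Schauder estimates and $m_i\to 0$ then give, along a subsequence, $\hat u_i\to w$ in $C^{m}_{\mathrm{loc}}(M\setminus\Gamma)$ with $w>0$ and $L_gw=0$ in $M\setminus\Gamma$. This is exactly the local normalization carried out in the proof of Theorem \ref{not the sphere}, now performed globally on $M\setminus\Gamma$.

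Next I would show $\mathcal{S}(w)\subseteq \mathcal{B}_{\Gamma,\{\Omega_i\}}$ by a scaling argument. Write $g_i=u_i^{4/(n-2)}g=m_i^{4/(n-2)}\tilde g_i$, where $\tilde g_i:=\hat u_i^{4/(n-2)}g\to \hat g:=w^{4/(n-2)}g$ in $C^{m}_{\mathrm{loc}}(M\setminus\Gamma)$, so that $|Ric_{\tilde g_i}|_{\tilde g_i}\to |Ric_{\hat g}|_{\hat g}$. Because $m_i^{4/(n-2)}$ is a constant, $Ric_{g_i}=Ric_{\tilde g_i}$ as $(0,2)$-tensors, while the norms rescale by $|Ric_{g_i}|_{g_i}=m_i^{-4/(n-2)}|Ric_{\tilde g_i}|_{\tilde g_i}$. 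Fix any $x\in M\setminus\Gamma$ with $Ric_{\hat g}(x)\neq 0$; then $|Ric_{g_i}(x)|_{g_i}=m_i^{-4/(n-2)}|Ric_{\tilde g_i}(x)|_{\tilde g_i}\to\infty$ since $m_i\to 0$. In particular some Ricci component of $g_i$ at the fixed point $x$ diverges, so the constant sequence $x_m\equiv x$ witnesses $x\in \mathcal{B}_{\Gamma,\{\Omega_i\}}$. As $\mathcal{B}_{\Gamma,\{\Omega_i\}}$ is closed, passing to the closure yields $\mathcal{S}(w)\subseteq \mathcal{B}_{\Gamma,\{\Omega_i\}}$.

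For the final assertion I would argue by contradiction. Suppose $w$ has no nonremovable singular point. I claim $w$ must then extend to a global positive solution: if $w$ were bounded near $\Gamma$, then since $\Gamma$ has Hausdorff dimension at most $(n-2)/2<n-2$ it has zero capacity for the second order operator $L_g$, so the removable singularity theorem extends $w$ across $\Gamma$ to a solution of $L_gw=0$ on all of $M$, and the strong maximum principle keeps the extension strictly positive. By \eqref{confotmal laplacian} with $u\equiv 1$, the identity $L_gw=0$ is equivalent to $S_{\hat g}=0$, so $\hat g=w^{4/(n-2)}g$ is a smooth metric on $M$ with vanishing scalar curvature. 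Evaluating the Yamabe quotient at $\phi\equiv 1$ then forces $\lambda(M,[g])=\lambda(M,[\hat g])\le 0$, contradicting $\lambda(M,[g])>0$. Hence $w$ must blow up near some point $p\in\Gamma$, and such a $p$ is a nonremovable singular point.

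The routine portions are the Harnack and Schauder construction of $w$ and the constant-rescaling identity for the Ricci norm. The main obstacle I expect lies in the last step, namely verifying that the absence of nonremovable singularities genuinely produces a global smooth positive extension. This is precisely where the structural hypothesis on $\Gamma$ is essential: one must invoke removability across the low dimensional set $\Gamma$ (zero $L_g$-capacity from $\dim_H\Gamma\le (n-2)/2$), and check that the local extensions agree on the dense complement $M\setminus\Gamma$ and glue to an honest solution on $M$, so that the conformal scalar-curvature identity can be brought into conflict with the sign of the Yamabe invariant.
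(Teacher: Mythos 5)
Your proposal is correct and follows essentially the same route as the paper: fix a base point, set $m_i=u_i(p)$, extract a $C^{m}_{\mathrm{loc}}$ limit $w$ of $u_i/m_i$ solving $L_gw=0$ in $M\setminus\Gamma$, observe that $g_i=m_i^{\frac{4}{n-2}}\big(u_i/m_i\big)^{\frac{4}{n-2}}g$ is a constant rescaling so that $|Ric_{g_i}|_{g_i}=m_i^{-\frac{4}{n-2}}\big(|Ric_{w^{\frac{4}{n-2}}g}|+o(1)\big)$ diverges wherever $Ric_{w^{\frac{4}{n-2}}g}\neq 0$, and derive the nonremovable singularity from $\lambda(M,[g])>0$. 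Your last paragraph (removability of the low-dimensional set plus the Yamabe-quotient contradiction for a global positive solution of $L_gw=0$) merely spells out what the paper compresses into the single phrase ``due to $\lambda(M,[g])>0$.''
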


\begin{proof}
Let  $u_i$ be the solution
of \eqref{eq-MEq} and \eqref{eq-MBoundary} in $\Omega_i$ and set $v_i=u_i^{-\frac{2}{n-2}}$.
For a fixed point $p\in\Omega_1$, set $m_i=u_i(p)$.
We rewrite the equation for $u_i$ as
\begin{equation*}  -L_g\Big(\frac{u_i}{m_i}\Big) =
\frac14n(n-2){m}_i^{\frac{4}{n-2}}\Big(\frac{u_i}{m_i}\Big)^{\frac{n+2}{n-2}}
\quad\text{in }{\Omega}_i,\end{equation*}
where $L_g$ is the conformal Laplacian of $(M,g)$.
By interior Schauder estimates and ${m}_i\rightarrow0$,
there exist a subsequence $\{{ u}_{i}\}$, still denoted by $\{{ u}_{i}\}$,
and a positive function ${w} \in M\setminus \Gamma$ such that, for any positive integer $m$,
$$\frac{u_i}{m_i}\rightarrow w\quad\text{in }C^{m}_{\mathrm{loc}}( M\setminus \Gamma)
\text{ as }i\rightarrow\infty,$$
and
\begin{equation}\label{eq-equation-h}L_g{w} =0
\quad\text{in  }  M\setminus\Gamma.\end{equation}
We write
\begin{equation*}u_i=m_i\cdot \frac{u_i}{m_iw}\cdot h,\end{equation*}
where $u_i/(m_iw)\to 1$ in $C^{m}_{\mathrm{loc}}(M\setminus \Gamma)$ as $i\to\infty$.
Then,
\begin{equation*}%\label{Ricci relation}
Ric_{u_i^{\frac{4}{n-2}}g}=m_{i}^{-\frac{4}{n-2}}\Big(Ric_{w^{\frac{4}{n-2}}g}+e_i\Big),\end{equation*}
and $e_i\rightarrow0$ as $i\rightarrow\infty$.
Therefore, $\mathcal{S}(w)\subseteq \mathcal{B}_{\Gamma,\{\Omega_i\}}$.
Last, $w$ has at least one nonremovable singular point in $M$ due to $\lambda(M,[g])>0$.
\end{proof}

We point out that the set $\mathcal{S}(w)$ is empty if the conformal metric $w^{\frac{4}{n-2}}g$ is Ricci flat.
For example, if $\Gamma=\{x_0\}$, according to Proposition 9.1 in \cite{LiZhu1999}, we conclude
$w=aG_{x_0}$ for some positive constant $a$, where $G_{x_0}\in C^{\infty}(M \setminus
\{x_0\})$ is the Green's function for the conformal Laplacian $L_g$ with the pole at $x_0$.
In this case, $\mathcal{S}(w)= \emptyset$ if $(M,[g])=(S^n,[g_{S^n}])$.
However, we are more interested in the case that $\mathcal{S}(w)$ is not empty,
and, in particular, that $\mathcal{S}(w)$ is the entire manifold $M$.
To this end, we need to study when the conformal metric $w^{\frac{4}{n-2}}g$ is not Ricci flat.

\smallskip

We first characterize Ricci flat conformal metrics in the Euclidean space.

\begin{lemma}\label{lemme-convergence2z}
Let $\Omega$ be a connected domain in $\mathbb R^n$ and $p$ be a point in $\Omega$. Suppose $u$ is a positive harmonic
function in $\Omega$. Then, the Ricci curvature of $g=u^{\frac{4}{n-2}}g_E$ vanishes in a neighborhood of $p$
if and only if $u=c|x-x_0|^{2-n}$ or $u=c$
in $\Omega$, for some positive constant $c$ and some point $x_0$ in $\mathbb R^n\setminus\Omega$.
\end{lemma}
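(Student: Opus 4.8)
The plan is to work directly with the conformal transformation formula \eqref{Ricci cur in v Manifold0}, taking the background to be the flat Euclidean metric $g_E$ and writing $v=u^{-\frac{2}{n-2}}$, so that $u^{\frac{4}{n-2}}g_E=v^{-2}g_E$. Since $u$ is positive and harmonic, it is real-analytic and nonvanishing, hence $v$ is positive and real-analytic on $\Omega$. Because the Euclidean Ricci tensor vanishes and $g_{kl}=\delta_{kl}$, formula \eqref{Ricci cur in v Manifold0} shows that the Ricci components $R^\Omega_{kl}$ of $g=u^{\frac{4}{n-2}}g_E$ reduce to
$$R^\Omega_{kl}=(n-2)\big[v\,v_{,kl}-\frac12\delta_{kl}|\nabla v|^2\big]+\delta_{kl}\big[v\,\Delta v-\frac{n}{2}|\nabla v|^2\big],$$
where all derivatives are Euclidean. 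I would record the \emph{if} direction first: for $v\equiv$ const we have $\nabla v=0$ and every component vanishes, while for $v=\alpha|x-x_0|^2$ (corresponding to $u=c|x-x_0|^{2-n}$) the Hessian $\nabla^2 v=2\alpha\,I$ is diagonal, killing the off-diagonal components, and the identity $|\nabla v|^2=2(2\alpha)v$ makes the diagonal ones vanish.

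For the \emph{only if} direction I would extract two structural consequences of $R^\Omega_{kl}=0$ near $p$. The off-diagonal equations $(k\ne l)$ give $v\,v_{,kl}=0$, hence $v_{,kl}=0$ since $v>0$; the differences of the diagonal equations give $(n-2)v(v_{,kk}-v_{,ll})=0$, hence all pure second derivatives coincide. Together these say the Euclidean Hessian is pure trace, $\nabla^2 v=\frac{\Delta v}{n}\,g_E$. I would then show $\Delta v$ is constant: writing $\psi=\Delta v/n$ and differentiating $v_{,ij}=\psi\delta_{ij}$, the symmetry of third derivatives gives $\psi_{,k}\delta_{ij}=\psi_{,i}\delta_{kj}$, and choosing $i=j\ne k$ (possible as $n\ge3$) forces $\psi_{,k}=0$ for every $k$. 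Thus $\nabla^2 v=\mu\,g_E$ for a constant $\mu$, and integrating yields $v=\frac{\mu}{2}|x|^2+\langle b,x\rangle+c$ for a constant vector $b$ and constant $c$ on a neighborhood of $p$.

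Finally I would feed this back into a single diagonal equation. With $v_{,kk}=\mu$ and $\Delta v=n\mu$, the vanishing of $R^\Omega_{kk}$ collapses to $(n-1)\big(2\mu v-|\nabla v|^2\big)=0$, that is $|\nabla v|^2=2\mu v$. If $\mu=0$, then $|\nabla v|^2\equiv0$, so $v$ and hence $u$ are constant. If $\mu\ne0$, comparing the quadratics $|\nabla v|^2=|\mu x+b|^2$ and $2\mu v$ gives $|b|^2=2\mu c$, and completing the square yields $v=\frac{\mu}{2}|x-x_0|^2$ with $x_0=-b/\mu$; positivity of $v$ then forces $\mu>0$, whence $u=v^{-\frac{n-2}{2}}=c|x-x_0|^{2-n}$. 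To pass from a neighborhood of $p$ to all of $\Omega$, I would invoke real-analyticity of $v$ together with connectedness of $\Omega$: $v$ agrees with a fixed polynomial on a nonempty open set and hence on all of $\Omega$ by the identity theorem, so the stated form of $u$ holds on $\Omega$; since $u$ is finite and positive there, the pole $x_0$ must lie in $\mathbb{R}^n\setminus\Omega$.

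I expect the genuinely delicate points to be the propagation step, namely justifying that the local identity for $v$ near $p$ upgrades to a global one via real-analyticity, and the bookkeeping that isolates $|\nabla v|^2=2\mu v$ and uses positivity to exclude $\mu\le0$ in the nonconstant case. The algebra of completing the square and the symmetry argument for constancy of $\Delta v$ are routine.
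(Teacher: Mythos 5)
Your proof is correct, and it reaches the same intermediate target as the paper's proof (namely that $v=u^{-\frac{2}{n-2}}$ must locally be $a|x-x_0|^2+c_0$ with $a\,c_0=0$) by a genuinely different mechanism in the key middle step. The paper, after using $\Delta u=0$ to reduce \eqref{Ricci cur  in v Manifold0} to $R_{ii}=(n-2)vv_{ii}-\frac{n-2}{2}|\nabla v|^2$, invokes real-analyticity to expand $v$ near $p$ as $a_0+\sum_i a_1^ix_i+a_2r^2+\sum_{k\ge3}a_kr^kg_k(\theta)$ and kills every term of degree $\ge 3$ by comparing $\partial_{11}v$ and $\partial_{22}v$ at points $p+re_1$ where $g_k$ attains a positive maximum (resp.\ negative minimum). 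You instead extract the pointwise structure $\nabla^2v=\psi\,g_E$ near $p$ from the off-diagonal equations together with the diagonal differences, prove $\psi$ is constant via the Schwarz symmetry of third derivatives, and then integrate and complete the square. Your route buys several things: the middle step needs only $C^3$ regularity, with analyticity entering solely in the final propagation across $\Omega$; it exploits the off-diagonal Ricci vanishing on a whole neighborhood of $p$, whereas the paper uses it only at $p$ after a rotation and must then work harder with the expansion; and it is insensitive to whether one pre-simplifies the conformal formula by harmonicity, since your derived identities $\Delta v=n\mu$ and $|\nabla v|^2=2\mu v$ automatically satisfy $v\Delta v=\frac n2|\nabla v|^2$. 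What the paper's route buys is that it operates with a single scalar identity involving only diagonal components. Two minor remarks: choosing $i=j\ne k$ in your symmetry argument requires only $n\ge2$, not $n\ge3$; and your exclusion of $x_0\in\Omega$ is airtight, since it would force $v(x_0)=0$, contradicting $v=u^{-\frac{2}{n-2}}>0$ on $\Omega$.
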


\begin{proof}
Set $v=u^{-\frac{2}{n-2}}$ and consider the conformal metric $g=v^{-2}g_E$.
By \eqref{Ricci cur  in v Manifold0} and $\Delta u=0$, the Ricci curvature of $g$ satisfies
\begin{equation}\label{Ricci cur  in v}
R_{ii}=(n-2)vv_{ii}-\frac{n-2}{2}|\nabla v|^{2}.\end{equation}
 It is clear that the if part holds.  We now prove the only if part.

Assume the Ricci curvature of $g$ vanishes in a neighborhood of $p$. We have $v_{11}=\cdots=v_{nn}$ near $p$.
By a rotation, we have $v_{ij}=0$ at $p$ when $i\neq j$.
By the analyticity, we have $$v=a_0+\sum_{i=1}^{n}a_1^ix_i+a_2r^2+a_3r^3g_3(\theta)+...+a_kr^kg_k(\theta)+...\quad\text{near }p,$$
where $r=|x-p|$ and $\theta\in S^{n-1}$.
We will prove $g_k=0$ for $k\geq 3$.

Assume $g_k\neq0$, for some $k\geq3$, and $g_i=0$ for $3\leq i<k$. If $g_k$ has a positive maximum, without loss
of generality, assume $e_1$ is a maximum point. Then,
$$\partial_{11}v(p+re_1)=2a_2+k(k-1)r^{k-2}g(e_1)+O(r^{k-1}).$$ However,
$$\partial_{22}v(p+re_1)\leq 2a_2+kr^{k-2}g(e_1)+O(r^{k-1}),$$
which contradicts $v_{11}=v_{22}$ when $r$ is small.
Similarly, we can get a contradiction when $g_k$ has a negative minimum.
Hence,
$$v=a_2|x-x_0|^2+c_0\quad\text{near }p,$$
where $a_2$ and $c_0$ are two constants and $x_0$ is some point in $\mathbb R^n$.
Substituting into \eqref{Ricci cur  in v}, we have either $c_0=0$ or $a_2=0$. Therefore,
$u=c|x-x_0|^{2-n}$ or $u=c$
near $p$, where $c$ is some positive constant and $x_0$ is some point in $\mathbb R^n$. By the unique continuation property of $u$, we have $u=c|x-x_0|^{2-n}$ or $u=c$
in $\Omega$. In the first case, we obviously have $x_0\notin \Omega$.
\end{proof}

By the conformal invariance property \eqref{confotmal laplacian}, we have the following result.

\begin{corollary}\label{cor-limit case}
Let $(M, g)=(S^n, g_{S^n})$, $\Omega$ be a connected domain in $S^n$, and $p$ be a point in $\Omega$.
Suppose $u$ is a positive solution
of $L_{g_{S^n}} u=0$ in $\Omega.$
Then, the Ricci curvature of $g=u^{\frac{4}{n-2}}g_{S^n}$ vanishes in a neighborhood of $p$ if and only if $u=aG_{x_0}$
in $\Omega$, for some positive constant $a$ and  some point $x_0$ in $S^n\setminus\Omega$.
\end{corollary}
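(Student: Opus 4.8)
The plan is to reduce the statement on $S^n$ to the Euclidean statement of Lemma~\ref{lemme-convergence2z} by means of stereographic projection, exploiting the conformal invariance \eqref{confotmal laplacian} of the conformal Laplacian. First I would observe that $\Omega\neq S^n$: since $\lambda(S^n,[g_{S^n}])>0$, there is no positive solution of $L_{g_{S^n}}u=0$ on the whole sphere, so $S^n\setminus\Omega$ is nonempty and I may fix a point $N\in S^n\setminus\Omega$. Let $\pi\colon S^n\setminus\{N\}\to\mathbb R^n$ be the stereographic projection from $N$, and write $g_{S^n}=\psi^{\frac{4}{n-2}}g_E$ on $S^n\setminus\{N\}$ under this identification, where $\psi>0$ is the standard conformal factor. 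Since $p\neq N$, the image $\Omega'=\pi(\Omega)$ is a connected domain in $\mathbb R^n$ containing the interior point $p'=\pi(p)$.

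Next I would set $U=\psi\,u$. Applying \eqref{confotmal laplacian} with the base metric $g_E$ (so that $S_{g_E}=0$ and $L_{g_E}=-\Delta$) gives $-\Delta U=L_{g_E}(\psi u)=\psi^{\frac{n+2}{n-2}}L_{g_{S^n}}u=0$, so $U$ is a positive harmonic function on $\Omega'$. The crucial point is that the metric itself is left unchanged: because $u^{\frac{4}{n-2}}g_{S^n}=(u\psi)^{\frac{4}{n-2}}g_E=U^{\frac{4}{n-2}}g_E$, the Ricci curvature of $g=u^{\frac{4}{n-2}}g_{S^n}$ vanishes near $p$ if and only if the Ricci curvature of $U^{\frac{4}{n-2}}g_E$ vanishes near $p'$. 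By Lemma~\ref{lemme-convergence2z}, this holds if and only if $U=c\,|x-x_0'|^{2-n}$ or $U=c$ on $\Omega'$, for some constant $c>0$ and some $x_0'\in\mathbb R^n\setminus\Omega'$.

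It then remains to translate these two normal forms back to the sphere. I would use the conformal covariance of the Green's function: if $x_0=\pi^{-1}(x_0')$, then $\psi\,G_{x_0}$ equals a positive constant multiple of the Euclidean fundamental solution $|x-x_0'|^{2-n}$, so the case $U=c\,|x-x_0'|^{2-n}$ yields $u=U/\psi=a\,G_{x_0}$ with $a>0$ and $x_0\in S^n\setminus\Omega$. For the case $U=c$, the constant corresponds to the singularity at infinity, i.e. $1/\psi$ is a constant multiple of $G_N$, so $u=a\,G_N$ with $N\in S^n\setminus\Omega$ by our choice of $N$; thus in both cases $u=a\,G_{x_0}$ for some $a>0$ and some $x_0\in S^n\setminus\Omega$. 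The converse \emph{if} direction follows by running the same identification in reverse, or directly from the fact that $G_{x_0}^{\frac{4}{n-2}}g_{S^n}$ is the flat metric on $S^n\setminus\{x_0\}\cong\mathbb R^n$ and hence Ricci flat on $\Omega$. I expect the main obstacle to be the bookkeeping at the point at infinity: one must verify carefully that the Euclidean constant solution pulls back to the Green's function $G_N$ with pole at the projection point $N$, and that the choice $N\in S^n\setminus\Omega$ forces $x_0\notin\Omega$ in that borderline case, mirroring the exclusion $x_0'\notin\Omega'$ supplied by Lemma~\ref{lemme-convergence2z} in the finite case.
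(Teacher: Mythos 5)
Your proof is correct and follows essentially the same route as the paper: the paper's entire proof is the one-line remark that the corollary follows from Lemma \ref{lemme-convergence2z} via the conformal invariance \eqref{confotmal laplacian}, and your argument (stereographic projection from a point $N\in S^n\setminus\Omega$, the identity $u^{\frac{4}{n-2}}g_{S^n}=(\psi u)^{\frac{4}{n-2}}g_E$, and the identification of the constant solution with $G_N$ and of $c|x-x_0'|^{2-n}$ with $aG_{x_0}$) is precisely the detail that one-liner compresses. Your preliminary observation that $\Omega\neq S^n$, and the careful bookkeeping at the point at infinity, are correct and complete the reduction.
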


If $(M, g)=(S^n, g_{S^n})$, we have $\mathcal{B}_{\Gamma,\{\Omega_i\}}=S^n$
under appropriate conditions. For example, for the function $w$ as in Lemma \ref{lemma-not the sphere interier},
if $w$ is not in the form $aG_{x_0}$
for any positive constant $a$ and any point $x_0$ in $\Gamma$,
then Corollary \ref{cor-limit case} asserts that the Ricci curvature of $w^{\frac{4}{n-2}}g_{S^n}$ does not vanish identically
near any point. As a consequence, the set $\mathcal{S}(w)$ defined in \eqref{eq-definition-S-NRF-gamma}
is the entire manifold $S^n$.

We now prove the main result in this section.

\begin{theorem}\label{LCF}
Let $(M, g)$, $\Gamma$, and $\{\Omega_i\}$ be as in Assumption \ref{assumption-basic},
and $\Gamma=\Gamma_{1}\bigcup\Gamma_{2}$
for $\Gamma_1$ and $\Gamma_2$ as in \eqref{eq-def-Gamma1-2}.
Assume, in addition, that $(M, g)$ is a locally conformally flat manifold with $0<\lambda(M,[g])< \lambda(S^n,[g_{S^n}])$.
Then, $\mathcal{B}_{\Gamma,\{\Omega_i\}}=M$.
\end{theorem}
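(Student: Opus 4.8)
The plan is to produce, via Lemma \ref{lemma-not the sphere interier}, a single positive solution $w$ of $L_gw=0$ in $M\setminus\Gamma$ with $\mathcal{S}(w)\subseteq\mathcal{B}_{\Gamma,\{\Omega_i\}}$ and at least one nonremovable singular point, and then to upgrade the inclusion $\mathcal{S}(w)\subseteq M$ to the equality $\mathcal{S}(w)=M$. Since $\mathcal{S}(w)\subseteq\mathcal{B}_{\Gamma,\{\Omega_i\}}\subseteq M$, this yields $\mathcal{B}_{\Gamma,\{\Omega_i\}}=M$. Equivalently, writing $\hat g=w^{\frac{4}{n-2}}g$, I must show that the Ricci curvature $Ric_{\hat g}$ vanishes on no nonempty open subset of $M\setminus\Gamma$.

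The first step is a propagation-by-analyticity argument made possible by local conformal flatness. Around each point of $M\setminus\Gamma$ I choose a conformal chart $g=\phi^{\frac{4}{n-2}}g_E$. By the conformal invariance \eqref{confotmal laplacian} and $L_{g_E}=-\Delta$, the function $u:=\phi w$ is Euclidean-harmonic, hence real-analytic, and $\hat g=u^{\frac{4}{n-2}}g_E$ in this chart; in particular the components of $\hat g$, and therefore $Ric_{\hat g}$, are real-analytic in each such chart. Since the transition between two conformal charts is a conformal diffeomorphism of Euclidean domains, hence a real-analytic (Möbius) map by Liouville's theorem for $n\ge 3$, these charts endow $M\setminus\Gamma$ with a real-analytic structure in which $Ric_{\hat g}$ is a real-analytic tensor. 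Because the Hausdorff dimension of $\Gamma$ is at most $(n-2)/2<n-1$, the set $M\setminus\Gamma$ is connected, so if $Ric_{\hat g}$ vanished on some nonempty open set it would vanish identically on all of $M\setminus\Gamma$. Thus it suffices to rule out $Ric_{\hat g}\equiv 0$ on $M\setminus\Gamma$.

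Suppose $Ric_{\hat g}\equiv 0$. I fix a nonremovable singular point $x_*\in\Gamma$ of $w$ and work in a conformal chart centered at $x_*$. There $u=\phi w$ is a positive harmonic function with a nonremovable singularity at $x_*$ satisfying $Ric_{u^{4/(n-2)}g_E}=0$, so Lemma \ref{lemme-convergence2z} forces the exact identity $u=c\,|x-x_*|^{2-n}$, the constant alternative being excluded by the singularity. Consequently $\hat g=c'\,|x-x_*|^{-4}g_E$ near $x_*$, which after the inversion $x\mapsto (x-x_*)/|x-x_*|^2$ becomes an exactly Euclidean end, so the asymptotically flat metric $\hat g$ carries zero ADM mass at $x_*$. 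By the positive mass theorem, valid here because $M$ is locally conformally flat, the mass of such an end is nonnegative and vanishes only when $(M,g)$ is conformally equivalent to $(S^n,g_{S^n})$. Hence $(M,[g])=(S^n,[g_{S^n}])$, contradicting $\lambda(M,[g])<\lambda(S^n,[g_{S^n}])$. Therefore $Ric_{\hat g}$ vanishes on no open set, $\mathcal{S}(w)=M$, and $\mathcal{B}_{\Gamma,\{\Omega_i\}}=M$.

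The main obstacle is the last step, where applying the positive mass theorem requires a genuine complete asymptotically flat manifold with a single end at $x_*$, while a priori $w$ may have further singularities along $\Gamma$. The plan is to handle this as in Case 3 of the proof of Theorem 4.3 in \cite{HanShen3}: when $\Gamma=\{x_*\}$, the function $w$ is a constant multiple of the Green's function $G_{x_*}$, since any globally regular $L_g$-harmonic function vanishes when $\lambda(M,[g])>0$, so $(M\setminus\{x_*\},\hat g)$ is complete asymptotically flat with one end and the theorem applies directly. In the presence of additional singular points, the global flatness of $\hat g$ combined with the injectivity of the developing map for compact locally conformally flat manifolds of positive Yamabe invariant forces a single Euclidean end, reducing matters to the one-point case. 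Making this reduction precise, and verifying that the exact form $u=c|x-x_*|^{2-n}$ indeed encodes the vanishing of the relevant mass, is where the substantive work lies.
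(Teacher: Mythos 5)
Your first two steps are correct and are organized genuinely differently from the paper: reducing the theorem, via real-analyticity of $Ric_{\hat g}$ in conformally flat charts and connectedness of $M\setminus\Gamma$, to excluding $Ric_{\hat g}\equiv 0$, and then pinning down $u=c|x-x_*|^{2-n}$ near the nonremovable singular point $x_*$ by Lemma \ref{lemme-convergence2z}. (The paper instead lifts $w$ to the universal cover, uses the Liouville-type results of \cite{ChodoshLi2020} and \cite{LesourdUngerYau2021} to realize $\widetilde M$ conformally as a domain in $S^n$, observes that the nontrivial covering forces at least two nonremovable singular points, and then contradicts the single-pole Green's function dichotomy of Corollary \ref{cor-limit case}; your analytic-continuation step is a legitimate substitute for the unique-continuation content of that corollary.)

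The genuine gap is the final step, and it is the one you flag yourself: the positive mass theorem is applied to $(M\setminus\Gamma,\hat g)$ without establishing that this manifold is complete and asymptotically flat. A priori $w$ can degenerate arbitrarily at the points of $\Gamma$ other than $x_*$ (and $\Gamma$ need not be finite or even countable), so there is no ``compact core plus AF ends'' structure to which any version of the theorem applies; this is precisely the regime where one would otherwise need a PMT with arbitrary ends. Your proposed repair---``global flatness plus injectivity of the developing map forces a single Euclidean end''---is an assertion, not an argument: developing-map injectivity is exactly the deep input (\cite{SchoenYau1988}, \cite{ChodoshLi2020}, \cite{LesourdUngerYau2021}) on which the paper's own proof runs, and you would still need to control the other singularities on the image domain, which is what Corollary \ref{cor-limit case} does there. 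The irony is that the gap closes with the tool you already used: under the hypothesis $Ric_{\hat g}\equiv 0$ on $M\setminus\Gamma$, apply Lemma \ref{lemme-convergence2z} in a conformal chart $B_q$ around \emph{every} $q\in\Gamma$, not only around $x_*$; since $B_q\setminus\Gamma$ is connected, $\phi w$ is there either constant or an exact pole $c|x-x_0|^{2-n}$, so each point of $\Gamma$ is either a removable singularity of $w$ or an isolated exact pole. By compactness of $\Gamma$, $w$ then extends to $M\setminus\{p_1,\dots,p_k\}$ with $k\ge1$ exact poles, and $(M\setminus\{p_1,\dots,p_k\},\hat g)$ is a complete flat manifold with $k$ exactly Euclidean ends; PMT rigidity (or, more elementarily, the structure theory of complete flat manifolds: a Euclidean end forces the quotient group to be trivial) gives $k=1$ and $(M,g)$ conformally equivalent to $(S^n,g_{S^n})$, contradicting $\lambda(M,[g])<\lambda(S^n,[g_{S^n}])$. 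Without this (or an equivalent) structure step at the remaining points of $\Gamma$, the proof as written is incomplete.
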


\begin{proof}
Let $w$ be as in Lemma \ref{lemma-not the sphere interier}. Then, $w$ has at least one nonremovable singular point in $M$ due to the
fact $\lambda(M,[g])>0$. We will prove $\mathcal{S}(w)=M$.

Let $\widetilde{M}$ be the universal covering of $M$ and $\pi:\widetilde{M}\rightarrow M$ be a covering map.
By the Liouville result as Corollary 4 in \cite{ChodoshLi2020} and Theorem 1.3 in \cite{LesourdUngerYau2021},
$\widetilde{M}$ is conformally equivalent to
 a domain in $S^n$ with boundary of zero Newtonian capacity.
 Since $M$ is compact and $\lambda(M,[g])< \lambda(S^n,[g_{S^n}])$, $\widetilde{M}$ is a nontrivial covering of $M$.
Hence, $\pi^*w=w\circ \pi$
has more than one nonremovable singular point in $\widetilde{M}$. Let $\Phi:\widetilde{M}\rightarrow\Phi(\widetilde{M})\subseteqq S^n$ be a developing map and $\widetilde{g}=\Phi^*(\rho^{\frac{4}{n-2}}g_{S^n})$ for some positive function $\rho$ on $\Phi(\widetilde{M})$, where $\widetilde{g}=\pi^*g$.
 By Corollary \ref{cor-limit case}, the Ricci curvature of $(\pi^*w\circ\Phi^{-1})^{\frac{4}{n-2}}\rho^{\frac{4}{n-2}}g_{S^n}$ does not vanish near any point on $\Phi(\widetilde{M})$ and hence the Ricci curvature of $(w\circ\pi)^{\frac{4}{n-2}}\widetilde{g}$ does not vanish near any point on $\widetilde{M}$.
Therefore, $\mathcal{S}(w)=M$.
\end{proof}

We note that Theorem \ref{LCF-rigidity} follows easily from Theorem \ref{LCF}.

\section{Examples}\label{sec-examples}

In this section, we present several examples of blow-up sets on $S^{n}$.
One of our main interests is the case that the limit set contains isolated points and
we investigate whether blow-up occurs at these isolated points.
 When the limit set consists of two distinct points $p_1, p_2\in S^n$, we will construct
different sequences of increasing smooth domains $\Omega_i$ in $S^n\setminus\{p_1, p_2\}$,
which converges to $S^n \setminus \{p_1, p_2\}$, such that the associated blow-up set
$\mathcal B_{\{p_1, p_2\},\{\Omega_i\} }=\{p_2\}$, or $\{p_1, p_2\}$, or the entire $S^n$.
When the limit set consists of one point $p\in S^n$, we will construct
different sequences of increasing smooth domains $\Omega_i$ in $S^n\setminus\{p\}$,
which converges to $S^n \setminus \{p\}$, such that the associated blow-up set
$\mathcal B_{\{p\},\{\Omega_i\} }$ is empty,  consists of the single point $p$, or contains at least one point
 other than $p$.
These examples demonstrate the complexity of blow-up sets on $S^n$.

For simplicity, we first construct examples in $\mathbb R^n$.
If $(M,g)=(\mathbb R^n, g_E)$,
then \eqref{eq-MEq} and \eqref{eq-MBoundary} reduce to
\begin{align}
\label{eq-MainEq} \Delta u  &= \frac14n(n-2) u^{\frac{n+2}{n-2}} \quad\text{in }\,\Omega,\\
\label{eq-MainBoundary}u&=\infty\quad\text{on }\partial \Omega.
\end{align}

\begin{example}\label{exa-blow up one point}
Let $(M,g)=(\mathbb R^n, g_E)$ and $p_1, p_2\in\mathbb R^n$.
We will construct a sequence of increasing smooth domains $\{\Omega_i\}$
in $\mathbb R^n\setminus\{p_1, p_2\}$,
which converges to $\mathbb R^n\setminus\{p_1, p_2\}$, such that
$\mathcal B_{\{p_1, p_2\},\{\Omega_i\} }=\{p_2\}.$

To this end, let $\{r_i\}$ be a positive decreasing sequence with $r_i\to 0$.
Set $\hat{r}_i=r_{i}^k$ for some $k>0$ sufficiently large, and
$$\Omega_i=\mathbb R^n\setminus(\overline{ B_{r_i}(p_1)}\bigcup \overline{B_{\hat{r}_i}(p_2)}).$$
Then, $\Omega_i$ is a sequence of increasing smooth domains, which converges to $\mathbb R^n \setminus\{p_1,p_2\}$.
Let $u_i$ be the solution of \eqref{eq-MainEq}-\eqref{eq-MainBoundary}
for $\Omega=\Omega_{i}$
and set $g_i=u_{i}^{\frac{4}{n-2}}|dx|^2$.
Moreover, set
$$\Omega_{1i}=\mathbb R^n\setminus \overline{B_{r_i}(p_1}),
\quad\Omega_{2i}=\mathbb R^n\setminus\overline{ B_{\hat{r}_i}(p_2)}.$$
Let $u_{1i}$ and $u_{2i}$ be the solution of \eqref{eq-MainEq}-\eqref{eq-MainBoundary}
for $\Omega=\Omega_{1i}$ and $\Omega_{2i}$, respectively.
Then,
\begin{equation}\label{eq-SolutionInside1}
u_{1i}(x)=\Big(\frac{2r_i}{|x-p_1|^{2}-r_i^{2}}\Big)^{\frac{n-2}{2}},\end{equation}
and
\begin{equation}\label{eq-SolutionInside2}
u_{2i}(x)=\Big(\frac{2\hat{r}_i}{|x-p_2|^{2}-\hat{r}_i^{2}}\Big)^{\frac{n-2}{2}}.\end{equation}
By the maximum principle and Lemma 2.2 in \cite{hanshen2}, we have
\begin{equation*}
u_{1i}\leq u_i\leq u_{1i} +u_{2i}\quad\text{in }\Omega_i.\end{equation*}
Set $w_i=u_i-u_{1i}$. Then,
\begin{equation*}
0\leq w_{i}\leq u_{2i}\quad\text{in }\Omega_i,\end{equation*}
and
\begin{equation}\label{eq w}
\Delta w_{i}=\frac14n(n-2) u_{1i}^{\frac{n+2}{n-2}}\big[\big(1+u_{1i}^{-1}w_{i}\big)^{\frac{n+2}{n-2}}-1\big]
:=f_{i}(w_{i}).\end{equation}
Then, for any fixed $r>0$ sufficiently small
and for all sufficiently large $i$, we have
\begin{equation}\label{w bound}
w_{i}\leq u_{2i}\leq C(r)r_i^{\frac{n-2}{2}k}\quad\text{in }\,\Omega_i\setminus  \overline{B_{r}(p_2)} ,\end{equation}
for some constant $C(r)$ depending only on $r$ and $n$.

By polyhomogeneous expansions of $u_i$ near $\partial B_{r_i}(p_1)$,
there exists a constant $\delta>0$ independent of $r_i$ such that
$$|R_{pq}^{i}|\leq 2\quad\text{in } B_{(1+\delta)r_i}(p_1)\setminus \overline{B_{r_i}(p_1)},$$
where $R_{pq}^{i}$ is a Ricci curvature component corresponding to the metric $g_i$.
For any fixed $R>0$ sufficiently large,
we have
\begin{equation}\label{u_1 bound}
\Big(\frac{2r_i}{ R^2}\Big)^{\frac{n-2}{2}}
\leq u_{1i} \leq \Big(\frac{\delta }{2}r_i\Big)^{-\frac{n-2}{2}}
\quad\text{in }B_{R}(p_1)\setminus(\overline{B_{(1+{\delta}/{2})r_i}(p_1)}\bigcup \overline{B_{r}(p_2)}).\end{equation}
For any $x\in  B_{R}(p_1)\setminus(\overline{B_{(1+\delta)r_i}(p_1)}\bigcup \overline{B_{r}(p_2)})$,
by interior Schauder estimates, we have
\begin{equation}\label{w_estimate1}
\delta r_i|\nabla w_{i}|_{L^{\infty}(B_{{\delta r_i}/{8}}(x))}\leq C\big(
|w_{i}|_{L^{\infty}(B_{{\delta r_i}/{4}}(x))}+(\delta r_i)^2|f_i(w_{i})|_{L^{\infty}(B_{{\delta r_i}/{4}}(x)}\big),\end{equation}
and
\begin{align}\label{w_ estimate2}\begin{split}
(\delta r_i)^2|\nabla^2 w_{i}|_{L^{\infty}(B_{{\delta r_i}/{16}}(x))}&\leq
C\big(|w_{i}|_{L^{\infty}(B_{{\delta r_i}/{8}}(x))}
+(\delta r_i)^2|f_{i}(w_{i})|_{L^{\infty}(B_{{\delta r_i}/{8}}(x))}\\
&\qquad+(\delta r_i)^3|\nabla f_{i}(w_{i})|_{L^{\infty}(B_{{\delta r_i}/{8}}(x))}\big).\end{split}\end{align}
Choose $k$ large such that $(n-2)(k-1)/2>n+3$.
Then, for any $x\in  B_{R}(p_1)\setminus(\overline{ B_{(1+\delta)r_i}(p_1)}\bigcup \overline{B_{r}(p_2)})$,
\begin{equation}\label{w pointwise estimate}
|\nabla w_{i}(x)|\leq C(r,R,\delta)r_i^{\frac{(n-2)k}{2}-1}, \,\,\,|\nabla^2 w_{i}(x)|\leq C(r,R,\delta)r_i^{\frac{(n-2)k}{2}-2},
\end{equation}
where $C(r,R,\delta)$ is a positive constant depending only on $n$, $\delta$, $r$, and $R$.
Substituting \eqref{eq-SolutionInside1}, \eqref{u_1 bound}, and \eqref{w pointwise estimate}
in \eqref{Ricci-cur-in-v-Manifold}, we have
$$|R_{pq}^i|\leq C(r,R,\delta)\quad\text{in }B_{R}(p_1)\setminus(\overline{B_{(1+\delta)r_i}(p_1)}\bigcup\overline{ B_{r}(p_2)}).$$
Hence, for any $x\in\mathbb R^n \setminus\{p_2\}$,
the Ricci curvature of $g_i$ near $x$ does not blow up as $i\rightarrow\infty$. \end{example}

Example \ref{exa-blow up one point} can be generalized easily to more than two points.

\begin{example}\label{exa-blow up m-1 point}
Let $(M,g)=(\mathbb R^n, g_E)$ and $p_1, \cdots, p_m\in\mathbb R^n$ be $m$ distinct points, for some $m\ge 2$.
Then, we can construct a sequence of increasing smooth domains $\{\Omega_i\}$
in $\mathbb R^n\setminus\{p_1, \cdots, p_m\}$,
which converges to $\mathbb R^n\setminus\{p_1, \cdots, p_m\}$, such that
$\mathcal B_{\{p_1, \cdots, p_m\},\{\Omega_i\} }=\{p_2, \cdots, p_m\}.$

The construction is similar as that in Example \ref{exa-blow up one point}.
We only point out that deleted balls around $p_2, \cdots, p_m$ have the same radius,
much smaller than the radius of the ball around $p_1$.
\end{example}

\begin{example}\label{exa-isolated two point blow up}
Let $(M,g)=(\mathbb R^n, g_E)$ and $p_1, p_2\in\mathbb R^n$.
We will construct a sequence of increasing smooth domains $\{\Omega_i\}$
in $\mathbb R^n\setminus\{p_1, p_2\}$,
which converges to $\mathbb R^n\setminus\{p_1, p_2\}$, such that
$\mathcal B_{\{p_1, p_2\},\{\Omega_i\}}=\{p_1, p_2\}.$

To this end, let
$r>0$ be a sufficiently small constant and $k>0$ be a sufficiently large constant.
Set $r_{2i-1}=r^{k^{2i-2}}$, $r_{2i}=r^{k^{2i}}$, $\hat{r}_{2i-1}=r^{k^{2i-1}}$, $\hat{r}_{2i}=r^{k^{2i-1}}$, and
$$\Omega_i= \mathbb R^n\setminus(\overline{B_{r_i}(p_1)}\bigcup \overline{B_{\hat{r}_i}(p_2)}).$$
Then, $\Omega_i$ is a sequence of increasing smooth domains which converges to $\mathbb R^n \setminus\{p_1,p_2\}$.
According to Example \ref{exa-blow up one point}, we have
$\mathcal B_{\{p_1, p_2\},\{\Omega_{2i-1}\}}=\{p_2\}$
and
$\mathcal B_{\{p_1, p_2\},\{\Omega_{2i}\}}=\{p_1\}.$
Therefore,
$\mathcal B_{\{p_1, p_2\},\{\Omega_i\}}=\{p_1, p_2\}.$
\end{example}

\begin{example}\label{exa-isolated point not blow up}
Let $(M,g)=(\mathbb R^n, g_E)$,
$\Gamma$ be a closed smooth embedded submanifolds
in $\mathbb R^n$ of dimension less or equal than $(n-2)/2$, and $p_0\notin \Gamma$.
We will construct a sequence of increasing smooth domains $\{\Omega_i\}$
in $\mathbb R^n\setminus(\Gamma\bigcup\{p_0\})$,
which converges to $\mathbb R^n\setminus(\Gamma\bigcup\{p_0\})$, such that
$\mathcal B_{\Gamma\bigcup\{p_0\},\{\Omega_i\} }=\Gamma.$

Let $\{\Gamma_{i}\}$ be a sequence of decreasing smooth domains  in $\mathbb R^n$
containing $\Gamma$, which converges to $\Gamma$,
and $w_i$ be the solution of \eqref{eq-MainEq}-\eqref{eq-MainBoundary}
for $\Omega=\mathbb R^n \setminus\overline{\Gamma}_i$.
We require $p_0\in \mathbb R^n \setminus\overline{\Gamma}_1$
and set $r_i=w_i(p_0)$. Then, $r_i\rightarrow0$.
For some large constant $k>0$ and $i$ large, set
$$\Omega_i=\mathbb R^n \setminus(\overline{\Gamma}_i\bigcup \overline{B_{r_i^{1/k}}(p_0)}).$$
By arguing as in Example \ref{exa-blow up one point}, we have
$\mathcal B_{\Gamma\bigcup\{p_0\},\{\Omega_i\} }=\Gamma.$
\end{example}

\begin{example}\label{exa-blow up r n}
Let $(M,g)=(\mathbb R^n, g_E)$ and $p_1, p_2\in\mathbb R^n$.
We will construct a sequence of increasing smooth domains $\{\Omega_i\}$
in $\mathbb R^n\setminus\{p_1, p_2\}$,
which converges to $\mathbb R^n\setminus\{p_1, p_2\}$, such that
$\mathcal B_{\{p_1, p_2\},\{\Omega_i\} }=\mathbb R^n.$

To this end, let $\{r_i\}$ be a positive decreasing sequence with $r_i\to 0$.
Set
$$\Omega_i=\mathbb R^n\setminus(\overline{ B_{r_i}(p_1)}\bigcup \overline{B_{{r}_i}(p_2)}),$$
and
$$\Omega_{1i}=\mathbb R^n\setminus \overline{B_{r_i}(p_1}),
\quad\Omega_{2i}=\mathbb R^n\setminus\overline{ B_{{r}_i}(p_2)}.$$
Then, $\Omega_i$ is a sequence of increasing smooth domains which converges to $\mathbb R^n \setminus\{p_1,p_2\}$.
Denote by $u_i$, $u_{1i}$, and $u_{2i}$ the solution of \eqref{eq-MainEq}-\eqref{eq-MainBoundary}
for $\Omega=\Omega_i$, $\Omega_{1i}$, and $\Omega_{2i}$, respectively.
By the maximum principle and Lemma 2.2 in \cite{hanshen2}, we have
\begin{equation}\label{u_i bound}
\max\{u_{1i},u_{2i}\} \leq u_i\leq u_{1i} +u_{2i}\quad\text{in }\Omega_{i}.\end{equation}
In particular, for a fixed point $p\in\mathbb R^n \setminus\{p_1,p_2\}$, we have
\begin{equation}\label{u_i bound2}
C^{-1}r_i^{\frac{n-2}{2}}\leq u_i(p)\leq Cr_i^{\frac{n-2}{2}},\end{equation}
when $i$ large, and
\begin{equation}\label{u_i bound3}
r_i^{-\frac{n-2}{2}}u_i(x)\rightarrow 0  \quad\text{as } x\rightarrow \infty
\text{ uniformly for }i,\end{equation}
where $C$ is a positive constant depending only on $n$, $|p-p_1|$ and $|p-p_2|$.

Set $m_i=u_i(p)$. Then, $m_i\to 0$ as $i\to\infty$.
Set $w_i=u_i/m_i$. Then,
\begin{equation*}\Delta w_{i} =
\frac14n(n-2)m_i^{\frac{4}{n-2}} w_{i}^{\frac{n+2}{n-2}} \quad\text{in }\,\Omega_i.\end{equation*}
By interior Schauder estimates and the Harnack inequality, there exist a subsequence of $\{w_i\}$,
still denoted by $\{w_i\}$,  and a positive function $w\in \mathbb R^n\setminus \{p_1,p_2\}$ such that, for any $m$,
$$w_{i}\rightarrow w\quad\text{in }C^{m}_{\mathrm{loc}}(  \mathbb R^n\setminus \{p_1,p_2\})
\text{ as }i\rightarrow\infty,$$
and
\begin{equation*}\Delta w=0  \quad\text{in  } \mathbb R^n\setminus \{p_1,p_2\}.\end{equation*}
By \eqref{u_i bound} and  \eqref{u_i bound3},
according to Bocher's Theorem and Liouville's Theorem,
 \begin{equation}\label{w expression-v3}
w=a_1|x-p_1|^{2-n} + a_2|x-p_2|^{2-n}\quad\text{in }\,\mathbb R^n\setminus \{p_1,p_2\},\end{equation}
where $a_1$ and $a_2$ are some positive constants.
Therefore, we can write
$$u_i=m_i\cdot \frac{w_i}{w}\cdot w,$$
where $w_i/w\rightarrow 1$ in $C^{m}_{\mathrm{loc}}( \mathbb R^n\setminus \{p_1,p_2\})$
as $i\rightarrow\infty.$
Then, by \eqref{Ricci-cur-in-v-Manifold} and a straightforward computation,
it is easy to verify $\mathcal B_{\{p_1, p_2\},\{\Omega_i\} }=\mathbb R^n$.
\end{example}

Next, we construct an example which shows that the high dimension part of the limit set is not contained in the
blow-up set.

\begin{example}\label{example-high dime}
Let $(M,g)=(\mathbb R^n, g_E)$, $x'=(x_1, \cdots, x_k)$, and
$x''=(x_{k+1}, \cdots, x_n)$, for $(n-2)/2<k\le n-2$.
Set
$$\Gamma=\{x=(x',x'')\in\mathbb R^n;\,  x'\in \mathbb R^k, x''=0\}.$$
A solution of \eqref{eq-MainEq}-\eqref{eq-MainBoundary}
for $\Omega= \mathbb R^n \setminus\Gamma$ is given by
\begin{equation}\label{slution k}  \underline{u}^k=\Big[\Big(k-\frac{n-2}{2}\Big)\frac{2}{n}\Big]^{\frac{n-2}{4}}
|x''|^{-\frac{n-2}{2}}.  \end{equation}
Let $\{r_i\}$ be a positive decreasing sequence with $r_i\to 0$.
Set
$$\Omega_i=\{x\in\mathbb R^n;\, x'\in \mathbb R^k,\, |x''| >r_i\},$$
and
$$\Omega_o=\{x\in\mathbb R^n;\, x'\in \mathbb R^k,\, |x''| >1\}.$$
Then, $\Omega_i\rightarrow\mathbb R^n \setminus\Gamma$  as $i\rightarrow\infty$.
Denote by $u_i$ and $u_{o}$ the solution of \eqref{eq-MainEq}-\eqref{eq-MainBoundary}
for $\Omega=\Omega_i$ and $\Omega_{o}$, respectively.
Set $s=|x''|$. We have
$$u_o(x)=u_o(0,x'')=u(s),$$
and
\begin{equation}\label{u0_u_i}u_i(x)=r_i^{-\frac{n-2}{2}}u_o({x}/{r_i}).\end{equation}
By the maximum principle, we have
$$ \underline{u}^k(x)\leq u_o(x)\leq \big(\text{dist}(x,\partial\Omega_0)\big)^{-\frac{n-2}{2}}.$$
Hence, for $s>1$,
\begin{equation} \Big[\Big(k-\frac{n-2}{2}\Big)\frac{2}{n}\Big]^{\frac{n-2}{4}}
s^{-\frac{n-2}{2}}\leq u(s)\leq  (s-1)^{-\frac{n-2}{2}},  \end{equation}
Therefore,
we have, for $s>10$,
\begin{equation}\label{u lower upper bdd} C_1
s^{-\frac{n-2}{2}}\leq u_o(x)\leq  C_2s^{-\frac{n-2}{2}},  \end{equation}
where $C_1$ and $C_2$ are two positive constants.

By interior Schauder estimates, we have, for any $x\in\mathbb R^n$ with $s=|x''|>100$,
\begin{equation}\label{u0_estimate1}
s|\nabla u_o|_{L^{\infty}(B_{{s}/{8}}(x))}\leq C
\big(|u_o|_{L^{\infty}(B_{{s}/{4}}(x))}+s^2|u_o^{\frac{n+2}{n-2}}|_{L^{\infty}(B_{{s}/{4}}(x))}\big),\end{equation}
and
\begin{align}\label{u0_ estimate2}\begin{split}
s^2|\nabla^2 u_o|_{L^{\infty}(B_{{s}/{16}}(x))}&\leq
C\big(|u_o|_{L^{\infty}(B_{{s}/{8}}(x))}
+ s^2|u_o^{\frac{n+2}{n-2}}|_{L^{\infty}(B_{{s}/{8}}(x))}\\
&\quad\qquad+s^3|\nabla \big( u_o^{\frac{n+2}{n-2}}\big) |_{L^{\infty}(B_{{s}/{8}}(x))}\big).\end{split}\end{align}
Then, for any $x\in \mathbb R^n$ with $|x''|>100$,
\begin{equation}\label{u 0pointwise estimate}
|\nabla u_o(x)|\leq Cs^{-\frac{n-2}{2}-1}, \,\,\,|\nabla^2 u_o(x)|\leq Cs^{-\frac{n-2}{2}-2},
\end{equation}
where $C$ is a positive constant depending only on $n$ and $k$.
By \eqref{u lower upper bdd}, \eqref{u 0pointwise estimate}, and \eqref{Ricci-cur-in-v-Manifold},
we conclude that the Ricci curvature of $g_o=u_o^{\frac{4}{n-2}}g_{E}$
is uniformly bounded in $\{x\in \mathbb R^{n};\,|x''|> 100\}$.
We can also verify that the Ricci curvature of $g_o$ is uniformly bounded in
$\{x\in\mathbb R^n;\, 1<|x''|\leq 1+\delta \}$, for some $\delta>0$,
by the polyhomogeneous expansions of $u_o$ near $\partial \Omega_0$, and
is uniformly bounded in $\{x\in\mathbb R^n;\, 1+\delta<|x''|\leq 100 \}$ by straightforward estimates.
Hence, the Ricci curvature of $g_o=u_o^{\frac{4}{n-2}}g_{E}$ is uniformly bounded in $\Omega_o$.

By \eqref{u0_u_i}, the Ricci curvature of $g_i=u_i^{\frac{4}{n-2}}g_{E}$ is uniformly bounded
for $i$ sufficiently large. Therefore, $\mathcal B_{\Gamma,\{\Omega_i\}}=\emptyset.$
\end{example}

By the inverse map of stereographic projections, we can construct corresponding examples on $(S^n, g_{S^n})$.
In particular, let $p_1, p_2\in S^n$ be two distinct points. Then, we can construct
different sequences of increasing smooth domains $\Omega_i$ in $S^n\setminus\{p_1, p_2\}$,
which converges to $S^n \setminus \{p_1, p_2\}$ such that
the associated blow-up set
$\mathcal B_{\{p_1, p_2\},\{\Omega_i\} }=\{p_2\}$, or $\{p_1, p_2\}$, or the entire $S^n$.

\smallskip

To end this section, we consider the case that the limit set $\Gamma\subset S^n$ consists of a single point, say the north pole $p$.
We construct two examples of increasing smooth domains $\Omega_i$ in $S^n\setminus  \{p\}$,
which converges to $S^n \setminus  \{p\}$ such that the corresponding blow-up sets $\mathcal B_{ \{p\},\{\Omega_i\} }$ are not empty.
In one example,
$\mathcal B_{ \{p\},\{\Omega_i\} }= \{p\}$; while in another example, $-p\in \mathcal B_{ \{p\},\{\Omega_i\}}$.

\begin{example}\label{exa-single-point-limit-set-point}
Let $\{r_i\}$ be a positive decreasing sequence with $r_i\to 0$.
Set $\hat{r}_i=r_{i}^k$ for some $k>0$ sufficiently large, and
$$\widetilde{\Omega}_i=\mathbb R^n\setminus(\overline{ B_{r_i}(0)}\bigcup \overline{B_{\hat{r}_i}(e_1)}).$$
Let $\widetilde{g}_i$ be
the complete conformal metric in $\widetilde{\Omega}_i$
with the constant scalar curvature $-n(n-1)$.
By Example \ref{exa-blow up one point}, the Ricci curvature of $\widetilde{g}_i$ is
bounded in $\mathbb R^n\setminus B_2(0)$ and the maximum
Ricci curvature component of $\widetilde{g}_i$ diverges to infinity somewhere in $\overline{B_2(0)}$ as $i\rightarrow\infty$.
Similar to the argument in Example \ref{exa-blow up one point}, we can prove the Ricci curvature of $\widetilde{g}_i$ is uniformly bounded
in $\mathbb R^n\setminus B_2(0)$,
not only independent of $i$ but also independent of the point.

Set
$$\underline{\Omega}_i=\{x|k_ix\in\widetilde{\Omega}_i\}
=\mathbb R^n\setminus(\overline{ B_{\frac{r_i}{k_i}}(0)}\bigcup \overline{B_{\frac{\hat{r}_i}{k_i}}(\frac{e_1}{k_i})}),$$
with $k_i>0$.
Choose a sequence $k_i\rightarrow \infty$ sufficiently fast such that $\frac{2}{k_{i+1}}<\frac{r_i}{k_i}$.
Then, $\{\underline{\Omega}_i\}$ is a sequence of increasing smooth domains which converges to $\mathbb R^n\setminus \{0\}$ and
$\mathcal B_{\{0\},\{\underline{\Omega}_i\} }=\{0\}.$
Let $I$ be the inversion transform $\frac{x}{|x|^2} $ which maps $\{\underline{\Omega}_i\}$ to a bounded domain
and maps the infinity to the origin. Set $\widehat{\Omega}_i=I(\underline{\Omega}_i)$.
Then, $\widehat{\Omega}_i$ is a sequence of increasing smooth domains which converges to $\mathbb R^n$.

Let $T$ be the stereographic projection which maps the north pole $p$ to infinity and maps the south pole $-p$ to the origin.
Set $\Omega_i=T^{-1}(\widehat{\Omega}_i)$. Then, $\Omega_i$ converges to $ S^n\setminus \{p\}$,
and the blow-up set $\mathcal{B}_{\Gamma,\{\Omega_i\}}$ consists of $p$.
\end{example}

\begin{example}\label{exa-single-point-limit-set-curves}
For $n\ge 4$, set
$$\gamma=\{(0, \cdots, 0, x_n)|\, |x_n|\ge 1\}\subset\mathbb R^n.$$
Let $\widetilde{\Omega}_i$ be a sequence of increasing bounded smooth domains in $\mathbb R^n$,
star-shaped with respect to the origin, which converges to
$\mathbb R^n\setminus \gamma$. Let $\widetilde{g}_i$ be
the complete conformal metric in $\widetilde{\Omega}_i$
with the constant scalar curvature $-n(n-1)$. By Example 5.6 in \cite{HanShen3},
the maximum
Ricci curvature component of $g_i$ diverges to infinity somewhere as $i\rightarrow\infty$.
Assume the maximum of a component of Ricci curvature of $g_i$ is achieved at $x_i\in\Omega_i$.
Set
$$\widehat{\Omega}_i=\{x|x_i+k_ix\in\widetilde{\Omega}_i\},$$
with $k_i>0$.
 Since $\{\widetilde{\Omega}_i\}$ is a sequence of increasing star-shaped bounded smooth domains,
we can choose a sequence $k_i\rightarrow0$ such that $\{\widehat{\Omega}_i\}$
is a sequence of increasing bounded smooth domains  which converges to $\mathbb R^n$.
Then, the maximum Ricci curvature component of
the complete conformal metric in $\widehat{\Omega}_i$ with the constant scalar curvature $-n(n-1)$
diverges to infinity at $0$ as $i\rightarrow\infty$.

Let $T$ be the stereographic projection which maps the north pole $p$ to infinity and maps the south pole $-p$ to the origin.
Set $\Omega_i=T^{-1}(\widehat{\Omega}_i)$. Then, $\Omega_i$ converges to $ S^n\setminus \{p\}$,
 and the south pole $-p$ is in the blow-up set $\mathcal{B}_{\Gamma,\{\Omega_i\}}$. %,
\end{example}


\begin{thebibliography}{DG}

\bibitem{ACF1982CMP} L. Andersson, P. Chru\'sciel, H. Friedrich,
\emph{On the regularity of solutions to the Yamabe equation and the existence of
smooth hyperboloidal initial data for Einstein field equations},
Comm. Math. Phys., 149(1992), 587-612.


\bibitem{A1982CPDE} P. Aviles,
\emph{A study of the singularities of solutions of a class of nonlinear elliptic partial equations},
Comm. P. D. E.,  7(1982), 609-643.

\bibitem{AM1988JDG} P. Aviles, R. C. McOwen,
\emph{Conformal deformation to constant negative scalar curvature on
noncompact Riemannian manifolds},
J. Diff. Geom,  27(1988), 225-239.


\bibitem{AM1988DUKE} P. Aviles, R. C. McOwen,
\emph{Complete conformal metrics with negative scalar curvature in compact Riemannian manifolds},
Duke Math. J., 56(1988), 395-398.





\bibitem{CMY2022}S-Y. A. Chang, S. E. Mckeown, P. Yang, \emph{Scattering on singular Yamabe metrics}, Rev. Mat. Iberoam. 38 (2022), no. 7, 2153-2184.






\bibitem{CheegerColding1997} J. Cheeger, T. H. Colding,
\emph{On the structure of spaces with Ricci curvature bounded below. I}, J. Differential Geom. 46 (1997), no. 3, 406-480.


\bibitem{CheegerColdingTian2002} J. Cheeger, T. H. Colding, G. Tian,
\emph{On the singularities of spaces with bounded Ricci curvature}, Geom. Funct. Anal., 12(2002),  873-914.


\bibitem{ChenLaiWang2019} X. Chen, M. Lai, F. Wang,
\emph{Escobar-Yamabe compactifications for Poincar\'e-Einstein manifolds and rigidity theorems},
Adv. Math., 343(2019), 16-35.




\bibitem{ChodoshLi2020}O. Chodosh, C. Li,
\emph{Generalized soap bubbles and the topology of manifolds with positive scalar curvature},
arxiv:2008.11888, Ann. of Math., to appear.


\bibitem{Graham2017}C. R. Graham, \emph{Volume renormalization for singular Yamabe metrics},
Proc. Amer. Math. Soc., 145 (2017), 1781-1792.

\bibitem{GurskyHan2017}M. Gursky, Q. Han, \emph{Non-existence of Poincar\'e-Einstein  manifolds with prescribed conformal infinity}, Geom. Funct. Anal., 27(2017), 863-879.

\bibitem{GurskyHan2021}M. Gursky, Q. Han, S. Stolz,
\textit{An invariant related to the existence of conformally compact Einstein fillings},  Trans. Amer. Math. Soc.,
374(2021), 4185-4205.

\bibitem{M.Gursky1}  M. Gursky, J. Streets, M. Warren,
\emph{Existence of complete conformal metrics of negative Ricci
curvature on manifolds with boundary}, Cal. Var. \& P. D. E., 41(2011), 21-43.


\bibitem{hanshen2}
 Q. Han, W. Shen,
\emph{The Loewner-Nirenberg problem in singular domains,}
J. Funct. Anal., 279 (2020),  108604.

\bibitem{HanShen3} Q. Han, W. Shen,
\emph{On the negativity of Ricci curvatures of complete conformal metrics},
Peking Math. J, 4(2021), 83-117.


\bibitem{KMS2009} M. Khuri, F. Marques,  R. Schoen,
\emph{A compactness theorem for the Yamabe problem},
J. Diff. Geom., 81(2009), 143-196.

\bibitem{LeeParker1987} J. Lee, T. Parker,
\emph{\it The Yamabe problem},
Bull. Amer. Math. Soc. (N.S.),  17(1987), %no. 1,
37-91.




\bibitem{LesourdUngerYau2021}
M. Lesourd, R. Unger, S-T. Yau, \emph{\it The positive mass theorem with arbitrary ends}, J. Differential Geom., to appear.



\bibitem{Li2022}G. Li,
\emph{\it  Two flow approaches to the Loewner-Nirenberg problem on manifolds},
J. Geom. Anal. 32 (2022), no. 1, Paper No. 7, 30 pp.


\bibitem{LLX2023}Y. Li, L. Nguyen, J. Xiong,
\emph{\it Regularity of viscosity solutions of the $\sigma_k$-Loewner-Nirenberg problem},
Proc. Lond. Math. Soc. (3) 127 (2023), no. 1, 1-34.

\bibitem{LiZhu1999}Y. Li, M. Zhu,
\emph{\it Yamabe type equations on three-dimensional Riemannian manifolds},
Comm. Contemp. Math., 1(1999), %no. 1,
1-50.

\bibitem{Loewner&Nirenberg1974} C. Loewner, L. Nirenberg,
\emph{Partial differential equations invariant under conformal or projective
transformations},
Contributions to Analysis, 245-272, Academic Press, New York, 1974.


\bibitem{Lohkamp4} J. Lohkamp,
\emph{The higher dimensional positive mass theorem II},
arXiv:1612.07505.



\bibitem{Mazzeo1991} R. Mazzeo,
\emph{Regularity for the singular Yamabe problem}, Indiana Univ. Math. Journal, 40(1991), 1277-1299.


\bibitem{Petersen2006}P. Petersen,
\emph{Riemannian geometry. Second edition. Graduate Texts in Mathematics, 171}, Springer, New York, 2006.


\bibitem{SchoenYau1994}
R. Schoen, S.-T. Yau, \emph{Lectures on Differential Geometry},
International Press, Cambridge, MA, 1994.

\bibitem{SchoenYau1979} R. Schoen, S.-T. Yau, \emph{On the proof of the positive mass conjecture in
general relativity}, Comm. Math. Phys., 65(1979),  45-76.


\bibitem{SchoenYau1988}
R. Schoen, S.-T. Yau, \emph{Conformally
flat manifolds, Kleinian groups and scalar curvature}, Invent.
Math., 92(1988), 47-71.


\bibitem{SchoenYau2017} R. Schoen, S.-T. Yau, \emph{Positive scalar curvature and minimal
hypersurface singularities}, arXiv:1704.05490.

\bibitem{ShenWang2021}W. Shen, Y. Wang, \emph{Rigidity and gap theorem for Liouville's equation}, J. Funct. Anal. 281 (2021), no. 10, Paper No. 109228, 29 pp.

\bibitem{Witten1981} E. Witten,  \emph{A new proof of the positive energy theorem}, Comm. Math. Phys.,
80(1981),  381-402.

\end{thebibliography}
\end{document}